\newtheorem{thm}{Theorem}[section]
\newtheorem{conj}[thm]{Conjecture}
\newtheorem{definition}{Definition}[section]
\newtheorem{prop}[thm]{Proposition}
\newtheorem{lemma}[thm]{Lemma}
\theoremstyle{plain}
\newtheorem{rem}[thm]{Remark}
\newcommand{\R}{\mathbb{R}}
\newcommand{\N}{\mathbb{N}}
\renewcommand{\P}{\mathbb{P}}
\newcommand{\E}{\mathbb{E}}
\newcommand{\1}{\mathbbm{1}}
\newenvironment{enum}{\begin{list}{(\roman{enumi})}{\usecounter{enumi}}}{\end{list}}
\newcommand{\Sn}{\mathcal S_n}
\newcommand{\An}{\mathcal A_n}
\newcommand{\X}{\mathfrak{X}}
\newcommand{\nb}[1]{{\color{red} NB comment: #1} }
\DeclareMathOperator{\var}{Var}
\DeclareMathOperator{\id}{id}
\DeclareMathOperator{\tmix}{\textit{t}_{mix}}
\def\cS{\mathcal{S}}
\def\cM{\mathcal{M}}
\def\cK{\mathcal{K}}
\def\cH{\mathcal{H}}
\def\cG{\mathcal{G}}
\def\cF{\mathcal{F}}
\def\cC{\mathcal{C}}
\def\cB{\mathcal{B}}
\def\tsum{\textstyle \sum}
\def\tprod{\textstyle \prod}
\def\eps{\varepsilon}
\begin{document}

\title{Cutoff for conjugacy-invariant  \\
random walks on the permutation group
}

\author{Nathana\"el Berestycki\footnote{University of Cambridge, Statistical Laboratory. Research supported in part by EPSRC grants EP/GO55068/1,  EP/I03372X/1 and EP/L018896/1.} \and Bat\i{} \c{S}eng\"ul\footnote{Research was carried out in part at the University of Cambridge by EPSRC grant EP/H023348/1 and at the University of Bath by EPSRC grant EP/L002442/1.}}
\date{\today}

\maketitle

\begin{abstract}
We prove a conjecture raised by the work of Diaconis and Shahshahani (1981) about the mixing time of random walks on the permutation group induced by a given conjugacy class. To do this we exploit a connection with coalescence and fragmentation processes and control the Kantorovitch distance by using a variant of a coupling due to Oded Schramm as well as contractivity of the distance.
Recasting our proof in the language of Ricci curvature, our proof establishes the occurrence of a phase transition, which takes the following form in the case of random transpositions: at time $cn/2$,  the curvature is asymptotically zero for $c\le 1$ and is strictly positive for $c>1$.
\end{abstract}

%\newpage
\tableofcontents

\section{Introduction}

\subsection{Main results}
Let $\Sn$ denote the multiplicative group of permutations of $\{1,\dots,n\}$. Let $\Gamma \subset \Sn$ be a fixed conjugacy class in $\Sn$, i.e., $\Gamma = \{g \gamma g^{-1} : g \in \Sn\}$ for some fixed permutation $\gamma \in \Sn$. Alternatively, $\Gamma$ is the set of permutation in $\Sn$ having the same cycle structure as $\gamma$.
Let $X^\sigma=(X_0, X_1, \ldots )$  be discrete-time random walk on $\Sn$ induced by $\Gamma$, started in the permutation $\sigma \in \Sn$, and let $Y^\sigma$ be the associated continuous time random walk. These are the processes defined by
\begin{equation}
  \label{eq:rw_conj_defn}
  \begin{array}{lll}
    X^\sigma_t  &=\sigma \circ \gamma_1 \circ \dots \circ \gamma_t ;& \ \ \  t = 0, 1, \ldots\\
    Y_t^\sigma  &= X^\sigma_{N_t} ; & \ \ \ t \in [0, \infty)
    \end{array}
  \end{equation}
  where $\gamma_1,\gamma_2,\dots$ are i.i.d. random variables which are distributed uniformly in $\Gamma$; and $(N_t, t \ge 0)$ is an independent Poisson process with rate 1.
  Then $Y$ is a Markov chain which converges to an invariant measure $\mu$ as $t \to \infty$. If $\Gamma \subset \An$ (where $\An$ denotes the alternating group) then $\mu$ is uniformly distributed on $\An$ and otherwise $\mu$ is uniformly distributed on $\Sn$. The simplest and most well known example of a conjugacy class is the set $T$ of all transpositions, or more generally of all cyclic permutations of length $k\ge 2$. This set will play an important role in the rest of the paper. Note that $\Gamma$ depends on $n$ but we do not indicate this dependence in our notation.

  \medskip The main goal of this paper is to study the cut-off phenomenon for the random walk $X$. More precisely, recall that the total variation distance $\|X-Y\|_{TV}$ between two random variables $X$, $Y$ taking values in a set $S$ is given by
  \begin{equation}
    \|X-Y\|_{TV} = \sup_{A\subset S} |\P(X\in A)-\P(Y\in A)|.
  \end{equation}
  For $0< \delta <1$,  the mixing time $\tmix(\delta)$ is by definition given by
  \[
    \tmix(\delta) = \inf\{t \geq 0: d_{TV}(t) \le \delta \}
  \]
  where
  \begin{equation}
    d_{TV}(t) = \sup_{\sigma} \| Y^\sigma_t-\mu \|_{TV}
  \end{equation}
  and $\mu$ is the invariant measure defined above.

  In the case where $\Gamma= T$ is the set of transpositions, a famous result of Diaconis and Shahshahani~\cite{diaconis_mixing} is that the cut-off phenomenon takes place at time $(1/2) n \log n$ asymptotically as $n\to \infty$. That is, $\tmix(\delta)$ is asymptotic to $(1/2) n \log n$ for any fixed value of $0< \delta <1$. It has long been conjectured that for a general conjugacy class such that $|\Gamma| = o(n)$ (where here and in the rest of the paper, $|\Gamma|$ denotes the number of non fixed points of any permutation $\gamma \in \Gamma$), a similar result should hold at a time $(1/|\Gamma|) n \log n$. This has been verified for $k$-cycles with a fixed $k\ge 2$ by Berestycki, Schramm and Zeitouni~\cite{berestycki_k}. This is a problem with a substantial history which will be detailed below. The primary purpose of this paper is to verify this conjecture. Hence our main result is as follows.

  \begin{thm}\label{thm:mix}
    Let $\Gamma\subset \Sn$ be a conjugacy class and suppose that $|\Gamma|=o(n)$. Define
    \begin{equation}\label{tmix}
      \tmix:=\frac{1}{|\Gamma|}n \log n.
    \end{equation}
    Then for any $\epsilon>0$,
    \begin{equation}\label{cutoff}
      \lim_{n \rightarrow \infty}d_{TV}((1-\epsilon)\tmix) = 1 \quad \text{ and } \quad \lim_{n \rightarrow \infty}d_{TV}((1+\epsilon)\tmix) =0.
    \end{equation}
  \end{thm}

  \medskip The first limit of \eqref{cutoff} is proved in Appendix \ref{S:lb}. The rest of the paper focuses on the second limit. Our main tool for this result is the notion of discrete Ricci curvature as introduced by Ollivier~\cite{ollivier_ricci}, for which we obtain results of independent interest. We briefly discuss this notion here; however we point out that this turns out to be equivalent to the more well-known \emph{path coupling method} and transportation metric introduced by Bubley and Dyer~\cite{bubley_path} and Jerrum~\cite{jerrum_simple} (see for instance Chapter 14 of the book \cite{peres_mixing_book} for an overview). However we will cast our results in the language of Ricci curvature because we find it more intuitive.
  Recall first that the definition of the $L^1$-Kantorovitch distance (sometimes also called Wasserstein or transportation metric) between two random variables $X,Y$ taking values in a metric space $(S,d)$ is given by
  \begin{equation}\label{L1W}
    W_1(X,Y):= \inf \E[d(\hat X,\hat Y)]
  \end{equation}
  where the infimum is taken over all couplings $(\hat X,\hat Y)$ which are distributed marginally as $X$ and $Y$ respectively. Ollivier's definition of Ricci curvature of a Markov chain $(X_t, t \ge 0)$ on a metric space $(S,d)$ is as follows:

  \begin{definition}
    Let $t>0$. The curvature between two points $x, x'\in S$ with $x \neq x'$ is given by
    \begin{equation}\label{D:Ricci}
      \kappa_t (x, x') : = 1 - \frac{W_1 (X_t^x, X_t^{x'})}{d(x,x')}
    \end{equation}
    where $X_t^x$ and $X_t^{x'}$ denote Markov chains started from $x$ and $x'$ respectively. The curvature of $X$ is by definition equal to
    $$
    \kappa_t : = \inf_{x \neq x'} \kappa_t(x,x').
    $$
  \end{definition}

  In the terminology of Ollivier~\cite{ollivier_ricci}, this is in fact the curvature of the discrete-time random walk whose transition kernel is given by $m_x(\cdot) = \P(X_t = \cdot | X_0 = x)$. We refer the reader to \cite{ollivier_ricci} for an account of the elegant theory which can be developed using this notion of curvature, and point out that a number of classical properties of curvature generalise to this discrete setup.

  For our results it will turn out to be convenient to view the symmetric group
  as a metric space equipped with the metric $d$ which is the word metric induced by the set $T$ of transpositions (we will do so even when the random walk is not induced by $T$ but by a general conjugacy class $\Gamma$). That is, the distance $d(\sigma, \sigma')$ between $\sigma, \sigma' \in \Sn$ is the minimal number of transpositions one must apply to get from one element to the other (one can check that this number is independent of whether right-multiplications or left-multiplications are used).

  For simplicity we focus in this introduction on the case where the random walk is induced by the set of transpositions $T$. (A more general result will be stated later on the paper). For $c>0$ and $\sigma \neq \sigma'$, let
  \begin{equation}\label{eq:curvature_def}
    \kappa_c(\sigma,\sigma') = 1- \frac{W_1(X^{\sigma}_{\lfloor cn/2 \rfloor /},X^{\sigma'}_{\lfloor cn/2 \rfloor })}{d(\sigma,\sigma')}
  \end{equation}
  and define $\kappa_c(\sigma,\sigma)=1$. That is, $\kappa_c(\sigma, \sigma') = \kappa_{\lfloor cn/2\rfloor }(\sigma, \sigma')$ with our notation from \eqref{D:Ricci}. In particular, $\kappa_c$ depends on $n$ but this dependency does not appear explicitly in the notation.
  It is not hard to see that $\kappa_c(\sigma, \sigma') \ge 0$ (apply the same transpositions to both walks $X^\sigma$ and $X^{\sigma'}$).
  For parity reasons it is obvious that that $\kappa_c(\sigma, \sigma')=0$ if $\sigma$ and $\sigma'$ do not have the same signature. Thus we only consider the curvature between elements of even distance. For $c>0$ define
  $$
  \kappa_c = \inf \kappa_c(\sigma, \sigma'),
  $$
  where the infimum is taken over all $\sigma,\sigma'\in \Sn$ such that $d(\sigma,\sigma')$ is even.
  Our main result states that  $\kappa_c$ experiences a phase transition at $c=1$. More precisely, the curvature $\kappa_c$ is asymptotically zero for $c \le 1$ but for $c>1$ the curvature is strictly positive asymptotically. In order to state our result, we introduce the quantity $\theta(c)$, which is the largest solution in $[0,1]$ to the equation
  \begin{equation}\label{eq:theta}
    \theta(c)=
    1-e^{-c \theta(c)}.
  \end{equation}
  It is easy to see that $\theta(c) = 0$ for $c\le1$ and $\theta(c) >0$ for $c>1$. In fact, $\theta(c)$ is nothing else but the survival probability of a Galton-Watson tree with Poisson offspring distribution with mean $c$.

  \begin{thm}\label{thm:curv}
%    If $c\le 1$,
%    \begin{equation}\label{E:subcrit}
%      \lim_{n \to \infty} \kappa_c = 0
%    \end{equation}
%    On the other hand, for $c>1$
%    \begin{equation}\label{eq:thm_lower}
%      \liminf_{n \rightarrow \infty} \kappa_c  \geq \theta(c)^4
%    \end{equation}
%    and
For any $c>0$, we have:
    \begin{equation}\label{eq:thm_upper}
\theta(c)^4 \le     \liminf_{n \rightarrow \infty} \kappa_c \le \limsup_{n \rightarrow \infty} \kappa_c  \leq \theta(c)^2
    \end{equation}
    In particular, $\lim_{n \to \infty} \kappa_c = 0$ if and only if $c \le 1$, while $\liminf_{n \to \infty} \kappa_c >0$ otherwise.
  \end{thm}

  A more general version of this theorem will be presented later on, which gives results for the curvature of a random walk induced by a general conjugacy class $\Gamma$. This will be stated as Theorem \ref{thm:curv2}.

  We believe that the upper bound is the sharp one here, and thus make the following conjecture.
  \begin{conj} For $c>0$,
    $$
    \lim_{n \to \infty} \kappa_c = \theta(c)^2.
    $$
  \end{conj}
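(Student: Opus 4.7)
The case $c \le 1$ follows immediately from \eqref{E:subcrit} since $\theta(c)=0$ on that range. For $c>1$ the upper bound $\limsup_{n\to\infty} \kappa_c \le \theta(c)^2$ is already provided by \eqref{eq:thm_upper}. Thus the conjecture reduces to upgrading the lower bound \eqref{eq:thm_lower} from $\theta(c)^4$ to $\theta(c)^2$, and the plan below focuses on this single step.

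By a path-coupling argument one may reduce to permutations $\sigma$ and $\sigma' = \sigma(a,b)$ with $d(\sigma,\sigma')=2$; the goal becomes to produce a coupling of $(X_t^\sigma, X_t^{\sigma'})$ at $t=\lfloor cn/2\rfloor$ with
\[
\E\bigl[d(\hat X_t^\sigma, \hat X_t^{\sigma'})\bigr] \le 2\bigl(1-\theta(c)^2\bigr)+o(1).
\]
Inspection of the argument behind \eqref{eq:thm_lower} should reveal that the squared factor $\theta(c)^4 = (\theta(c)^2)^2$ arises from requiring two essentially independent Schramm-style coalescence events to succeed, one for each endpoint of the initial transposition $(a,b)$. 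The refined coupling should entangle these two events into a single giant-cycle event. Concretely, the plan is: (i) run the two walks with the same transpositions up to time $(1-\delta)t$, so that their cycle structures, and in particular their giant cycles, agree as sets of vertices except for $o(n)$ labels; (ii) during the final $\delta t$ steps, conditionally on the event $\{a,b \in \cG\}$ that both endpoints lie in the (common) giant cycle $\cG$ --- which has asymptotic probability $\theta(c)^2$ by Schramm's theorem --- perform a single carefully ``re-wired'' transposition in $X_t^{\sigma'}$ that exploits the approximate uniformity of the cyclic order within $\cG$ to cancel the initial $(a,b)$ discrepancy; (iii) on the complementary event, fall back on the trivial same-transposition coupling, under which the distance stays at $2$.

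The main obstacle lies in step (ii): the re-wiring must simultaneously preserve the marginal law of $X_t^{\sigma'}$ as a uniform random transposition walk \emph{and} produce an exact cancellation of the discrepancy with probability $1-o(1)$ conditional on $\{a,b \in \cG\}$. This amounts to proving a quantitative ``uniform random cyclic order'' statement for the giant cycle that is strictly stronger than the qualitative Poisson--Dirichlet-type description underlying \eqref{eq:thm_lower}; heuristically, one needs that the positions of $a$ and $b$ inside the giant cycle, conditionally on its vertex set, are close in total variation to a pair of uniformly chosen distinct points, with error going to zero fast enough to absorb the remaining $O(\delta)$ contributions coming from Phase (i). Once such a sharpened local limit for the giant cycle is in hand, combining it with the coalescence/fragmentation framework developed earlier in the paper should close the gap between $\theta(c)^4$ and $\theta(c)^2$ and establish the conjecture.
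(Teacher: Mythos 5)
The statement you are trying to prove is not a theorem of the paper at all: it is stated there as an open conjecture, and the paper itself only establishes the three bounds \eqref{E:subcrit}, \eqref{eq:thm_lower} and \eqref{eq:thm_upper}, i.e.\ $\kappa_c\to 0$ for $c\le 1$, $\liminf_n \kappa_c\ge\theta(c)^4$ and $\limsup_n\kappa_c\le\theta(c)^2$ for $c>1$. So there is no proof in the paper to compare against, and your proposal does not close the gap either: it is a programme, not a proof. The entire burden of the argument is concentrated in your step (ii) --- the ``re-wired'' transposition that must simultaneously preserve the marginal law of the second walk and cancel the initial discrepancy with conditional probability $1-o(1)$ on the event that the relevant points lie in the giant structure --- and you explicitly acknowledge that the quantitative ``uniform cyclic order inside the giant cycle'' statement needed for this is not established. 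Until that coupling is actually constructed and its marginal-preservation and cancellation properties proved, nothing beyond the paper's $\theta(c)^4$ lower bound has been obtained.

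There is also a concrete structural error in your reduction. You propose to reduce, by path coupling, to pairs $\sigma$, $\sigma'=\sigma(a,b)$ and assert $d(\sigma,\sigma')=2$; in fact $d(\sigma,\sigma(a,b))=1$, and such pairs have opposite signatures, so they are excluded from the infimum defining $\kappa_c$ (the paper takes the infimum only over pairs at even distance, precisely because the curvature is $0$ across parity classes). The correct elementary reduction, used in the paper's proof of \eqref{eq:thm_lower}, is to $\sigma'=\sigma\circ\tau_1\circ\tau_2$ with two transpositions, hence four marked points; requiring all four to fall in the giant component is exactly what produces the exponent $4$ in $\theta(c)^4$. Your heuristic that the answer should be $\theta(c)^2$ because ``both endpoints of the initial transposition lie in the giant cycle with probability $\theta(c)^2$'' is therefore based on a configuration (a single transposition discrepancy) that cannot occur among the pairs over which $\kappa_c$ is defined. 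To make the conjecture plausible along these lines you would need to explain how a distance-$2$ discrepancy (four points) can be repaired at cost $o(1)$ on an event of probability $\theta(c)^2+o(1)$ rather than $\theta(c)^4+o(1)$ --- for instance by showing that the two transpositions can be cancelled sequentially without paying independently for each --- and that is exactly the part that is missing.
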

  Of course the conjecture is already established for $c\le 1$ and so is only interesting for $c>1$.

\subsection{Relation to previous works on the geometry of random transpositions}
The transition described by Theorem \ref{thm:curv} says that the discrete Ricci curvature increases abruptly (asymptotically) from zero to a positive quantity as $c$ increases
past the critical value $c=1$, and so as we consider longer portions of the random walk.
It is related to a result proved by the first author in \cite{Berestycki_hyper}. There it was shown that the triangle formed by the identity and two independent samples $X_t$ and $X'_t$ from the random walk run for time $t=cn/4$, is thin (in the sense of Gromov hyperbolicity) if and only if $c<1$. Note that by reversibility, the path running from $X_t$ to $X'_t$ (via the identity) is a random walk run for time $cn/2$.
 In other words, the result from \cite{Berestycki_hyper} implies that the permutation group appears Gromov hyperbolic from the point of view of a random walker so long as it takes fewer than $cn/2$ steps with $c< 1$.

 Hence, in both Theorem \ref{thm:curv} and \cite{Berestycki_hyper}, there is a change of geometry (as perceived by a random walker) from low to high curvature
 after running for exactly $t = cn/2$ steps with $c=1$.
 At this point, we do not know of a formal way to relate these two observations, so they simply seem analogous. In a private conversation with the first author in 2005, Gromov had suggested that the hyperbolicity transition of \cite{Berestycki_hyper} could be translated more canonically into the language of Ricci curvature and was an effect of the global positive curvature of $\cS_n$ rather than a breakdown in hyperbolicity. In a sense, Theorem \ref{thm:curv} can be seen as a formalisation and justification of his prediction.

  \subsection{Relation to previous works on mixing times}

  Mixing times of Markov chains were initiated independently by Aldous~\cite{aldous_rapid_mixing} and by Diaconis and Shahshahani~\cite{diaconis_mixing}. In particular, as already mentioned, Diaconis and Shahshahani proved Theorem \ref{thm:mix} in the case where $\Gamma$ is the set $T$ of transpositions. Their proof relies on some deep connections with the representation theory of $\Sn$ and bounds on so-called character ratios.
  The conjecture about the general case appears to have first been made formally in print by Roichman~\cite{roichman_upper} but it has no doubt been asked privately before then. We shall see that that the lower bound $\tmix(\delta) \geq (1+ o(1))(1/|\Gamma|)n \log n$ is fairly straightforward (it is carried out in Appendix \ref{S:lb} and is as usual based on a coupon-collector type argument); the difficult part is the corresponding upper bound.

  Flatto, Odlyzko and Wales~\cite{flatto_group} built on the earlier work of Vershik and Kerov~\cite{vershik_aymptotic} to obtain that $\tmix(\delta) \leq (1/2 + o(1))n \log n$ when $|\Gamma|$ is bounded (as is noted in \cite[p.44-45]{diaconis_group}).  This was done using character ratios and this method was extended further by Roichman~\cite{roichman_upper,roichman_characters} to show an upper bound on $\tmix(\delta)$ which is sharp up to a constant when $|\Gamma | = o(n)$ (and in fact, more generally when $|\Gamma|$ is allowed to grow to infinity as fast as $(1-\delta)n$ for any $\delta \in (0,1)$). Again using character ratios Lulov and Pak~\cite{lulov_rapid} showed the cut-off phenomenon as well as $\tmix = (1/|\Gamma|)n \log n$ in the case when $|\Gamma| \geq n/2$. Roussel~\cite{roussel_phd, roussel_cutoff} obtains the correct value of the mixing time and establishes the cut-off phenomenon for the case $|\Gamma|\leq 6$.

  Finally, let us discuss two more recent papers to which this work is most closely related to.  Berestycki, Schramm and Zeitouni \cite{berestycki_k}, show using coupling arguments and a connection to coalescence-fragmentation processes that the cutoff phenomenon occurs at $\tmix = (1/k)n \log n$ in the case when $\Gamma$ consists only of cycles of length $k$ for any $k\geq 2$ fixed.

  Shortly after, Bormashenko \cite{bormashenko} devised a path coupling argument for the coagulation-fragmentation process associated to random transpositions to obtain a new proof of a slightly weaker version of the Diaconis--Shahshahani result: her argument implies that the mixing time of random transpositions is $O( n \log n)$ (unfortunately the implicit multiplicative constant is not sharp, so this is not sufficient to obtain cutoff). See also \cite{paulin} for another discussion of her results together with a reformulation in the language of coarse Ricci curvature. In a way her approach is very similar to ours, to the point that it can be considered a precursor to our work, since our method is also based on a certain path coupling for the coagulation-fragmentation process which exploits certain remarkable properties of Schramm's coupling \cite{schramm_transpo, berestycki_k}.

 \paragraph{Comparison with \cite{berestycki_k}.}The authors in Berestycki, Schramm and Zeitouni~\cite{berestycki_k} remark that their proof can be extended to cover the case when $\Gamma$ is a fixed conjugacy class and indicate that their methods can probably be pushed to cover the case when $|\Gamma|=o(n^{1/2})$, but it is clear that new ideas are needed if $|\Gamma|$ is larger. Indeed, their argument uses very delicate estimates about the behaviour of small cycles, together with a variant of a coupling due to Schramm~\cite{schramm_transpo} to deal with large cycles. The most technical part of their argument is to analyse the distribution of small cycles, using delicate couplings and carefully bounding the error made in these couplings.

  \medskip However, when $k = |\Gamma|$ is larger than $n^{1/2}$, we can no longer think of the points in the conjugacy class as being sampled independently (with replacement) from $\{1, \ldots, n\}$, by the birthday problem. This introduces many more ways in which errors in the above coupling arguments could occur. These seem quite hard to control, and hence new ideas are required for the general case.

  \medskip The proof in this paper relies on similar observations as \cite{berestycki_k}, and in particular the connection with coalescence-fragmentation process as well as Schramm's coupling argument play a crucial role. The key new idea however, is to try to prove mixing not just in the total variation sense but in the stronger sense of the $L^1$-Kantorovitch distance (Ricci curvature) and to estimate it at a time well before the mixing time, roughly $O(n/k)$ instead of $O(n(\log n) / k)$. This may seem counterintuitive initially, however studying the random walk at this time scale allows us to make precise comparisons between the random walk and an associated random graph process. It turns out the random graph at these time scales can be described rather precisely. Furthermore, due to the contraction properties of the Kantorovich distance, somehow (and rather miraculously, we find), the estimate we obtain can be bootstrapped with sufficient precision to yield mixing exactly at the time $\tmix = (1/k) n \log n$.

  \medskip In particular, since the heart of the proof consists in studying the situation at a time well before mixing, and purely to take advantage of the giant component at such times, we \emph{never} have to study the distribution of small cycles. This is really quite surprising, given that the small cycles (in particular, the fixed points) are responsible for the occurrence of the cutoff at time $\tmix$.

\subsection{Organisation of the paper.} We stress that compared to \cite{berestycki_k}, the main arguments are quite elementary. The heart of the proof is contained in Sections \ref{sec:lower_bound} and \ref{Sec:mixing}. Readers who are familiar with \cite{berestycki_k} are encouraged to concentrate on these two short sections.

The paper is organised as follows. In Section \ref{Sec:mixing} we state and discuss Theorem \ref{thm:curv2}, which is a general curvature theorem (of which Theorem \ref{thm:curv} is the prototype). We also discuss why this implies the main theorem (Theorem \ref{thm:mix}). In Section \ref{SS:hyper} we study the associated random hypergraph process. The main result in that section is Theorem \ref{lemma:hyper_k_cycle}, which proves the existence and uniqueness of the giant component. Curiously this is the most technical aspect of the paper, and really the only place where the myriad of  ways in which the conjugacy class $\Gamma$ might be really big plays a role and needs to be controlled. Section \ref{sec:curv} contains a proof of the main curvature theorem (Theorem \ref{thm:curv2}), starting with the easy upper bound on curvature (Section \ref{sec:upper_bound}) and following up with the slightly more complex lower bound (Section \ref{sec:lower_bound}), which really is the heart of the proof. The two appendices contain respectively a proof of the lower bound on the mixing time (certainly known in the folklore, essentially a version of the coupon collector lemma); and an adaptation of Schramm's argument \cite{schramm_transpo} for the Poisson--Dirichlet structure of cycles inside the giant component, which is needed in the proof.

\medskip\noindent  \textbf{Acknowledgements.} We thank Yuval Peres and Spencer Hughes for useful discussions on discrete Ricci curvature.

  \section{Curvature and mixing}\label{Sec:mixing}

  \subsection{Curvature theorem}

As discussed above, the lower bound \eqref{cutoff} is relatively easy and is probably known in the folklore; we give a proof in Appendix \ref{S:lb}. We now start the proof of the main results of this paper, which is the upper bound (the right hand side) of \eqref{cutoff}. In this section, we first state the more general version of Theorem \ref{thm:curv} discussed in the introduction, and we will then show how this implies the desired result for the upper bound on $\tmix(\delta)$. To begin, we define the cycle structure $(k_2,k_3, \dots)$ of $\Gamma$ to be a vector such that for each $j \geq 2$, there are $k_j$ cycles of length $j$ in the cycle decomposition of any $\gamma \in \Gamma$ (note that this does not depend on $\tau\in \Gamma$). Then $k_j=0$ for all $j>n$ and we have that $k :=|\Gamma| = \sum_{j=2}^\infty j k_j$.

  In the case for the transposition random walk the quantity $\theta(c)$ which appears in the bounds is the survival probability of a Galton-Watson process with offspring distribution given by a Poisson random variable with mean $c$. Our first task is to generalise $\theta(c)$. We do so via a fixed point equation, which is more complex here.
   %(and we point out that the interpretation in terms of survival probability of a certain Galton-Watson process does not hold in general).
   Define
  $$
  \alpha_j = \frac{j k_j}{k} ,
   $$
   and note that $\alpha_j \in [0,1]$ ($\alpha_j$ is the proportion of the mass in cycles of size $j$ for any $\gamma \in \Gamma$). Thus $(\alpha_j)_{j \geq 2}$ is compact in the product topology (the topology of pointwise convergence).
  Suppose that the limit
  \begin{equation}\label{eq:k'}\tag{C}
    (\bar \alpha_2,\bar \alpha_3,\dots):=\lim_{n\to\infty}\left( \alpha_2, \alpha_3 , \dots \right)
  \end{equation}
  exists, where the limit is taken to be pointwise. It follows that for each $j \geq 2$, $\bar \alpha_j \in [0,1]$ and $\sum_{j=2}^\infty \bar \alpha_j \leq 1$ by Fatou's lemma. Note that the sum is strictly less than 1 when a positive fraction of the mass of conjugacy class $\Gamma$ comes from cycles whose size tends to $\infty$. This will be an important distinction in what follows. For $x \in [0,1]$ and $c>0$ define
  \begin{equation}\label{eq:Psi}
    \psi(x,c)=\exp\Big\{-c \big(1-\sum_{j=2}^\infty \bar \alpha_j (1-x)^{j-1}\big)\Big\}.
  \end{equation}
  Note that for each $c>0$, $x \mapsto \psi(x,c)$ is convex on $[0,1]$. Moreover, the function $ x \mapsto \psi(1-x,c)$ is the generating function of a random variable whose law depends on $c$ and is degenerate if $\sum_{j \geq 2} \bar \alpha_j <1$. Note that in the case $\Gamma = T$ of transpositions, $\psi(x,c) = e^{-cx}$ so that random variable is simply Poisson $(c)$.
  %On the other hand if $\sum_{j \geq 2} \bar \alpha_j =1$ then for any $c>0$ it is possible to write $\psi(\cdot,c)$ as the generating function of a random variable.

  \begin{lemma}\label{lemma:root_psi}
    Define
    \begin{equation}\label{eq:c_Gamma}
      c_\Gamma:= \begin{cases}
	\left(\sum_{j=2}^\infty (j-1)\bar \alpha_j \right)^{-1} & \text{ if } \sum_{j=2}^\infty \bar \alpha_j=1\\
	\quad 0 & \text{ if } \sum_{j=2}^\infty \bar \alpha_j<1.
      \end{cases}
    \end{equation}
    Then for $c>c_\Gamma$ there exists a unique $\theta(c) \in (0,1)$ such that
    \[
      \theta(c)=1-\psi(\theta(c),c).
    \]
    For $c>c_\Gamma$, $c \mapsto \theta(c)$ is increasing, continuous and differentiable. Further $\lim_{c \downarrow c_\Gamma}\theta(c)=0$ and $\lim_{c \uparrow \infty}\theta(c)=1$.
  \end{lemma}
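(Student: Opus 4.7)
The plan is to study $F(x,c):=1-\Psi(x,c)-x$ on $[0,1]$ for fixed $c$, show it has a unique zero in $(0,1)$ whenever $c>c_\Gamma$, and then read off monotonicity, regularity and limits from the implicit function theorem.

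First I would establish that $\Psi(\cdot,c)$ is strictly convex, so that $F(\cdot,c)$ is strictly concave. Write $\Psi(x,c)=e^{-c g(x)}$ with $g(x)=1-\sum_{j\ge 2} k'_j(1-x)^{j-1}$. Then $g'(x)=\sum_{j\ge 2}(j-1)k'_j(1-x)^{j-2}\ge 0$ and $g''(x)\le 0$, which gives $\Psi''=\bigl(c^2(g')^2-cg''\bigr)\Psi>0$ on $(0,1)$. Next I would evaluate the endpoints: $F(1,c)=-e^{-c}<0$ and
\[
F(0,c)=1-\exp\!\Bigl(-c\bigl(1-\sum_{j\ge 2} k'_j\bigr)\Bigr).
\]
When $\sum k'_j<1$ (so $c_\Gamma=0$), $F(0,c)>0$ for every $c>0$, and strict concavity plus the intermediate value theorem yields exactly one root in $(0,1)$.

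The case $\sum k'_j=1$ is the delicate one and I expect it to be the main obstacle. Here $F(0,c)=0$ for every $c$, so $0$ is always a (spurious) root, and the question is when a second root emerges. The entry point is the calculation
\[
\partial_x F(0,c) \;=\; c\sum_{j\ge 2}(j-1)k'_j - 1 \;=\; \frac{c}{c_\Gamma}-1,
\]
which changes sign precisely at $c=c_\Gamma$. For $c>c_\Gamma$, strict concavity of $F(\cdot,c)$ together with $F(0,c)=0$, $\partial_x F(0,c)>0$ and $F(1,c)<0$ forces exactly one additional root $\theta(c)\in(0,1)$; for $c\le c_\Gamma$ concavity forces $F(\cdot,c)\le 0$ and no such root exists.

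For monotonicity, continuity and differentiability I would use the implicit function theorem. At $\theta(c)\in(0,1)$, strict concavity together with $F(1,c)<0$ gives $\partial_x F(\theta(c),c)<0$, so $\theta$ is $C^1$ in $c$. A direct computation shows
\[
\partial_c F(\theta(c),c) \;=\; \Psi(\theta(c),c)\Bigl(1-\sum_{j\ge 2} k'_j(1-\theta(c))^{j-1}\Bigr) \;>\; 0,
\]
since $\theta(c)>0$, and therefore $\theta'(c)>0$. For the limits: as $c\to\infty$, $\Psi(x,c)\to 0$ for every $x<1$, so $F(1-\delta,c)\to \delta>0$ for any $\delta>0$, forcing $\theta(c)\to 1$. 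For $c\downarrow c_\Gamma$, extract any subsequential limit $\theta^\ast\in[0,1]$; continuity of $F$ gives $F(\theta^\ast,c_\Gamma)=0$, and in the case $\sum k'_j=1$ the point $0$ is a double root of $F(\cdot,c_\Gamma)$ (since $F(0,c_\Gamma)=0$ and $\partial_x F(0,c_\Gamma)=0$), so strict concavity rules out any other zero in $[0,1)$, forcing $\theta^\ast=0$; the case $c_\Gamma=0$ is identical with $c_\Gamma$ replaced by $0$.
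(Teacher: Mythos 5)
Your existence-and-uniqueness analysis is essentially the paper's: study $F(x,c)=1-\Psi(x,c)-x$, note concavity, evaluate $F$ at $0$ and $1$, and split into the two cases $\sum k'_j<1$ and $\sum k'_j=1$, using the sign of $\partial_x F(0,c)$ in the latter case. Where you genuinely diverge is the regularity part. The paper obtains continuity (and, rather tersely, differentiability) from the iteration $g_{c,n+1}=1-\Psi(g_{c,n},c)\to\theta(c)$ combined with the uniform bound $\Psi(x,c)-\Psi(x,c+\delta)\le 1-e^{-\delta}$, and gives only a one-line remark for monotonicity. You instead invoke the implicit function theorem: strict concavity together with $F(1,c)<0$ gives $\partial_x F(\theta(c),c)<0$, while a direct computation gives $\partial_c F(\theta(c),c)=g(\theta(c))\Psi(\theta(c),c)>0$, so $\theta\in C^1$ and $\theta'>0$ in one stroke. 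This is cleaner and more self-contained than the paper's route, and your subsequential-limit argument for $c\downarrow c_\Gamma$ and $c\to\infty$ matches the paper's.

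One small caveat: your assertion that $\Psi''(\cdot,c)>0$ on $(0,1)$ fails in the degenerate case where $k'_j\equiv 0$ (then $g'\equiv 0$, $\Psi\equiv e^{-c}$ is constant, and $F$ is affine). The conclusion of the lemma still holds there trivially because $F(x,c)=1-e^{-c}-x$ is strictly decreasing with $F(0,c)>0>F(1,c)$, and $\partial_x F\equiv -1<0$ so the IFT step still applies. The paper's argument, using only concavity rather than strict concavity, sidesteps this wrinkle; you should either exclude this case explicitly or note, as above, that it is handled directly. Likewise, when $\sum k'_j=1$ and $\sum(j-1)k'_j=\infty$ (so $c_\Gamma=0$), the expression $\partial_x F(0,c)=c/c_\Gamma-1$ is only formal; what is true is that $\partial_x F(x,c)\to+\infty$ as $x\downarrow 0$, which still gives $F>0$ near $0$ and the same conclusion.
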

  \begin{proof}
    For $x \in [0,1]$ and $c>0$ define $f_c(x):=1-\psi(x,c)-x$. There are two cases to consider. First suppose that $z=\sum_{j=2}^\infty \bar \alpha_j<1$. Then we have that
    \[
      f_c(0)=1-e^{-c(1-z)}>0 \qquad \text{ and } \qquad f_c(1)=-e^{-c}<0.
    \]
    As $x \mapsto f_c(x)$ is concave on $[0,1]$ it follows that there exists a unique $\theta(c) \in (0,1)$ such that $f_c(\theta(c))=0$.

    Next suppose that $\sum_{j=2}^\infty \bar \alpha_j=1$, then
    \[
      f_c(0)=0 \qquad \text{ and } \qquad f_c(1)=-e^{-c}<0
    \]
    Moreover we have that
    \[
      \frac{d}{dx}f_c(x)|_{x=0}=c \tsum_{j=2}^\infty (j-1)\bar \alpha_j - 1.
    \]
    Hence for $c> c_\Gamma$ we have that $\frac{d}{dx}f_c(x)|_{x=0} >0$ and again by concavity it follows that there exists a unique $\theta(c) \in (0,1)$ such that $f_c(\theta(c))=0$.

    For the rest of the statements suppose that $c>c_\Gamma$. The fact that $c \mapsto \theta(c)$ is increasing follows from the definition of $\psi(x,c)$ and the fact that $\theta(c)=\psi(\theta(c),c)$. Continuity and differentiability for $c> c_\Gamma$ is a straightforward application of the inverse function theorem.
\begin{comment}
    Next we show continuity and differentiability. For $x \in [0,1]$ define $g_{c,1}(x)=1-\psi(x,c)$ and for $n \geq 2$ define recursively $g_{c,n}(x)=1-\psi((f_{c,n-1}(x),c)$. Then a simple argument (see \cite[I.3 Lemma 2]{athreya_ney} for instance) shows that for any $x \in (0,1)$ we have that $g_{c,n}(x) \rightarrow \theta(c)$ as $n \rightarrow \infty$.

    Let $\delta >0$, then it follows that for any $x \in (0,1)$:
    \[
      \theta(c+\delta)-\theta(c)= \lim_{n \rightarrow \infty} [g_{n,c+\delta}(x)-g_{n,c}(x)].
    \]
    On the other hand we have that uniformly in $x \in [0,1]$,
    \[
      \psi(x,c)-\psi(x,c+\delta) \leq 1-e^{-\delta}
    \]
    and hence it follows that
    \[
      \theta(c+\delta)- \theta(c) \leq 1-e^{-\delta}
    \]
    and from this it follows that $c \mapsto \theta(c)$ is continuous and differentiable on $(c_\Gamma,\infty)$.
\end{comment}

    Notice that $\theta(c) \in [0,1]$ and is monotone, hence $\theta(c)$ converges as $c \downarrow c_\Gamma$ to a limit $L$. Then it follows that $L$ solves the equation $L=1-\psi(L,c_\Gamma)$. This equation has only a zero solution and thus $L=0$ and hence $\lim_{c \downarrow c_\Gamma}\theta(c)=0$. The limit as $c \uparrow \infty$ follows from a similar argument.
  \end{proof}

 \begin{rem}
  In the case when $\Gamma=T$ is the set of transpositions we have that $k'_2 = 1$ and $\bar \alpha_j = 0$ for $j\ge 3$, hence $\psi(x,c)=e^{-cx}$ and thus the definition of $\theta(c)$ above agrees with the definition given in the introduction.
\end{rem}

  Having introduced $\theta(c)$ we now introduce the notion of Ricci curvature we will use in the general case. For $c>0$ and $\sigma\neq \sigma'$, let
  \begin{equation}\label{eq:curvature_def2}
    \kappa_c(\sigma,\sigma') = 1- \frac{W_1(X^{\sigma}_{\lfloor cn/k \rfloor },X^{\sigma'}_{\lfloor cn/k \rfloor })}{d(\sigma,\sigma')}
  \end{equation}
  where $d$ is the graph distance associated with transpositions (\emph{even} in the case $\Gamma \neq T$). Define $\kappa_c(\sigma,\sigma)=1$. Then let
  $$
  \kappa_c = \inf \kappa_c(\sigma, \sigma'),
  $$
  where the infimum is taken over all $\sigma,\sigma'\in \Sn$ such that $d(\sigma,\sigma')$ is even.
  That is, $\kappa_c(\sigma, \sigma') = \kappa_{\lfloor cn/k\rfloor }(\sigma, \sigma')$ with our notation from \eqref{D:Ricci}. We now state a more general form of Theorem \ref{thm:curv} which in particular covers the case of Theorem \ref{thm:curv}.

  \begin{thm}\label{thm:curv2}
    Let $\Gamma\subset \Sn$ be a conjugacy class such that $k = |\Gamma|=o(n)$ and the convergence in~\eqref{eq:k'} holds. Recall the definition of $c_\Gamma$ from \eqref{eq:c_Gamma}. Then for $c\leq c_\Gamma$,
    \begin{equation}\label{E:subcrit1}
      \lim_{n \to \infty} \kappa_c = 0.
    \end{equation}
    On the other hand, for $c>c_\Gamma$
    \begin{equation}\label{eq:thm_lower2}
     \theta(c)^4 \le \liminf_{n \rightarrow \infty} \kappa_c  \le  \limsup_{n \rightarrow \infty} \kappa_c \le \theta(c)^2
    \end{equation}
    %and
    %\begin{equation}\label{eq:thm_upper2}
    %  \limsup_{n \rightarrow \infty} \kappa_c  \leq \theta(c)^2
    %\end{equation}
		where $\theta(c)$ is the unique solution in $(0,1)$ of
    \begin{equation}\label{eq:theta2}
      \theta(c)=1-\psi(\theta(c),c).
    \end{equation}
    where $\psi$ is given by \eqref{eq:Psi}.
  \end{thm}

  \subsection{Curvature implies mixing}

  We now show how Theorem \ref{thm:curv2} implies the second limit in Theorem \ref{thm:mix}.
  First suppose that $\Gamma=\Gamma(n)$ is a sequence of conjugacy classes for which the limit~\eqref{eq:k'} holds and $|\Gamma|=o(n)$.
  Again fix $\epsilon>0$ and define $t= (1+2\epsilon)(1/k)n \log n$ and let $t' = \lfloor (1+\epsilon)(1/k)n \log n\rfloor$ where $k=|\Gamma|$. We are left to prove that $d_{TV}(t) \rightarrow 0$ as $n \rightarrow \infty$. For $s \geq 0$ let
  $$
  \bar d_{TV}(s) : = \sup_{\sigma,\sigma'} \| X^\sigma_s - X^{\sigma'}_s \|_{TV},
  $$
  where the sup is taken over all permutations at even distances. We first claim that it suffices to prove that
  \begin{equation}\label{dbar}
    \bar d_{TV}(t') \to 0 \text{ as } n \rightarrow \infty.
  \end{equation}
  Indeed, assume that $\bar d_{TV}(t') \to 0$ as $n \rightarrow \infty$. Then there are two cases to consider. Assume that $\Gamma \subset \An$. Then $X_s \in \An$ for all $s\ge 1$ and $\mu$ is uniform on $\An$. Then by Lemma 4.11 in \cite{peres_mixing_book},
  $$
  \sup_{\sigma \in \An} \| X^\sigma_{t'} - \mu \|_{TV} \le 2 \bar d_{TV}(t').
  $$
  Hence Theorem \ref{thm:mix} (or more precisely the second limit in that theorem) follows from \eqref{dbar} in this case. In the second case, $\Gamma \subset \An^c$. In this case $X_s \in\An$ for $s$ even, and $X_s \in \An^c$ for $s$ odd. Using the same lemma, we deduce that if $s \ge t'$ is even,
  $$
  \| X^{\id}_s - \mu_1 \|_{TV}  \le 2 \bar d_{TV}(s)
  $$
  where $\mu_1$ is uniform on $\An$. However, if $s\ge t'$ is odd,
  $$
  \| X^{\id}_s - \mu_2 \|_{TV} \le 2 \bar d_{TV}(s)
  $$
  where this time $\mu_2$ is uniform on $\An^c$. Let $N=(N_s:s \geq 0)$ be the Poisson clock of the random walk $Y$. Then $\P(N_s \text{ even} ) \to 1/2$ as $s\to \infty$, $\mu = (1/2)(\mu_1+ \mu_2)$, and $\P(N_t \ge t') \to 1$ as $n \to \infty$. Thus we deduce that
  $$
  \| Y^{\id}_t - \mu \|_{TV}\to 0.
  $$
  Again, the second limit in Theorem \ref{thm:mix} follows. Hence it suffices to prove \eqref{dbar}.

  Note that for any two random variables $X,Y$ on a metric space $(S,d)$ we have the obvious inequality $\|X - Y\|_{TV} \leq W_1(X,Y)$ provided that $x\neq y $ implies $d(x,y) \ge 1$ on $S$. This is in particular the case when $S= \Sn$ and $d$ is the word metric induced by the set $T$ of transpositions.
  In other words it suffices to prove mixing in the $L^1$-Kantorovitch distance.

Note that by definition of $\kappa_c$,  if $\sigma$, $\sigma'$ are at an even distance then $$W_1(X^{\sigma}_{\lfloor cn/k \rfloor },X^{\sigma'}_{\lfloor cn/k \rfloor })  \le (1- \kappa_c) d( \sigma, \sigma'),
$$
so that, iterating as in Corollary 21 of \cite{ollivier_ricci} (and noting that the distance between $X^{\sigma}_{\lfloor cn/k \rfloor }$ and $X^{\sigma'}_{\lfloor cn/k \rfloor }$ is again even),  we have for each $s \geq 1$,
  \begin{equation}\label{eq:s_kappa_bound}
		\sup_{d(\sigma, \sigma') \text{ even}}W_1(X^{\sigma}_{s\lfloor cn/k \rfloor },X^{\sigma'}_{s \lfloor cn/k \rfloor })  \leq (1-\kappa_c)^s  \sup_{d(\sigma, \sigma') \text{ even}}d(\sigma, \sigma') \le n (1- \kappa_c)^s
  \end{equation}
  since the diameter of $\Sn$ is equal to $n-1$.
  Solving
  $
    n (1-\kappa_c)^s \leq \delta
    $
  we get that
  \begin{equation}\label{eq:s_lower_bound}
    s \geq \frac{\log n -\log \delta }{-\log(1-\kappa_c)}
  \end{equation}
  Thus if $u = scn/k \ge s \lfloor cn/k\rfloor $, it suffices that
  \begin{equation}\label{cond}
    u \ge  \frac1k \frac{c}{- \log (1- \kappa_c)} n( \log n - \log \delta).
  \end{equation}
  Now, Theorem \ref{thm:curv2} gives
  \[
    \liminf_{n \rightarrow \infty}-\log(1-\kappa_c) \geq -\log(1-\theta(c)^4).
  \]

  \begin{lemma}\label{lemma:log_theta}
    We have that
    \[
      \lim_{c \rightarrow \infty} \frac{c}{\log(1-\theta(c)^4)} = -1.
    \]
  \end{lemma}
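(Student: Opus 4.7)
The plan is to reduce everything to showing that $\log(1-\theta(c)) \sim -c$ as $c \to \infty$, since $\log(1-\theta(c)^4)$ differs from $\log(1-\theta(c))$ only by a bounded amount. Specifically, because $1-\theta(c)^4 = (1-\theta(c))(1+\theta(c)+\theta(c)^2+\theta(c)^3)$ and the second factor converges to $4$ as $c\to\infty$ (using $\theta(c) \to 1$ from Lemma \ref{lemma:root_psi}), we will have
\[
\log(1-\theta(c)^4) = \log(1-\theta(c)) + \log 4 + o(1),
\]
so dividing by $c$ and inverting reduces the statement to $\log(1-\theta(c))/c \to -1$.

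To establish this asymptotic, I set $\epsilon(c) = 1-\theta(c)$ and use the defining equation $\theta(c) = 1 - \Psi(\theta(c), c)$, which rewrites as
\[
\epsilon(c) = \exp\Bigl\{ -c\bigl(1 - h(\epsilon(c))\bigr) \Bigr\}, \qquad h(\epsilon) := \sum_{j\geq 2} k'_j \epsilon^{j-1}.
\]
Taking logarithms yields the exact identity $\log \epsilon(c) = -c + c\,h(\epsilon(c))$, so the whole task is to show that the correction $c\,h(\epsilon(c))$ is $o(c)$, in fact $o(1)$.

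The key observation is the crude bound $h(\epsilon) \le \epsilon \sum_{j\ge 2} k'_j \le \epsilon$ valid for $\epsilon \in [0,1]$. Substituting this back, $\log\epsilon(c) \le -c(1-\epsilon(c))$, so a bootstrap argument applies: since $\epsilon(c) \to 0$ (by $\lim_{c\to\infty}\theta(c)=1$ in Lemma \ref{lemma:root_psi}), for $c$ large we have $\epsilon(c) \le 1/2$, hence $\log \epsilon(c) \le -c/2$, i.e.\ $\epsilon(c) \le e^{-c/2}$. Then
\[
c\, h(\epsilon(c)) \;\le\; c\,\epsilon(c) \;\le\; c\,e^{-c/2} \;\longrightarrow\; 0,
\]
so $\log \epsilon(c) = -c + o(1)$, and in particular $\log(1-\theta(c))/c \to -1$. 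Combining with the identity in the first paragraph gives $\log(1-\theta(c)^4)/c \to -1$, and inverting yields the claimed limit.

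The only non-routine point is to make sure $\epsilon(c) \to 0$ is genuinely available as input; this is recorded in Lemma \ref{lemma:root_psi} as $\lim_{c\uparrow\infty}\theta(c) = 1$, so there is no real obstacle and the computation is essentially a one-step bootstrap on the fixed-point equation. No structural properties of the sequence $(k'_j)$ beyond $\sum_j k'_j \le 1$ are needed.
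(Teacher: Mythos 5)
Your argument is correct and follows essentially the same route as the paper: both reduce $\log(1-\theta(c)^4)$ to $\log(1-\theta(c))$ (you by factoring $1-\theta^4$, the paper by L'H\^opital), and both then exploit the fixed-point identity $\log(1-\theta(c)) = -c\bigl(1-\sum_j k'_j(1-\theta(c))^{j-1}\bigr)$ together with $\theta(c)\to 1$ to conclude. Your bootstrap establishing $\epsilon(c)\le e^{-c/2}$ is slightly more than necessary --- the bound $h(\epsilon(c))\le \epsilon(c)\to 0$ already suffices to make the correction term vanish --- but this is a harmless refinement, not a deviation in method.
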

  \begin{proof}
    Using L'Hopital's rule twice we have that
    \[
      \lim_{\theta \uparrow 1} \frac{\log(1-\theta)}{\log(1-\theta^4)}=\lim_{\theta \uparrow 1}\frac{1-\theta^4}{(1-\theta)4\theta^3}=1.
    \]
    Next we have that $\lim_{c \rightarrow \infty} \theta(c)=1$ and hence
    \begin{align*}
      \lim_{c \rightarrow\infty} \frac{c}{\log(1-\theta(c)^4)}&= \lim_{c\rightarrow\infty}\frac{c}{\log(1-\theta(c))}=\lim_{c\rightarrow\infty}\frac{c}{\log \psi(\theta(c),c)}\\
      &=\lim_{c\rightarrow\infty}-\frac{1}{1-\sum_{j=2}^\infty \bar \alpha_j(1-\theta(c))^{j-1}}=-1.
    \end{align*}
  \end{proof}

  Consequently we have that for $u\ge t'=\lfloor (1+\epsilon)(1/k)n \log n\rfloor$ $u$ satisfies \eqref{cond} for some sufficiently large $c>c_\Gamma$.
  Hence $\limsup_{n \rightarrow \infty} \bar d_{TV}(t')\to 0$ and thus \eqref{dbar} holds, which shows Theorem~\ref{thm:mix} for conjugacy classes such that the limit in~\eqref{eq:k'} exists and $|\Gamma|=o(n)$.

\medskip   Now suppose that $\Gamma$ is a conjugacy class such that $|\Gamma|=o(n)$.
  Let $t' = \lfloor (1+\epsilon)(1/|\Gamma|)n \log n\rfloor$ and notice that $d_{TV}(t')$ is bounded.
  Along any subsequence $\{n_i\}_{i \geq 1}$ such that $\lim_{n_i\to\infty}d_{TV}(t')$ exists, we can extract a further sub-sequence $\{n_{i_j}\}_{j \geq 1}$ such that~\eqref{eq:k'} holds since $(\alpha_j)_{j \ge 2} \in [0,1]^\infty$ which is compact under the product topology. Then we see that $\lim_{n_{i_j}\to\infty}d_{TV}(t')=0$ and consequently $\lim_{n_i\to\infty}d_{TV}(t')=0$.
  Since $d_{TV}(t')$ is bounded and converges to $0$ along any convergent subsequence, we conclude that $\lim_{n\to\infty}d_{TV}(t')=0$, thus concluding the proof.

  \subsection{Stochastic commutativity}

  To conclude this section on curvature, we state a simple but useful lemma. Roughly, this says that the random walk is ``stochastically commutative''. This can be used to show that the $L^1$-Kantorovitch distance is decreasing under the application of the heat kernel. In other words, initial discrepancies for the Kantorovitch metric between two permutations are only smoothed out by the application of random walk.

  \begin{lemma}\label{lemma:transpo_end}
    Let $\sigma$ be a random permutation with distribution invariant by conjugacy. Let $\sigma_0$ be a fixed permutation. Then $\sigma_0 \circ \sigma$ has the same distribution as $\sigma\circ \sigma_0 $.
  \end{lemma}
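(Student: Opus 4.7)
The plan is to invoke the definition of conjugacy invariance directly, with a single well-chosen conjugator. By hypothesis, for every fixed $g \in \Sn$ the random permutation $g \circ \sigma \circ g^{-1}$ has the same law as $\sigma$. Specializing to $g = \sigma_0$ gives
\[
\sigma_0 \circ \sigma \circ \sigma_0^{-1} \stackrel{d}{=} \sigma.
\]
Right-multiplication by the fixed element $\sigma_0$ is a deterministic bijection of $\Sn$ and so preserves equality in distribution; applying it to both sides yields $\sigma_0 \circ \sigma \stackrel{d}{=} \sigma \circ \sigma_0$, which is exactly the statement of the lemma.

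There is essentially no obstacle: the content of the lemma is just the observation that conjugation invariance by $\sigma_0$ is equivalent, after right-multiplication by $\sigma_0$, to equality in law of the two compositions $\sigma_0 \circ \sigma$ and $\sigma \circ \sigma_0$. I would present it in two lines and move on. The reason it merits being recorded is its intended role as a drop-in fact in the sequel, where $\sigma$ will typically be a step (or a partial product of steps) of the random walk, whose increments are uniform on the conjugacy class $\Gamma$ and hence conjugacy-invariant; this is precisely the property that powers the ``stochastic commutativity'' heuristic and, via it, the fact that the $L^1$-Kantorovitch distance is non-increasing under the heat kernel.
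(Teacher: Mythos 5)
Your proof is correct and is essentially the same as the paper's: both invoke conjugacy-invariance to get $\sigma_0 \circ \sigma \circ \sigma_0^{-1} \stackrel{d}{=} \sigma$ and then right-multiply by $\sigma_0$ (the paper phrases this as defining $\sigma' = \sigma_0\sigma\sigma_0^{-1}$ and observing $\sigma_0\sigma = \sigma'\sigma_0$). No difference of substance.
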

  \begin{proof}
    Define $\sigma' = \sigma_0 \circ \sigma \circ \sigma_0^{-1}$. Then since $\sigma$ is invariant under conjugacy, the law of $\sigma'$ is the same as the law of $\sigma$. Furthermore, we have $ \sigma_0 \circ \sigma = \sigma'  \circ \sigma_0$ so the result is proved.
  \end{proof}

%
%  \begin{rem} \label{R:pos_curv}The nonnegativity of the curvature $\kappa_t (\sigma, \sigma')$ immediately follows.
%    Indeed, by the triangle inequality and translation invariance it suffices to show that $\kappa_t (\id, \tau) \ge 0$ where $\tau$ is any transposition. Then observe that $\tau \circ X^{\id}_t$ has the same distribution as $X^{\id}_t \circ \tau$ since $X^{\id}_t$ is invariant by conjugacy. But the distance $d(X^{\id}_t, X^{\id}_t \circ \tau)$ is the same $d(\id, \tau)$, namely 1. Hence $\kappa_t(\sigma, \sigma') \ge 0$.
%  \end{rem}
%
  This lemma will be used repeatedly in our proof, as it allows us to concentrate on events of high probability for our coupling.

  \section{Preliminaries on random hypergraphs}

  For the proof of Theorem \ref{thm:mix} we rely on properties of certain random hypergraph processes. The reader who is only interested in a first instance in the case of random transpositions, and is familiar with Erd\H{o}s--Renyi random graphs and with the result of Schramm~\cite{schramm_transpo} may safely skip this section.

  \subsection{Hypergraphs}
  \label{SS:hyper}
	In this section we present some preliminaries which will be used in the proof of Theorem \ref{thm:curv2}. Throughout we let $\Gamma \subset \Sn$ be a conjugacy class and let $(k_2,k_3,\dots)$ denote the cycle structure of $\Gamma$. Thus $\Gamma$ consists of permutations such that in their cycle decomposition they have $k_2$ many transpositions, $k_3$ many $3$-cycles and so on. Note that we have suppressed the dependence of $\Gamma$ and $(k_2,k_3,\dots)$ on $n$. We assume that \eqref{eq:k'} is satisfied so that for each $j\geq 2$, $j k_j/|\Gamma| \rightarrow \bar \alpha_j$ as $n \rightarrow \infty$. We also let $k=|\Gamma|$ so that $k=\sum_{j \geq 2} j k_j$, as usual.

%  We will need some results which generalise those of Schramm~\cite{schramm_transpo}.
%  The framework which we will use is that of random hypergraphs.

  \begin{definition}
    A hypergraph $H=(V,E)$ is given by a set $V$ of vertices and $E \subset \mathcal{P}(V)$ of edges, where $\mathcal{P}(V)$ denotes the set of all subsets of $V$. An element $e \in E$ is called a hyperedge and we call it a $j$-hyperedge if $|e|=j$.
  \end{definition}

  Consider the random walk $X= (X_t: t = 0,1 \ldots)$ on $\Sn$ where $X_t = X^{\id}_t$ with our notations from the introduction. Hence
  $$
  X_t = \gamma_1 \circ \ldots \circ \gamma_t
  $$
  where the sequence $(\gamma_i)_{i\ge 1}$ is i.i.d. uniform on $\Gamma$. A given step of the random walk, say $\gamma_s$, can be broken down into cycles, say $\gamma_{s,1} \circ \ldots \gamma_{s,r}$ where $r = \sum_j k_j$. We will say that a given cyclic permutation $\gamma$ has been applied to $X$ before time $t$ if $\gamma = \gamma_{s,i}$ for some $s\le t$ and $1 \le i \le r$.

  To $X$ we associate a certain hypergraph process $H=(H_t:t=0,1, \ldots)$ defined as follows. For $t =0,1 ,\ldots$, $H_t$ is a hypergraph on $\{1,\dots,n\}$ where a hyperedge $\{x_1,\dots,x_j\}$ is present if and only if a cyclic permutation consisting of the points $x_1,\dots,x_j$ in some arbitrary order has been applied to the random walk $X$ prior to time $t$ as part of one of the $\gamma_i$'s for some $i \le t$. Thus at every step, we add to $H_t$ $k_j$ hyperedeges of size $j$ sampled uniformly at random without replacement, and these edges are independent from step to step. However, note that the presence of hyperedges themselves are not in general independent.

  \subsection{Giant component of the hypergraph}

  In the case $\Gamma=T$, the set of transpositions, the hypergraph $H_s$ is a realisation of an Erd\H{o}s-Renyi graph. Analogous to Erd\H{o}s-Renyi graphs, we first present a result about the size of the components of the hypergraph process $H=(H_t:t= 0,1,\dots)$ (where by size, we mean the number of vertices in this component). For the next result recall the definition of $\psi(x,c)$ in \eqref{eq:Psi}. Recall that for $c> c_\Gamma$, where $c_\Gamma$ is given by \eqref{eq:c_Gamma}, there exists a unique root $\theta(c)\in (0,1)$ of the equation $\theta(c)=1-\psi(\theta(c),c)$.

  \begin{thm}\label{lemma:hyper_k_cycle}
    Consider the random hypergraph $H_s$ and suppose that $s=s(n)$ is such that $sk/n \rightarrow c$ as $n \rightarrow \infty$ for some $c> c_\Gamma$.  Then there is a universal constant $D>0$ such that with probability tending to one all components but the largest have size at most $ D n^{2/3} (\log (n))^3$.  Furthermore, the size of the largest component, normalised by $n$, converges to $\theta(c)$ in probability as $n\to \infty$.
  \end{thm}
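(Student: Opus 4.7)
The plan is to carry out a breadth-first exploration of the component of a uniformly chosen vertex $v$ in $H_s$, couple it with a Galton--Watson tree whose survival probability is exactly $\theta(c)$, and then use first- and second-moment calculations together with a sprinkling step to lift the branching-process dichotomy (``die out quickly'' vs.\ ``survive forever'') into the hypergraph statement.

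For the \emph{offspring structure}, at any fixed vertex $u$ the number of $j$-hyperedges of $H_s$ incident to $u$ is, asymptotically, $\Poi(ck'_j)$ (using $sk/n \to c$ and $jk_j/k \to k'_j$), and these Poisson variables are asymptotically independent across $j$. Each $j$-hyperedge contributes $j-1$ new descendants to the BFS, and the ``mass at infinity'' $1-\sum_j k'_j$ coming from cycles whose length diverges with $n$ must be accounted for as additional direct connections to the bulk. Lemma~\ref{lemma:root_psi} has been designed precisely so that the survival probability of the resulting branching process equals the unique solution $\theta(c)\in(0,1)$ of $\theta = 1-\Psi(\theta,c)$. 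The coupling of the BFS with this tree is valid until roughly $n^{1/2-o(1)}$ vertices have been revealed; the two sources of error are birthday-type collisions in the revealed set and the dependence between cycles of a single $\gamma_i$.

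For the \emph{small components}, conditional on extinction the tree is subcritical (since $\Psi(\cdot,c)$ is strictly convex with $\Psi(\theta(c),c) = 1-\theta(c)$ and $\Psi'(\theta(c),c) < 1$), so the total progeny has exponentially decaying tails. A first-moment estimate then shows that the expected number of vertices lying in a component of size between $\beta\log n$ and $\eta n$ is $o(n)$ for suitable $\beta(c),\eta(c)>0$, and Markov's inequality upgrades this to the statement that every non-giant component has size at most $\beta\log n$ with high probability. For the \emph{giant component}, let $Z_n$ count vertices whose exploration reaches size $\beta\log n$; the first moment yields $\E[Z_n]=(\theta(c)+o(1))n$, while a second moment estimate (using that two explorations from independent starting points evolve essentially independently until they first collide) gives $\var(Z_n)=o(n^2)$. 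A short sprinkling argument --- split $s=s_1+s_2$ with $s_2=\eps s$ and observe that any two distinct components of size $\ge \beta\log n$ are merged by some hyperedge of $H_s\setminus H_{s_1}$ with probability $1-o(1)$ --- then shows that the $Z_n$ surviving vertices all lie in a single component, giving both $|\text{giant}|/n\to\theta(c)$ in probability and the bound on all other components.

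The technically delicate point, and the \emph{main obstacle}, is the dependence built into a single step $\gamma_i$: the $k_j$ many $j$-cycles of $\gamma_i$ are required to be disjoint, so the hyperedges contributed by one step are not independent, and the distribution of the cycle containing a fixed vertex $v$ is not exactly a uniform random $j$-subset. Quantifying these correlations and propagating them through the BFS coupling, in the generality where $|\Gamma|$ may grow with $n$ (so the cycle structure $(k_j)$ itself depends on $n$) and $\sum_j k'_j$ may be strictly less than $1$, is exactly where Schramm's Erd\H{o}s--R\'enyi-style calculation for $\Gamma=T$ needs to be genuinely extended, and is presumably what the remainder of the ``Preliminaries on random hypergraphs'' section will accomplish.
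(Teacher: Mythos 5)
Your strategy --- BFS exploration, comparison with a branching process whose survival probability is $\theta(c)$, first and second moments, and sprinkling to merge macroscopic pieces --- is the same as the paper's, and you have correctly located the obstacle: the cycles of a single step $\gamma_t$ are constrained to be disjoint, so the hyperedges within one step are dependent, and once $|\Gamma|$ is allowed to grow with $n$ (the limit law may even be improper, since $\sum_j k'_j < 1$ is allowed) this dependence must be controlled quantitatively all the way through the exploration.

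That control, however, is exactly what your sketch does not provide, and it is the heart of the theorem. The paper's mechanism is worth knowing: one bookkeeps, for each packet $t$ and each cycle size $j$, the count $Y^{(t)}_j(i)$ of $j$-hyperedges of the $t$-th packet revealed by stage $i$ of the exploration, and proves the \emph{uniform} estimate $\bigl|\E\bigl[x^{|A_{i+1}\setminus A_i|}\mid\cH_i\bigr]-\Psi(1-x,c)/x\bigr|\,\1_{\{T>i\}}\le w(n)\to 0$ for $i$ up to a stopping time $T$ of order $n^{2/3}$. The upper bound separates packets into good ones (those with $\sum_m m\,Y^{(t)}_m\le\eps_n k$) and bad ones, using the deterministic bound $|I|\le n^{2/3}/(\eps_n k)$ on the number of bad packets together with a truncation at $n^{1/4}$ to control birthday collisions. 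With this estimate in hand, the supermartingale $x_n^{|A_i|}(1-O(n^{-1/6}))^i$ (with $x_n$ the approximate fixed point of the generating function) and optional stopping give both the extinction probability $1-\theta(c)$ and the $\beta\log n$ bound on doomed explorations, and a further argument shows surviving explorations reach $Kn^{2/3}$ active vertices, after which sprinkling works just as you describe. Your ``asymptotically $\Poi(ck'_j)$, asymptotically independent across $j$'' is the right heuristic, but without a uniform estimate of this type --- or an equivalently quantitative coupling --- the remainder of your outline cannot be executed: you have identified the strategy and the difficulty but not resolved the difficulty.
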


Of course, this is the standard Erd\H{o}s--Renyi theorem in the case where $\Gamma = T$ is the set of transpositions. See for instance \cite{durrett_book}, in particular Theorem 2.3.2 for a proof. In the case of $k$-cycles with $k$ fixed and finite, this is the case of random regular hypergraphs analysed by Karo{\'n}ski and {\L}uczak \cite{KaronskiLuczak}. For the slightly more general case of bounded conjugacy classes, this was proved by Berestycki \cite{Berestycki_cycles}.

\paragraph{Discussion.}
Note that the behaviour of $H_s$ in Theorem \ref{lemma:hyper_k_cycle} can deviate markedly from that of Erd\H{o}s--Renyi graphs. The most obvious difference is that $H_s$ can contain mesoscopic components, something which has of course negligible probability for Erd\H{o}s-Renyi graphs. For example, suppose $\Gamma$ consists of $n^{1/2}$ transpositions and one cycle of length $n^{1/3}$. Then the giant component appears at time $n^{1/2}/2$ with a phase transition (i.e., $c_\Gamma>0$, because in this case $\sum \bar \alpha_j =1$, as most of the mass comes from microscopic cycles). Yet  even at the first step there is a component of size $n^{1/3}$. Nevertheless we will see that once there is a giant component there is a limit to how big can the nongiant component be (we show this is less that $O(n^{2/3})$ up to logarithmic terms; this is certainly not optimal).

From a technical point of view this has nontrivial consequences, as proofs of the existence of a giant component are usually based on the dichotomy between microscopic components and giant components. Furthermore, when the conjugacy class is large and consists of many small or mesoscopic cycles, the hyperedges have a strong dependence, which makes the proof very delicate.

\medskip In effect, perhaps surprisingly this will be the only place of the proof where all the possible ways in which the conjugacy class $\Gamma$ might be big (potentially of size very close to $n$), needs to be handled. The difficulty of the proof below is to find an argument which works no matter how $\Gamma$ is made up, so long as $k = |\Gamma| = o(n)$. This is of course also the problem in the original question of studying the mixing time of the random walk induced by $\Gamma$. However, what we have gained here compared to this original question, is the monotonicity of component sizes when hyperedges are added to $H_s$.

% \end{rem}

%\begin{proof}[Proof of Theorem \ref{lemma:hyper_k_cycle}]

 \paragraph{Preliminaries: exploration.} Suppose that $s=s(n)$ is such that $sk/n \rightarrow c$ for some $c> 0$ as $n \rightarrow \infty$ for some $c\geq 0$.
We reveal the vertices of the component containing a fixed vertex $v \in \{1,\dots,n\}$ using breadth-first search exploration, as follows. There are three states that each vertex can be: unexplored, removed or active. Initially $v$ is active and all the other vertices are unexplored. At each step of the iteration we select an active vertex $w$ according to some prescribed rule among the active vertices at this stage (say with the smallest label). The vertex $w$ becomes removed and every unexplored vertex which is joined to $w$ by a hyperedge becomes active. We repeat this exploration procedure until there are no more active vertices.
At stage $i = 0,1, \ldots$ of this exploration process, we let $A_i$, $R_i$ and $U_i$ denote the set of active, removed and unexplored vertices respectively. Thus initially $A_0=\{v\}$, $U_0=\{1,\dots,n\}\backslash \{v\}$ and $R_0=\emptyset$. We will let $a_i = |A_i|, u_i = |U_i|, r_i = |R_i|$.

For $t =1,\dots,s$ we call the hyperedges which are associated with the permutation $\gamma_t$ the $t$-th packet of hyperedges. Thus note that each packet consists of $k_j$ hyperedges of size $j$, $j \ge 2$, which are sampled uniformly at random without replacement from $\{1, \ldots,n\}$. In particular, within a given packet, hyperedges are not independent. However, crucially, hyperedges from different packets are independent. We will need to keep track of the hyperedges we reveal and where they ``came from" (i.e., which packet they were part of), in order to deal with these dependencies. More precisely, as we explore the hypergraph $H_s$, we discover various hyperedges of various sizes in $H_s$ and this may affect the likelihood of other types of hyperedges in subsequent steps of the exploration process. To account for this, we introduce for $t=1,\dots,s$ and for $j \geq 2$, the random subset of $\{1, \ldots, n\}$, $Y^{(t)}_{j} (i)$, which is defined to be the hyperedges of size $j$ in the $t$-th packet that were revealed in the exploration process prior to step $i$. We let $y_j^{(t)}(i) = |Y_j^{(t)}(i)|$ denote the number of such hyperedges.

\paragraph{Additional notations.} Let $i \ge 0$ and let $\cH_i$ denote the filtration generated by the exploration process up to stage $i$, including the information of the number of hyperedges of each size in each packet that were revealed up to step $i$ of the exploration process. That is,
$$
\cH_i = \sigma(A_1, \ldots, A_i, Y^{(t)}_j(1), \ldots, Y_j^{(t)}(i): 1\le t \le s, j \ge 2).
$$
Our first goal will be to give uniform stochastic bounds on the distribution of $a_{i+1} - a_i$, so long as $i$ is not too large. We will thus fix $i$ (a step in the exploration process) and in order to ease notations we will often suppress the dependence on $i$, in $Y^{(t)}_j(i)$: we will thus simply write $Y^{(t)}_j$ and $y_j^{(t)}$.
Note that by definition, for each $t=1,\dots,s$ and $j \geq 2$, $Y^{(t)}_j \leq k_j$ and
	\begin{equation}\label{eq:sum_Y}
		\tsum_{t=1}^s\sum_{j \geq 2} j y^{(t)}_j  \ge n- u_i= a_i + i,
	\end{equation}
{where the right hand side counts the total number of vertices explored by stage $i$, while the left hand side counts the sum of the sizes of all hyperedges revealed by stage $i$, so the $\ge$ sign accounts for possible intersections between the hyperedges.}

Let  $w$ be the vertex being explored for stage $i+1$. For $t = 1, \ldots, s$ let $M_t$ be the indicator that $w$ is part of an (unrevealed) hyperedge in the $t$-th packet. Thus, $(M_t)_{1\le t \le s}$ are independent conditionally given $\cH_i$, and
 \begin{equation}\label{Mt}
 \P( M_t = 1 | \cH_i) = \tsum_{j \geq 2}\tfrac{j(k_j-y^{(t)}_j)}{|U_{i}|},
\end{equation}
since $k_j - y_j^{(t)}$ counts the number of hyperedges of size $j$ still unrevealed in the $t$-th packet. If $w$ is part of a hyperedge in the $t$-th packet, let $V_t$ be the size of the (unique) hyperedge of that packet containing it.
Then
\begin{equation}\label{Vt}
		\P(V_t=j | \cH_i, M_t = 1)=
			\tfrac{j(k_j-y^{(t)}_j)}{\sum_{m \geq 2}m(k_m-y^{(t)}_m)}
\end{equation}
Note that when $M_t=1$ it implies that the denominator above is non-zero and thus \eqref{Vt} is well defined. When $M_t = 0$ we simply put $V_t = 1$ by convention. Then we have the following almost sure inequality:
\begin{equation}\label{eq:new_active}
		a_{i+1} - a_i \le -1+\tsum_{t=1}^s M_t(V_t-1).
\end{equation}
This would be an equality if it were not for possible self-intersections, as hyperedges connected to $w$ coming from different packets may share several vertices in common. In order to get a bound in the other direction, we simply truncate the $a_{i+1} - a_i$ at $n^{1/4}$. Let $I_i$ be the indicator that among the first $n^{1/4}$  {vertices to which $w$ is connected, no self-intersection or intersection with the past occurs.} Note that $\E(I_i) \ge p_n = 1- n^{-1/2}$, by straightforward bounds on the birthday problem. We then have
\begin{equation}\label{eq:new_active_lb}
(a_{i+1} - a_i)  \wedge n^{1/4} \ge -1 + I_i  \left (\tsum_{t=1}^s M_t ( V_t - 1) \wedge n^{1/4} \right).
\end{equation}

\paragraph{Organisation of proof of Theorem \ref{lemma:hyper_k_cycle}.} We will stop the exploration process once we have discovered enough vertices, or if the active set dies out, whichever comes first. We aim to show that starting from a given vertex $v$, with probability approximately $\theta(c)$ the cluster of $v$ contains about order $n$ vertices. However, we proceed in stages as different arguments are needed in order to reach so many vertices. In Step 1, we first show that the cluster contains about $(\log n)^2$ vertices with probability approximately $\theta(c)$. Then in Step 2, given that the exploration of the cluster has discovered $(\log n)^2$ vertices, we show that with high probability the exploration will in fact discover $n^{2/3}$ vertices. Finally, in Step 3 we show using the sprinkling technique that any two clusters that reach a size of about $n^{2/3}$ can be connected using only very few additional edges, which implies the result.

\medskip \noindent \textbf{Main quantitative lemma.} We define
\begin{align}
T^\downarrow& :=\inf\{i \geq 1: a_i =0\} \label{Tdown} \\
T^\uparrow &:= \inf\{i \geq 1:  a_i > n^{2/3} \} \label{Tup}
\end{align}
We set $T = T^\uparrow \wedge T^\downarrow$. %{Note that if $i \le T^\uparrow$, then by \eqref{eq:sum_Y}, the total number of explored vertices $a_i + i  \le 2n^{2/3}$.}
Hence our first goal (which we will show at the end of Step 2) will be to show that $T = T^\uparrow$ with probability $\theta(c)$: in fact we will show that $T_\downarrow$ occurs before $T^\uparrow$ or $n^{2/3}$ with probability approximately $1- \theta(c)$. Either way, this means that the component is greater than $n^{2/3}$ with probability approximately $\theta(c)$. To do this we need to study the distribution of of $a_{i+1} - a_i$; the next lemma shows that these random variables converge in distribution to a sequence of i.i.d. (possibly degenerate) random variables, uniformly for $i <T$: the limit is improper if $\sum_j \bar \alpha_j < 1$.

Equivalently, the active process $|A_i|$ converges (at least for finite dimensional marginals) to the exploration process of a Galton--Watson tree whose offspring distribution is given by the limit of $a_{i+1} - a_i + 1$ and thus has a moment generating function given by $\psi(1-\cdot, c)$.

\medskip It is perhaps surprising that the lemma below is sufficient for the proof of Theorem \ref{lemma:hyper_k_cycle}: the lemma below essentially only records whether a cycle is microscopic (finite) or ``more than microscopic"; in particular, whether the mass of $\Gamma$ comes from many small mesoscopic or fewer big cycles makes no difference.

\begin{comment}
We will furthermore need quantitative estimates on the generating function. Let $x= x_n$ be the solution of the equation
\begin{equation}\label{xnfixedpoint}
  \exp\Big\{-s\tfrac{k}{n} + \tfrac{sk}{n}\sum_{j \geq 2} x^{j-1} \tfrac{jk_j}{k} \Big\} = x.
\end{equation}
This is the extinction probability of a Galton--Watson tree which approximates very well the process $|A_i|$. (In the case of an Erd\H{o}s--Renyi graph, this would be the extinction probability of a Galton--Watson tree where the offspring distribution is Binomial $(n, c/n)$ rather than Poisson $(c)$).
What is crucial to us is that the error we make between the extinction probability of $A_i$ and the extinction probability of this Galton--Watson tree can be controlled quantitatively, despite the fact that the convergence of $jk_j/k $ to $\bar \alpha_j$ could be arbitrarily slow. With this definition it is easy to check that $x_n \in (0,1] $ is well-defined and $x_n \to 1- \theta(c)$ as $n \to \infty$.
\end{comment}

\begin{lemma}\label{lemma:A_mgf} For each $q_0 \in [0,1)$, there exists some deterministic function $w:\N \rightarrow \R$ such that $w(n) \to 0$ as $n \rightarrow \infty$ with the following property:
		\[
			\sup_{ i \ge 1 } \sup_{q \in  [0,q_0] }\Big|\E[q^{a_{i+1} - a_i} | \cH_{i}] - \tfrac{\psi(1-q,c)}{q}\Big| 1_{\{T > i \}}\leq w(n)
		\]
		almost surely.

\begin{comment}		
		Furthermore suppose $c> c_\Gamma$ so that $\theta(c) >0$. If $x = x_n$ is as in \eqref{xnfixedpoint} then we have
		\[
			\sup_{ i \ge 1 } \Big|\E[x_n^{a_{i+1} - a_i} | \cH_{i}] - 1\Big| 1_{\{T > i \}}\leq O(n^{-1/6}).
		\]
\end{comment}
	\end{lemma}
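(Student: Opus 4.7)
The plan is to factor the conditional MGF $\E[x^{\sum_t M_t(V_t-1)}\mid\cH_i]$ using the conditional independence of the $s$ packets, compute each factor explicitly from \eqref{Mt} and \eqref{Vt}, take logarithms, and identify the limit as $\log \Psi(1-x,c)$. The prefactor $1/x$ in the target formula then emerges from the $-1$ shift in \eqref{eq:new_active}/\eqref{eq:new_active_lb}. Most of the work is therefore in showing
\[
\sup_{i \ge 1}\left|\E[x^{\sum_t M_t(V_t-1)}\mid\cH_i] - \Psi(1-x,c)\right|\mathbf{1}_{\{T > i\}} \le w(n).
\]

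By conditional independence of packets given $\cH_i$, the MGF factors as $\prod_{t=1}^s \E[x^{M_t(V_t-1)}\mid\cH_i]$. Substituting \eqref{Mt} and \eqref{Vt} (the denominator $\sum_m m(k_m - Y_m^{(t)})$ in \eqref{Vt} cancels against the prefactor coming from $\P(M_t=1\mid\cH_i)$), each factor equals $1-y_t$, where
\[
y_t = \sum_{j \ge 2}\frac{j(k_j - Y_j^{(t)})}{|U_i|}(1-x^{j-1}) \;\le\; \frac{k}{|U_i|} = o(1)
\]
uniformly on $\{T>i\}$ since $|U_i| \ge n - 2n^{2/3}$ and $k = o(n)$. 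Using $\log(1-y) = -y + O(y^2)$ and $\sum_t y_t^2 = O(s(k/n)^2) = O((sk/n)(k/n)) = o(1)$, the problem reduces to analysing $\sum_t y_t$. Its leading piece is $\frac{sk}{|U_i|}\sum_{j \ge 2}\frac{jk_j}{k}(1-x^{j-1})$. The prefactor $sk/|U_i|$ tends to $c$, and $(jk_j/k)_{j\ge 2}$ is a probability distribution on $\{2,3,\dots\}$ converging pointwise to the sub-probability $(k_j')_{j \ge 2}$. Writing $\sum_j \tfrac{jk_j}{k}(1-x^{j-1}) = 1 - \sum_j \tfrac{jk_j}{k}x^{j-1}$ and noting that, for $x\in(0,1)$, the summand $(jk_j/k)x^{j-1}$ is dominated by the summable $x^{j-1}$, dominated convergence gives the limit $1 - \sum_j k_j' x^{j-1}$. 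Altogether, $\sum_t y_t \to c(1-\sum_j k_j' x^{j-1}) = -\log\Psi(1-x,c)$.

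The main obstacle is to control the correction $\Delta := \sum_{j\ge 2}\frac{j(1-x^{j-1})}{|U_i|}\sum_t Y_j^{(t)}$ uniformly on $\{T>i\}$. For this I would use the structural observation that after a hyperedge is revealed, all of its vertices lie in $A_i \cup R_i$; hence by double counting,
\[
\sum_t \sum_{j\ge 2} j\,Y_j^{(t)} \;=\; \sum_{v \in A_i\cup R_i} D'(v) \;\le\; (|A_i|+i)\,\max_v D(v),
\]
where $D'(v)$ is the number of revealed hyperedges containing $v$ and $D(v)$ is the total number of hyperedges of $H_s$ containing $v$. Since the cycles of each individual $\gamma_t \in \Gamma$ are vertex-disjoint, $D(v) = \sum_{t=1}^s \mathbf{1}\{v \text{ non-fixed in } \gamma_t\}$ is a $\mathrm{Binomial}(s, k/n)$ variable with mean $sk/n \to c$, and a Chernoff plus union bound gives $\max_v D(v) \le C\log n$ off an event of vanishing probability. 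On $\{T>i\}$ one has $|A_i|+i \le 2n^{2/3}$, so $\Delta = O(n^{-1/3}\log n) = o(1)$ on a good event, which is what is required. Finally, to pass from the MGF of $\sum_t M_t(V_t-1)$ to that of $|A_{i+1}\setminus A_i|$, I would use \eqref{eq:new_active} directly for the lower bound (the inequality on exponents reverses because $x<1$), and \eqref{eq:new_active_lb} combined with $\E[I_i\mid\cH_i]\ge 1-n^{-1/2}$ together with the elementary estimate $\E[x^{y\wedge n^{1/4}}\mid\cH_i] \le \E[x^y\mid\cH_i] + x^{n^{1/4}}$ (with $x^{n^{1/4}}\to 0$) for the upper bound, producing the deterministic envelope $w(n)\to 0$ claimed.
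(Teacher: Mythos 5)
Your overall architecture matches the paper's: factor the conditional MGF over the $s$ packets, pass to the limit in each factor, and absorb the $-1$ shift into the prefactor $1/x$. Your treatment of the factors is in fact a touch cleaner than the paper's. You observe that the factor is \emph{exactly} $1-y_t$ with $y_t = \sum_j \frac{j(k_j-Y_j^{(t)})(1-x^{j-1})}{|U_i|}$ (the denominator of \eqref{Vt} cancelling against \eqref{Mt}), and then Taylor-expand $\log(1-y_t)$ using $y_t\le k/|U_i|=o(1)$ and $\sum_t y_t^2=o(1)$; the paper instead does two separate one-sided bounds using $1-x\ge e^{-x-x^2}$ and $1-x\le e^{-x}$. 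The dominated-convergence step to replace $jk_j/k$ by $k'_j$ is the same as the paper's \eqref{DCT}. The reduction from $|A_{i+1}\setminus A_i|$ to $\sum_t M_t(V_t-1)$ via \eqref{eq:new_active}--\eqref{eq:new_active_lb}, the birthday-problem estimate on $I_i$, and the $x^{n^{1/4}}$ truncation error are all as in the paper.

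The place where you depart from the paper is in controlling the correction $\Delta = \frac{1}{|U_i|}\sum_j j(1-x^{j-1})\sum_t Y_j^{(t)}$, and this is where there is a genuine gap. You bound $\sum_t\sum_j jY_j^{(t)}$ by $\bigl(|A_i|+i\bigr)\max_v D(v)$ and then appeal to a Chernoff-plus-union bound to get $\max_v D(v)=O(\log n)$. That bound holds only off an exceptional event of probability $o(1)$; it is not an almost-sure bound, and it is not measurable with respect to $\cH_i$ (the degrees $D(v)$ involve the entire hypergraph, including edges not yet revealed at stage $i$). But the quantity you are trying to bound, $\E[x^{|A_{i+1}\setminus A_i|}\mid\cH_i]$, is $\cH_i$-measurable, and the lemma requires the inequality to hold \emph{almost surely} on $\{T>i\}$, uniformly in $i$. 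A statement that fails on a positive-probability set (however small) is not the statement the lemma makes, and the supermartingale arguments that invoke this lemma downstream (in Lemma~\ref{L:theta} and Steps 1--4 of the giant-component proof) rely on the a.s. form. The paper does not use degrees at all: it appeals to the \emph{deterministic} bound $\sum_t\sum_j jY_j^{(t)}\le 2n^{2/3}$ on $\{T>i\}$ coming from the stopping time $T^\uparrow$ (its \eqref{eq:sum_Y_bound}), absorbs this directly into the exponent for the lower bound, and for the upper bound introduces the good/bad packets dichotomy (packets $t$ with $\sum_m mY_m^{(t)}>\eps_n k$) together with the bound $|I|\le n^{2/3}/(\eps_n k)$, again a consequence of the same deterministic estimate. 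You should replace your Chernoff argument by this deterministic bound; once you do, the rest of your argument goes through.
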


\begin{proof}
Suppose $T>i$.
%In particular, from the definition of $T^\uparrow$ and \eqref{eq:sum_Y}
%\nb{This is in the wrong direction now. But it doesn't look like that's what we are using anyway!}
% we have that
%	\begin{equation}\label{eq:sum_Y_bound}
%		\sum_{t=1}^s\sum_{j \geq 2} jY^{(t)}_j \leq 2n^{2/3}
%	\end{equation}
%	almost surely.
From \eqref{eq:new_active} we have that
	\begin{align*}
		 q \E[q^{a_{i+1} - a_i}| \cH_{i}] & \ge  \E[ q^{\sum_{t=1}^s M_t(V_t-1)}| \cH_i]  = \tprod_{t=1}^s \E(q^{M_t (V_t -1)} | \cH_i)\\
		 & = \tprod_{t=1}^s \Big[1 - \P(M_t =1 |\cH_i) ( 1- \E( q^{V_t - 1} | \cH_i, M_t = 1) ) \Big].
	\end{align*}
Recall from \eqref{Vt} that
\begin{align*}
\E( q^{V_t - 1} | \cH_i, M_t = 1) 	&= \tsum_{j \geq 2} q^{j-1} \tfrac{j(k_j-y^{(t)}_j)}{\sum_{m \geq 2}m(k_m-y^{(t)}_m)} \ge  \tsum_{j \geq 2} q^{j-1} \tfrac{j(k_j-y^{(t)}_j)}{k}
\end{align*}
and from \eqref{Mt} that
$$
\P(M_t =1 |\cH_i) = \tsum_{m \geq 2}\tfrac{m(k_m-y^{(t)}_m)}{|U_{i}|} \le \tsum_{m \geq 2}\tfrac{mk_m}{n-2n^{2/3}} \le \tfrac{k}{n} (1+ 3 n^{-1/3})
$$
{by definition of $T^\uparrow$.}
 Therefore, using $ 1- x \ge e^{-x - x^2 }$ for all $x$ sufficiently small,
\begin{align}
 q \E[q^{a_{i+1} - a_i}| \cH_{i}] 	& \geq \tprod_{t=1}^s\Big[1- \tfrac{k}{n} (1+ 3 n^{-1/3}) \Big(1- \tsum_{j \geq 2} q^{j-1} \tfrac{j(k_j-y^{(t)}_j)}{k}\Big)\Big]  \label{offspring} \\
		&\geq \tprod_{t=1}^s\Big\{1-  \tfrac{k}{n}\Big( 1 + 3 n^{-1/3}- \tsum_{j \geq 2} q^{j-1}   \alpha_j \Big)-\tfrac{k}{n}{(1+ 3n^{-1/3})} \tsum_{j \geq 2} q^{j-1} \tfrac{j y^{(t)}_j}{k} \Big\} \nonumber\\
		& \geq \tprod_{t=1}^s\Big\{ 1- \tfrac{k}{n}\Big( 1- \tsum_{j \geq 2} q^{j-1} \alpha_j \Big)- 3n^{-1/3} \tfrac{k}n  - \tfrac1n {(1+ 3n^{-1/3})} \tsum_{j \ge 2} j y^{(t)}_j \Big\}  \nonumber\\
		& \geq \exp \Big\{ - s  \tfrac{k}{n}\Big( 1- \tsum_{j \geq 2} q^{j-1}\alpha_j   \Big)  - O( n^{-1/3})\tfrac{sk}{n} - O( s\tfrac{k^2 }{ n^2})  \Big\}. \nonumber
	\end{align}
Hence, {since $sk/n = O(1)$ in the regime we are concerned with,}
%\nb{And here we have again used that $\sum j Y_j \le n^{2/3}$ which is problematic.}
$$
 q \E[q^{a_{i+1} - a_i}| \cH_{i}] \ge \psi(1-q,c) (1 + o(1)) \ge \psi(1-q, c) + o(1)
$$
where the $o(1)$ term is non random and independent of $i$, and for the last inequality we have used that
	\begin{equation}\label{DCT}
	\exp ( - c \tsum_j q^{j-1} \alpha_j) \to \exp ( - c \tsum_j q^{j-1} \bar \alpha_j)
	\end{equation}
 which follows from the fact that $q \le q_0 <1$ and the dominated convergence theorem, as $ jk_j/k$ is uniformly bounded by $1$. Note that the above estimate is uniform in $i \geq 1$.
 \begin{comment}
 When $x = x_n$ is as in \eqref{xnfixedpoint} we do not need to appeal to \eqref{DCT} and we get
 $$
 x\E( x^{a_{i+1} - a_i} | \cH_i) \ge  \exp \Big\{ - s  \tfrac{k}{n}\Big( 1- \tsum_{j \geq 2} x^{j-1}\tfrac{j k_j}{k}   \Big) \Big\} (1+ O( n^{-1/6})) = x( 1+ O( n^{-1/6}))
 $$
 by definition of $x_n$. \nb{Not sure about this. There is a term which is small just because $k/n \to 0$ and so the $o(1)$ should reflect this. DOn't think you get polynomial rate here!}Making the cancellation, we get the desired lower bound.
\end{comment}

	For the upper bound, we use \eqref{eq:new_active_lb}. Let $\epsilon_n \to 0$ sufficiently slowly that $\eps_n n^{1/3}\to \infty$. For concreteness take $\eps_n = n^{-1/6}$.
	Define
	$G:=\{t\in\{1,\dots,s\}: \sum_{m\geq 2} m y^{(t)}_m \le \epsilon_n k\},$ and let $I = G^c$.  Packets $t \in I$ are the bad packets for which a significant fraction of the mass corresponding to that packet (at least $\eps_n$) was already discovered at step $i$; by contrast packets $t \in G$ are those for which a fraction at least $(1- \eps_n)$ remains to be discovered in the exploration. In the case where the conjugacy class contains only one type of cycles, say $k$-cycles, then $I$ coincides with the set of hyperedges already revealed. At the other end of the spectrum, when the conjugacy class $\Gamma$ is broken down into many small cycles, then $I$ is likely to be empty. But in all cases, $|I|$ satisfies the trivial bound
	$
	|I| \leq \tfrac{2n^{2/3}}{\eps_n k}
	$
by {definition of $T^\uparrow$}, and in particular
\begin{equation}\label{bad}
\tfrac{k |I| }{n } \le \tfrac{2}{\eps_n n^{1/3} } \le 2n^{-1/6} \to 0.
\end{equation}
 This turns out to be enough for our purposes.

 Note that $\E( q^{ \sum_{t=1}^s M_t (V_t -1 )} ) $ and $\E( q^{ n^{1/4} \wedge \sum_{t=1}^s M_t (V_t -1 )} ) $ can only differ by at most $q^{n^{1/4}}$, which is exponentially small in $n^{1/4}$ for a fixed $q\le q_0 <1$, so we can neglect this difference.
 Then we may write, counting only hyper edges from good packets, using the fact that $1- x \le e^{-x}$ for all $x\in \mathbb{R}$, and \eqref{bad} (recalling that $I_i$ is the indicator of the event that no self-intersection occurs among the first $n^{1/4}$ vertices connected to $w$):
	\begin{align}
		q \E[q^{a_{i+1} - a_i} |  \cH_i]& \leq 1- \E(I_i) + \E(I_i) \Big(q^{n^{1/4}} + \tprod_{t=1}^s\Big[1-\tfrac{k-\sum_{m \geq 2}my^{(t)}_m}{n}\Big(1- \tsum_{j \geq 2} q^{j-1} \tfrac{j(k_j-y^{(t)}_j)}{k-\sum_{m \geq 2}my^{(t)}_m}\Big)\Big] \Big) \nonumber \\
		&\leq  2n^{-1/2} + q^{n^{1/4}} + \tprod_{t\in G}\Big[1-\tfrac{k}{n}(1- \eps_n) \Big(1- \tsum_{j \geq 2} q^{j-1} \tfrac{jk_j}{k(1- \eps_n)}\Big) \Big] \nonumber \\
		& \leq o(1) + \exp\Big\{-s\tfrac{k}{n}(1- \eps_n)   + \tfrac{k}{n} |I| (1- \eps_n)  + s \tfrac{k}{n} \tsum_{j \geq 2} q^{j-1} \alpha_j \Big\} \nonumber \\
		&= o(1) +  \exp\Big\{-s\tfrac{k}{n} + \tfrac{sk}{n}\tsum_{j \geq 2} q^{j-1} \alpha_j \Big\}(1+ o(1) ) \label{control}\\
		& \leq \psi(1-q,c)+o(1)\nonumber
	\end{align}
 where the $o(1)$ term again is non random and uniform in $i\ge 1$, but might depend on $q$ (the last inequality again from comes from \eqref{DCT}).
 %%%% \begin{comment}
 %Again, in the case that $x=x_n $ is as in \eqref{xnfixedpoint} then we get
 % $x \E[x^{a_{i+1} - a_i} |  \cH_i] \le O(n^{-1/2}) + x (1 + O( n^{-1/6}))$ and so the result in this case follows, since %$x_n$ is bounded away from zero.
%%%%  \end{comment}
 The proof is complete.
	\end{proof}

Lemma \ref{lemma:A_mgf} above tells us that, at the level of generating functions, the distribution of $a_{i+1} - a_i$ behaves very much like a sequence of i.i.d. random variables with distribution determined by $\psi$, even if we don't ignore self-intersections. It is thus easy to build martingales from quantities of the form $q^{a_i}$, which behave as if the increments of $a_i$ were i.i.d., at least until we reach size $n^{2/3}$. Hence this will allow us to reach a size of $n^{2/3}$ for $a_i$ almost as if there were no self-intersections, and so with probability approximately $\theta(c)$. Fundamentally, this is because even if self-intersections do occur, they are rare and do not cause a significant loss of mass. Technically, it is easier to have a separate argument for bringing the cluster to a polylogarithmic size before using this information to show that the cluster reaches size $n^{2/3}$ with essentially the same probability. This is what we achieve in Step 1, which we are now ready for.

\medskip \noindent \textbf{Step 1.} We show that the cluster containing a given vertex $v$ is at least logarithmically large with probability approximately $\theta(c)$, and furthermore the number of vertices for which this occurs is approximately $n \theta(c)$ in the sense of convergence in probability.
\begin{lemma}\label{L:theta}
Let $\cC_v$ denote the component containing $v$. We have that
\begin{equation}\label{Egiant}
\lim_{n \to \infty}\P( |\cC_v| >  (\log n)^2) = \theta(c).
\end{equation}
\end{lemma}

\begin{proof}
\begin{comment}
  We start with the lower bound of \eqref{Egiant}. For simplicity write $\theta=\theta(c)$, and we can assume without loss of generality that $\theta>0$ else there is nothing to prove.
From Lemma \ref{lemma:A_mgf} we see that if $x_n$ is as in \eqref{xnfixedpoint}
$$
\E( x_n^{a_{i+1} - a_i } | \cH_i) \le  1 + Cn^{-1/6},
$$
uniformly in $i \le T$, where the constant $C$ is nonrandom and uniform.
%\nb{Well, it is less than 1 so the above is a triviality!}
Consequently,
$$
M_i = x_n^{a_i }(1 + C n^{-1/6})^{-i}, i = 1, \ldots, T,
$$
forms a supermartingale.
%\nb{A general comment. Our estimates were only uniform in $x$ in a compact of $[0,1)$. Since we apply this to $x = 1- \theta$ essentially, we need a separate argument in the case where $\theta = 0$ I think...}
Let $T_r = \inf\{ i \ge 0:a_i\ge r \}.$ Note that if $T_r < T^\downarrow$ then $T^\downarrow > r$. Thus if $T^\downarrow \le r$, then also $T^\downarrow\le  T_r$.  Hence, applying the optional stopping theorem at time $r \wedge T_r \wedge T^\downarrow$,
\begin{align*}
M_{r \wedge T_r \wedge T^\downarrow}  & \ge M_{T^\downarrow} 1_{\{T^\downarrow \le r\}}\\
& \ge (1+ Cn^{-1/6})^{-r} 1_{\{T^\downarrow \le  r\}}.
\end{align*}
Hence
$$
\P( T^\downarrow < r) \le (1+ C n^{-1/6})^r \E( M_{r \wedge T_r \wedge T^\downarrow} ) \le \E(M_0) = x_n.
$$
Taking $r = \beta \log n$, and recalling that $x_n \to 1- \theta$, we deduce that
$$
\limsup_{n\rightarrow\infty} \P(T^\downarrow < \beta \log n) \le 1- \theta
$$
from which the lower bound of \eqref{Egiant} follows by taking the complement.
\end{comment}

We start with the upper bound of \eqref{Egiant}, for which we simply make a comparison with a Galton--Watson process: to reach size $\log n$ the exploration process must survive more than a finite number of steps. More precisely, we make the following observation. Let $m \ge 1$ be some arbitrary fixed large integer, and observe $\P( |\cC_v| >  (\log n)^2) \le \P( |\cC_v|  \ge m)$ trivially. Now, whether $|\cC_v|$ reaches size $m$ is something that can be decided by performing the breadth-first search exploration of the cluster on a finite (at most $m$) number of steps: i.e., if we let $X_{i+1} = | A_{i+1} \setminus A_i | $, %let $S_i = \sum_{j \le i } X_i$ and $H = \inf \{ i\ge 0 : S_n = 0\}$. Then it follows that
%$$
%\limsup_{n \to \infty}  \P( T^\downarrow > \beta \log n) \le\limsup_{m \to \infty} \limsup_{n \to \infty} \P( H > m).
%$$
then a direct and crude consequence of Lemma \ref{lemma:A_mgf} is that $(X_1, \ldots, X_m)$ converge to i.i.d. random variables $(\bar X_1, \ldots, \bar X_m)$ (which are possibly improper, if $\sum \bar \alpha_j < 1$)  having as generating function $\E(q^{\bar X}) = \psi(1- q, c) $. Formally, the $\bar X_i$ have the same distribution as
$$
\bar X  = \Big( \tsum_j (j-1) \text{ Poisson } ( c \bar \alpha_j) \Big)+ \infty \cdot \text{ Poisson } ( c(1- \tsum_j \bar \alpha_j))
 $$
 where the Poisson random variables are independent. Consequently, if $W$ is the total progeny of a Galton--Watson branching process with offspring distribution $\bar X_i$ (note in particular that $W = \infty$ as soon as one nodes in the tree has offspring $\bar X_i = \infty$). We conclude that $\P( |\cC_v| \ge m) \to \P( W\ge m)$, and hence, taking the limsup and letting $m \to \infty$,
  $$
 \limsup_{n \to \infty} \P( T^\downarrow \ge (\log n)^2) \le \P( W  = \infty) = \theta(c).
 $$
This proves the upper bound in \eqref{Egiant}.
 %Let  $S_i = \sum_{j \le i } Y_i$ and $H = \inf \{ i\ge 0 : S_i = 0\}$. Then clearly for all $m$,
%$$
%\lim_{m \to \infty} \P( T^\downarrow \ge m) = \P( H\ge m)
%$$
%and thus
%$$
%\limsup_{n \to \infty}  \P( T^\downarrow > \beta \log n) \le \limsup_{m \to \infty}  \P( H > m).
%$$
%On the other hand the right hand side is easily shown, by standard random walk theory, to equal $\theta$. Thus the upper bound of \eqref{Egiant} follows.

We now discuss the lower bound to \eqref{Egiant}, which is essentially the same argument, together with the observation that self-intersections are unlikely to occur before $(\log n)^2$ vertices have been explored. For this we can assume without loss of generality that $\theta(c)>0$, otherwise there is nothing to prove. Let
$$
T_1 = \inf\{i\ge 1: a_i > (\log n)^2\};
$$
we will prove the slightly stronger result that $\liminf_{n \to \infty} \P( T_1 < T_\downarrow) \ge \theta(c)$. (This is slightly stronger, because $|\cC_v|$ could in principle be greater than $(\log n)^2$ without the active set ever reaching that size).
Let $X_i$ be i.i.d. random variables with generating function given by
\begin{equation}\label{offspring2}
\psi_n(q) =  \E( q^{X_i}) = \tprod_{t=1}^s ( 1 - \tfrac{k}{n} (1 - \tsum_j q^{j-1} \alpha_j ) ),
\end{equation}
so that, by \eqref{Vt}, $a_{1} - a_0 $ has the same distribution as $X_1$ when $A_0  = \{v\}$ (see e.g. \eqref{offspring} where a similar calculation is carried). We can use the random variables $X_i$ to generate the breadth first exploration of $\cC_v$ until we find a self-intersection. Thus let $\tilde Y_i$ be a collection of randomly chosen vertices of $\{1, \ldots, n\}$ of size $X_i$, and at each time step, add to the active set $\tilde A_{i+1}$ the set $\tilde Y_i$ and remove the currently explored vertex. Then we can couple $A_i$ and $\tilde A_i$ so that $A_i = \tilde A_i$ until the first time $T_{\text{inter}}$ such that $\tilde Y_i \cap (\tilde Y_j \cup \{v\}) \neq \emptyset$ for some $i \neq j \le T_{\text{inter}}$. Furthermore, until $T_{\text{inter}}$, $\tilde A_i$ is the breadth-first search exploration of a branching process with offspring distribution  \eqref{offspring2}. It becomes extinct with a probability $q_n$, and we claim that $q_n$ satisfies $q_n \to 1- \theta(c)$ as $n \to \infty$ by \eqref{DCT}. Indeed, $\psi_n$ clearly converges uniformly to $\psi(\cdot, c)$ on $[0, x_0]$ for $x_0 <1$ by Lemma \ref{lemma:A_mgf} and this is the regime we are interested in since by assumption $\theta(c)>0$.

Hence, it is clear that if $W_n$ is the total progeny of this branching process, then $\P( W_n \ge (\log n)^2)  \ge \P( W_n = \infty) = 1- q_n \to \theta(c)$, and combining with the argument in the upper bound on \eqref{Egiant} we deduce that $\P( W_n \ge (\log n)^2) \to \theta(c)$. On the other hand, $T_1 < T_{\text{inter}}$ with probability tending to 1 as $n \to \infty$ by the birthday problem, and so in fact $\P( T_1 < T_\downarrow) = \P( W_n \ge (\log n)^2 ) + o(1)$, so we are done.
\end{proof}

It is important to note that self-intersections may occur at the very step that $a_i$ exceeds $(\log n)^2$ (for instance, think about the case when the conjugacy class has some of its mass coming from cycles larger than $n^{1/2}$: discovering such a cycle would immediately produce a self-intersection). Even so, the active set reaches size $(\log n)^2$ before such a self-intersection is discovered.

\medskip As announced at the beginning of Step 1, we complement this with a law of large numbers:
\begin{lemma}
\begin{equation}\label{lln_giant}
	\tfrac1n | \{ v : | \cC_v| \ge (\log n)^2 \} |\to \theta(c)
\end{equation}
in probability as $n \rightarrow \infty$.
\end{lemma}

\begin{proof}
Let $Z = \sum_{v = 1}^n 1_{\{| \cC_v | \ge (\log n)^2 \}}$, so by the previous lemma we know that $\E(Z) / n \to \theta$ by \eqref{Egiant}. Hence if we show that $\var (Z) \le \eps n^2$ for any $\eps>0$ and any $n$ sufficiently large, then \eqref{lln_giant} follows by Chebyshev's inequality. In particular, it suffices to show that for $v \neq w \in \{1, \ldots, n \}$,
$$
\limsup_{n \to \infty} \text{Cov}( 1_{\{| \cC_v | \ge (\log n)^2 \}}, 1_{\{| \cC_w | \ge (\log n)^2 \}} ) \le  0
$$
or equivalently,
\begin{equation}\label{cov}
\limsup_{n \to \infty} \P( | \cC_v | \ge (\log n)^2 , | \cC_w | \ge (\log n)^2 ) \le \theta(c)^2.
\end{equation}
On the other hand, \eqref{cov} can be proved in exactly the same way as the upper bound of \eqref{Egiant} above: for both $| \cC_v |$ and $|\cC_w|$ to be larger than $(\log n)^2 $, both must be greater than $m$ where $m \ge 1$ is fixed. This is an event which depends on a finite number of steps  (at most $2m$) in the explorations of $\cC_v$ and $\cC_w$, and so can be approximated by Lemma \ref{lemma:A_mgf} by the same event for two independent branching processes. Letting $m \to \infty$ finishes the proof.
\end{proof}

For the rest of the proof we now assume that $c> c_\Gamma$ so that $\theta(c) >0$. Hence fix $q \in [0,1)$ such that $\psi(1-q, c)/ q < 1$, and note that using Lemma \ref{lemma:A_mgf}, we can suppose that, for some fixed $\epsilon >0$, $n$ is large enough so that
\begin{equation}\label{x}
			\E[q^{a_{i+1} - a_i} | \cH_{i}] \leq (1+ \epsilon)^{-1}
\end{equation}
		almost surely on $\{T>i\}$.

\medskip\noindent
\textbf{Step 2.}		We now extrapolate the information obtained in the previous step to show that, still with probability approximately $\theta(c)$, the active set of $\cC_v$ can reach a size of at least $O(n^{2/3})$. To do so we suppose our exploration from Step 1 yields an active set of size at least $(\log n)^2$ (which, as discussed, occurs with probability $\theta(c) + o(1)$. We will restart the exploration from that point on, calling this time $i=0$ again. Hence the setup is the same as before, except that at time $i=0$ we have $a_0 = \lfloor (\log n)^2 \rfloor $: we only keep the first $(\log n)^2$ of the active vertices discovered at time $T_1$, and declare all further active vertices at time $T_1$ to be removed at time $i=0$ in the exploration of Step 2.

\begin{comment}
We fix $x$ as in \eqref{x} and let $r=(\log n)^2$ for some constant $\beta>0$ to be chosen later. If $T^\uparrow < r $ there is nothing to do so we may assume that $T = T^{\downarrow} \wedge T^{\uparrow}\ge r$.
Hence observe that
				\begin{align*}
			\P(T>r, 0<a_r\leq C \log n) &\leq \P(x^{a_r} \geq x^{C \log n}; T\ge r) \\
			&\leq {x^{-C \log n}} {\E[x^{a_r}\1_{\{T\ge r\}}]}\\
%			
%&= x^{-C\log n +1}\E\left[ \prod_{i=0}^{r-1} \E[x^{|A_{i+1} \backslash A_{i}|}\1_{\{T>i\}} | \cH_i]\right]\\
			& \leq x^{-C\log n +1} (1+\epsilon)^{-(\log n)^2}.
		\end{align*}
		Thus we can choose $\beta>0$ and $C$ suitably so that
$$\P(T>r, 0<a_r\leq C\log n) \leq n^{-3}.$$
\end{comment}

Recall our notations for $T^\downarrow$ and $T^\uparrow$ in \eqref{Tdown} and \eqref{Tup}. Our goal in this step is to show the following control:

\begin{lemma}
suppose that given $\cH_0$, it is a.s. the case that $a_0 = \lfloor (\log n)^2 \rfloor$, and $r_0 \le n^{2/3}$.Then
\begin{equation}\label{step2}
\P( T^\downarrow  < n^{2/3} \wedge T^\uparrow| \cH_0) = O(q^{(\log n)^2}) = o(n^{-1}).
\end{equation}
 \end{lemma}
\begin{proof}
Set $S =n^{2/3} \wedge T^\uparrow \wedge T^\downarrow$ and for $i \geq 0$, let
$$M_i:= q^{a_{i\wedge S}}(1+\epsilon)^{i\wedge S },$$
so $M=(M_i:i=0,\dots )$ is a supermartingale in the filtration $(\cH_0, \cH_{1}, \ldots)$. Observe that $S \le n^{2/3}$ so $M$ is bounded. Note that on the event $\{ S = T^\downarrow\}$,
$$
M_T= (1+ \eps)^{T^\downarrow } \ge 1_{\{ S = T^\downarrow\}}
$$
hence by the optional stopping theorem (since $M$ is bounded), given $\cH_0$ and under the assumptions of the lemma on $\cH_0$,
\begin{align*}
\P( S  = T^\downarrow | \cH_0) & \le \E( M_S  1_{\{ S = T^\downarrow\}} | \cH_0) \\
& \le M_0 =q^{a_0} \le q^{(\log n)^2 -1},
\end{align*}
as desired.
%We deduce that
%\begin{align}
%\P(S = T^\downarrow ;  T>r  ) & \le \E(x^{C \log n} ; |A_r|>0)  + \P( T>r; 0< %|A_r| < C \log n) \nonumber \\
%& \le x^{C \log n}   + n^{-3} \le 2n^{-3}. \label{eq:hit_no_zero}
%\end{align}
\end{proof}

Consequently, since the error bound in Lemma \ref{step2} is $o(n^{-1})$, we deduce that if
\begin{equation}\label{tildeG}
\cG = \{ v: |\cC_v(s)| > n^{2/3} \}, \tilde \cG = \{ v: |\cC_v(s) | > (\log n)^2 \},
\end{equation}
then $\cG = \tilde \cG$ with high probability, and hence in particular
\begin{equation} \label{eq:step2}
\frac{|\cG|}{n} \to \theta(c)
\end{equation}
in probability as $n \to \infty$.

\medskip \textbf{Step 3.}	
\begin{comment}
Combining this with \eqref{eq:hit_no_zero} we get
		\begin{align}
			\P\left( a_{T\wedge n^{2/3}} \le Kn^{2/3} ;T>r\right)
%& \le  \P ( |A_{n^{2/3}} | = 0 ; T>r ) +  \P(|A_{n^{2/3}}| \leq K n^{2/3} ; T > n^{2/3}  ) \nonumber \\
%			& \le \P( T = T^\downarrow ; T>r ) + n^{-3}\nonumber \\
			& \le C n^{-3}.  \label{eq:seperate}
		\end{align}
%In particular,
%\begin{equation}
%
%\P( |A_{T \wedge n^{2/3}} | \le Kn^{2/3}; T^\downarrow >r ) \le  3n^{-3}.
%\end{equation}

Suppose $v$ is a vertex and that $|\cC_v| > (\log n)^2 = r$. Then observe that $T^\downarrow > r$. Accordingly, it is likely that $|A_{T\wedge n^{2/3}} | \ge Kn^{2/3}$ by \eqref{eq:seperate}. If $v'$ is another vertex and we assume that $|\cC_{v'}| > r$, we may likewise explore its component. We seek to show that $v$ and $v'$ are likely to be connected. As we explore $\cC_{v'}$ we may find a connection from $\cC_{v'}$ to $\cC_v$ before time $T\wedge n^{2/3}$ (in the exploration of $\cC_{v'}$) in which case we are done. Else, we can repeat the argument above and show that it is likely that the active vertex set of $\cC_{v'}$ also reaches $Kn^{2/3}$, at a time $T' \wedge n^{2/3}$, with obvious notations. Let $A = A_{T\wedge n^{2/3}}$ (resp. $A' = A'_{T' \wedge n^{2/3}}$) denote the active vertex set of $\cC_v$ (resp. $\cC_{v'}$) at time $T\wedge n^{2/3}$ (resp. $T' \wedge n^{2/3}$). Hence we may assume that $A \cap A' = \emptyset$ and $|A|, |A'| \ge K n^{2/3}$.
\end{comment}
We now show that if $v$ and $v'$ are two vertices such that $\cC_v = \cC_v(s)$ and $\cC_{v'} = \cC_{v'}(s)$ are both larger at time $s$ than $n^{2/3}$ then they are highly likely to be connected at some slightly later time $s+s'$.
This follows from a so-called ``sprinkling'' argument, as follows.
That is, suppose we add $s'$ packets, with
$$
s' = \left\lceil \frac{ D n^{2/3} \log n}{k} \right\rceil
$$
for some $D>0$ to be chosen later on.
 Note that $s'k/n \to 0$ so that $ (s+ s') k /n \to c$. Since $s = s(n)$ is an arbitrary sequence such that $s k / n \to c$ it suffices to show that $v$ and $v'$ are then connected at time $s+ s'$. In fact we will check that the two clusters can be connected using smaller edges that the hyperedges making each packet, as follows. For each hyperedge of size $j$ we will only reveal a subset of $\lfloor j/2 \rfloor $ edges (of size 2) with disjoint support.
 Since $\lfloor j/2 \rfloor \ge j /3$ for any $j \ge 2$, this gives us at least $k/3$ edges
 for each packet; these are sampled uniformly at random without replacement from $\{1, \ldots, n\}$. We will check that a connection occurs between the two clusters within these $s' k /3 $ edges, with high probability.

Call the two clusters $A$ and $A'$ for simplicity; these are two arbitrary sets of size $\lfloor n^{2/3}\rfloor $ which we can assume to be disjoint otherwise there is nothing to prove.  Call a packet of edges good if their intersections with each of $A$ and $A'$ contains at most $\lfloor n^{2/3} /2 \rfloor $ vertices, and call it bad otherwise. We reveal the edges in a given packet one by one, sampling without replacement. Note that so long as packet of edges has not been observed to be bad, the probability that the next edge connects $A$ and $A'$ is at least $n^{4/3}/(16(n-k)^2) \ge n^{-2/3}/32$. (Note that if $k \le n^{2/3} /2$ then every packet is necessarily good). Hence the probability that no connection between $A$ and $A'$ occurs for a good packet is at most
$$
(1- n^{-2/3}/32)^{k/3} \le \exp ( - \frac{kn^{-2/3}}{96}).
$$
Now, each packet is bad independently of each other, with probability tending to 0 by Markov's inequality (since the expected intersection of a pack of edges with $A$ is at most $|A| k /n = o (|A|)$) and hence less than $1/2$ say. So by standard Chernoff bounds on Binomial random variables, with probability at least $1-\exp( - h \times s')$ (where $h>0 = (1/4) \log (1/4) + (3/4) \log (3/4) + \log 2 $ is a universal constant), at least $s'/4$ packs are good. Putting together these two observations, we deduce that the probability that there are no connections between $A$ and $A'$ after $s'$ packs of edges have been added is at most
\begin{equation}\label{eq:step3}
\exp ( - \frac{kn^{-2/3}}{96} \frac{s'}{4}) + \exp ( - h s') \le \exp ( - \frac{D}{400} \log n) + \exp ( - h s') ,
\end{equation}
By choosing $D = 1201$, this is $o(n^{-3})$ at least if $k \ge n^{2/3}/2$ (so that $s' \ge 2 D \log n$). However, if $k \le n^{2/3}$, then every packet is good, and so \eqref{eq:step3} holds without the second term on the right hand side. Either way,
\begin{equation}\label{eq:step3concl}
\P( \cC_v(s+s') \cap \cC_{v'}(s+ s') = \emptyset ) = o(n^{-3}).
\end{equation}

\begin{proof}[Proof of Theorem \ref{lemma:hyper_k_cycle}]
 We are now ready to conclude that vertices are either in small component at time $s$ or connected at time $s+ s'$. Recall our notation $\mathcal{G} = \{ v : |\cC_v(s)| > n^{2/3}\}$. Then by \eqref{eq:step2}, we know that $|\cG|/n\to \theta(c)$ in probability as $n \to \infty$. We now aim to show that $\cG$ is connected at time $s+s'$, with high probability. For $v, v' \in \{1,\dots,n\}$, write $v \leftrightarrow v'$ to indicate that $v$ is connected to $v'$. Then by Step 3 (more specifically, \eqref{eq:step3concl}),
 \[
 \P \left( \left. \bigcup_{v,v' \in \cG} \{v \leftrightarrow v' \text{  at time $s+s'$}\}^c \right| H_s \right) \le |\cG|^2 o(n^{-3}) \le o(n^{-1}).
		\]
Hence $\cG$ is entirely connected at time $s+s'$ with probability tending to 1. This proves that $H_{s+s'}$ contains a component of relative size  converging to $\theta(c)$ in probability. Let us now check that every other component at time $s+s'$ is small. Note that since $\cG = \tilde \cG$ with probability tending to one (where $\tilde \cG$ is defined in \eqref{tildeG}), any component disjoint from $\cG$ at time $s+s'$ must have been smaller than $(\log n)^2$ at time $s$. Since at most $s'k$ connections are added, this means that, on the event $\cG = \tilde \cG$, the maximal size of a component at time $s+s'$ disjoint from $\cG$ is smaller than $s' k (\log n)^2 \le D n^{2/3} (\log n)^3$. This shows that every other component is $O(n^{2/3} ( \log n)^3)$ on an event of high probability.

The proof of Theorem
\ref{lemma:hyper_k_cycle} is complete, since $s+s'$ in an arbitrary sequence such that $(s+s')k/n \to c$.
\end{proof}

  \subsection{Poisson--Dirichlet structure}
  The renormalised cycle lengths $\X(\sigma)$ of a permutation $\sigma\in \Sn$ is the cycle lengths of $\sigma$ divided by $n$, written in decreasing order. In particular we have that $\X(\sigma)$ takes values in
  \begin{equation}\label{Omega}
    \Omega_\infty:=\{\mathbf (x_1 \geq x_2 \geq \dots ): x_i \in [0,1] \text{ for each }i \geq 1 \text{ and } \sum_{i=1}^\infty x_i =1  \}.
  \end{equation}
  We equip $\Omega_\infty$ with the topology of pointwise convergence. If $\sigma_n$ is uniformly distributed in $\Sn$ then $\X(\sigma_n) \rightarrow Z$ in distribution as $n \rightarrow \infty$ where $Z$ is known as a Poisson--Dirichlet random variable. It can be constructed as follows. Let $U_1,U_2,\dots$ be i.i.d. uniform random variables on $[0,1]$. Let $Z^*_1=U_1$ and inductively for $i \geq 2$ set $Z^*_i=U_i(1-\sum_{j=1}^{i-1} Z^*_j)$. Then $(Z^*_1,Z^*_2,\dots)$ can be ordered in decreasing size and the random variable $Z$ has the same law as $(Z^*_1,Z^*_2,\dots)$ ordered by decreasing size.

  The next result is a generalisation of Theorem 1.1 in \cite{schramm_transpo} to the case of general conjugacy classes. The proof is a simple adaptation of the proof of Schramm and we provide  the details in an appendix.

  \begin{thm}\label{thm:general_schramm}
    Suppose $s=s(n)$ is such that $sk/n \rightarrow c$ as $n \rightarrow \infty$ for some $c>c_\Gamma$. Then we have that for any $m\in \N$
    \[
      \left(\frac{\X_1(X_s)}{\theta(c)}, \dots, \frac{\X_m(X_s)}{\theta(c)}\right) \rightarrow (Z_1,\dots,Z_m)
    \]
    in distribution as $n \rightarrow \infty$ where $Z=(Z_1,Z_2,\dots)$ is a Poisson--Dirichlet random variable.
  \end{thm}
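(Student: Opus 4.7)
The plan is to follow Schramm's original argument for random transpositions, recast in the hypergraph language developed in Section 3 so that it accommodates a general conjugacy class $\Gamma$.

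First, by Theorem \ref{lemma:hyper_k_cycle}, with probability tending to $1$ the hypergraph $H_s$ has a unique giant component $\mathcal{G}$ of size $(\theta(c)+o(1))n$, while every other component has at most $\beta\log n$ vertices. The key structural observation is that every cycle of $X_s$ is contained in a single component of $H_s$: the hypergraph records exactly which vertices have been ``mixed together'' by applied cyclic permutations, so a cycle of $X_s$ cannot cross component boundaries. In particular, the $m$ largest cycles of $X_s$ all lie in $\mathcal{G}$ for $n$ large, since cycles coming from the small components contribute lengths that are $O(\log n) = o(n)$ after normalisation.

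The core of the proof is to show that, conditionally on $\mathcal{G}$ (and on the other components), the restriction $X_s|_{\mathcal{G}}$ is close in total variation to a uniformly random permutation of $\mathcal{G}$ (up to an unavoidable sign/parity constraint that does not affect the large-cycle statistics). Once this is established, one invokes the classical fact that a uniformly random permutation of a set of size $N$ has renormalised cycle lengths converging to Poisson--Dirichlet; combined with $|\mathcal{G}|/n \to \theta(c)$ this yields
\[
\left(\frac{\X_1(X_s)}{\theta(c)},\dots,\frac{\X_m(X_s)}{\theta(c)}\right) \to (Z_1,\dots,Z_m)
\]
in distribution. To prove the uniform-within-the-giant claim, we adapt Schramm's sprinkling/exchangeability argument. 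One reveals the hyperedges in two batches: a first batch of $s - s'$ packets that, by Theorem \ref{lemma:hyper_k_cycle}, already produces a near-giant component $\mathcal{G}_0$ of size $(\theta(c)+o(1))n$; then a second ``sprinkling'' batch of $s' = \lceil Dn^{2/3}\log n / k\rceil$ extra packets which, by the argument of Step 4 in the proof of Theorem \ref{lemma:hyper_k_cycle}, both attach all vertices of $\mathcal{G}_0$ to a single component and internally ``mix'' the permutation on $\mathcal{G}$. Conditioning on the unordered multiset of hyperedges in each packet and applying Lemma \ref{lemma:transpo_end} repeatedly, one sees that within each packet the actual cyclic permutation applied is uniform over the corresponding conjugacy class, so $X_s|_{\mathcal{G}}$ equals a product of conditionally independent cycles acting on $\mathcal{G}$; a standard mixing estimate on $\mathcal{S}_{\mathcal{G}}$ using these $s'k/2$ additional edges shows that this product is within $o(1)$ total variation of uniform on $\mathcal{S}_{\mathcal{G}}$ (or $\mathcal{A}_{\mathcal{G}}$ in the parity-constrained case), which is enough since $s' k /n \to 0$ and so does not alter the ratio $sk/n \to c$.

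The main obstacle is the last mixing step. In Schramm's setting the underlying graph is plain Erd\H{o}s--R\'enyi and the relevant symmetry argument is very clean. Here, because $\Gamma$ may contain cycles of various (and possibly mesoscopic) sizes, the hyperedges arrive in strongly correlated packets (see Remark \ref{R:giant}), and one has to verify that the exchangeability arguments still apply packet by packet, and that the sprinkling produces enough genuinely independent connections inside $\mathcal{G}$ to mix the permutation. The hypergraph estimates from Section 3 (in particular the lower bound $|A_{T\wedge n^{2/3}}| \ge K n^{2/3}$ obtained in the proof of Theorem \ref{lemma:hyper_k_cycle}) are exactly what is needed to control this step; it is at this point that $c > c_\Gamma$ is used essentially, guaranteeing a genuine giant to mix on.
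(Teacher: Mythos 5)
The observations about hypergraph components are fine: cycles of $X_s$ cannot cross components of $H_s$, the $m$ largest cycles lie in the giant component $\mathcal G$ with high probability, and $|\mathcal G|/n \to \theta(c)$. But the core reduction you propose --- that the restriction $X_s|_{\mathcal G}$ is within $o(1)$ total variation of a uniform permutation of $\mathcal G$, from which the Poisson--Dirichlet limit would follow by classical results --- is both too strong and false, and the sprinkling step cannot possibly deliver it. To mix a random-transposition-type walk on a set of size $|\mathcal G| \sim \theta(c) n$ to uniform one needs on the order of $|\mathcal G|\log|\mathcal G| \sim n\log n$ transpositions. But $X_s$ is built from only about $s\rho \sim cn$ transpositions in total, and the sprinkled batch contributes only $s'k/2 \sim n^{2/3}\log n$ more, which is negligible by comparison. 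For $c$ just above $c_\Gamma$ the gap is especially glaring: the number of transpositions landing inside the small giant is $O((c-c_\Gamma)|\mathcal G|)$, so a typical vertex of $\mathcal G$ has been touched only $O(1)$ times and the restricted permutation is manifestly far from uniform. The ``exchangeability within packets'' and Lemma~\ref{lemma:transpo_end} do not help here either: conditional uniformity within each packet makes each packet's cycle uniform on its support, but the product over $s$ packets explores a set of permutations with entropy far below $\log |\mathcal G|!$, so one cannot conclude TV-closeness to uniform.

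The whole point of Schramm's theorem, and of the paper's adaptation of it, is that the Poisson--Dirichlet structure of the \emph{largest} cycles emerges far before the permutation mixes. The paper therefore does not attempt to prove anything about the permutation itself being uniform. Instead it works with the renormalised cycle-length process on the transposition time scale, colours entries according to whether they lie in the giant, uses Lemma~\ref{lemma:schramm_key} (that a constant number of cycles cover $1-\epsilon$ of the giant), passes to auxiliary processes $\bar X', \bar X''$ that ignore the small components and use uniform markers, and then runs a direct coupling between $\bar X''$ and an independent chain started from the Poisson--Dirichlet distribution (which is stationary for the marker dynamics). The coupling succeeds within $\Delta = O(\epsilon^{-1})$ transposition steps with probability tending to $1$, which gives the distributional limit for the top $m$ cycle lengths without ever asserting uniformity. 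You should replace the ``mix to uniform on $\mathcal S_{\mathcal G}$'' step with this coupling-to-stationarity argument at the level of cycle lengths; that is where the actual work in Appendix~B lies, and it is not a ``standard mixing estimate.''
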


  \section{Proof of curvature theorem}

\label{sec:curv}
  \subsection{Proof of the upper bound on curvature}\label{sec:upper_bound}

  We claim that it is enough to show the upper bound for $c>c_\Gamma$ in \eqref{eq:thm_lower2}. Indeed, notice that $c \mapsto \kappa_c$ is nondecreasing. Hence let $c\leq c_\Gamma$ and suppose we know that $\limsup_{n\rightarrow\infty}\kappa_{c'} \leq \theta(c')^2$ holds for all $c' > c_\Gamma$. Then we have that $\limsup_{n \rightarrow \infty}\kappa_c \leq \theta(c')^2$ for each $c'> c_\Gamma$. Taking $c' \downarrow c_\Gamma$ and using the fact that $\lim_{c' \downarrow c_\Gamma} \theta(c')=0$ shows that $\lim_{n\rightarrow\infty}\kappa_c=0$.

  Fix $c>c_\Gamma$ and let $t:=\lfloor cn/k\rfloor$. We need to show the upper bound in \eqref{eq:thm_lower2}. In other words, we wish to prove that for some $\sigma, \sigma' \in \Sn$
  $$
  \liminf_{n \rightarrow \infty} \frac{W_1(X^\sigma_{t}, X^{\sigma'}_{t})}{d(\sigma, \sigma')} \geq  1- \theta(c)^2.
  $$
  We will choose $\sigma = \id$ and $\sigma' =  \tau_1 \circ \tau_2$, where $\tau_1, \tau_2$ are independent uniformly chosen transpositions. To prove the lower bound on the Kantorovitch distance we use the dual representation of the distance $W_1(X,Y)$ between two random variables $X,Y$:
  \begin{equation}\label{eq:wasserstein_dual}
    W_1(X,Y)= \sup\{\E[f(X)]-\E[f(Y)]: f \text{ is Lipschitz with Lipschitz constant }1\}.
  \end{equation}
  Let $f(\sigma) = d(\id, \sigma)$ be the distance to the identity (using only transpositions, as usual). Then observe that $f$ is 1-Lipschitz.
  It suffices to show
  \begin{equation}\label{eq:kappa_expectation_bnd0}
    \liminf_{n\to \infty} \E[f(X^{\tau_1\circ \tau_2}_{t})]-\E[f(X^{\id}_{t})] \geq 2 (1- \theta(c)^2).
  \end{equation}

  We will now show \eqref{eq:kappa_expectation_bnd0} by a coupling argument.
  Construct the two walks $X^{\tau_1\circ\tau_2}$ and $X^{\id}$ as follows. Let $\gamma_1,\gamma_2,\dots$ be a sequence of i.i.d. random variables uniformly distributed on $\Gamma$, independent of $(\tau_1, \tau_2)$.
  Using Lemma \ref{lemma:transpo_end} with $\sigma_0 = \tau_1 \circ \tau_2$, which is independent of $X^{\id}$, we can construct $X^{\tau_1\circ \tau_2}_{t}$ as
  \[
    X^{\tau_1\circ \tau_2}_{t}=\gamma_1\circ\dots\circ\gamma_t \circ \tau_1\circ\tau_2.
  \]
  Next we couple $X^{\id}_{t}$ by constructing it as
  \[
    X^{\id}_{t}= \gamma_1\circ \dots \circ \gamma_{t}.
  \]
  Thus under this coupling we have that $X^{\tau_1\circ \tau_2}_{t}=X^{\id}_{t} \circ \tau_1\circ\tau_2$. Let $X=X^{\id}$, then from \eqref{eq:kappa_expectation_bnd0} the problem reduces to showing
  \begin{equation}\label{eq:kappa_expectation_bnd}
    \liminf_{n \rightarrow \infty}\E[d(\id,X_{t}\circ\tau_1\circ\tau_2)-d(\id,X_{t})]\geq  2 (1- \theta(c)^2).
  \end{equation}

We recall that a transposition can either induce a fragmentation or a coalescence of the cycles. Indeed, a transposition involving elements from the same cycle generates a fragmentation of that cycle, and one involving  elements from different cycles results in the cycles being merged. (This property is the basic tool used in the probabilistic analysis of random transpositions, see e.g. \cite{BerestyckiDurrett} or \cite{schramm_transpo}).  Hence either $\tau_{1}$ fragments a cycle of $X_{t}$ or $\tau_{1}$ coagulates two cycles of $X_{t}$. In the first case, $d(\id,X_t \circ \tau_1) = d(\id, X_t \circ \tau_1) - 1$, and in the second case we have $d(\id,X_t\circ\tau_1) = d(\id,X_t\circ \tau_1) + 1$.
  Let $F$ denote the event that $\tau_{1}$ causes a fragmentation. Then
  \begin{align*}
    \E[d(\id,X_t\circ\tau_1)-d(\id,X_{t})] &= 1 - 2 \P(F).
  \end{align*}
  Using the Poisson--Dirichlet structure described in Theorem \ref{thm:general_schramm} it is not hard to show that $\P(F) \to \theta(c)^2/2$ (see, e.g., Lemma 8 in \cite{berestycki_k}). Applying the same reasoning to $X_{t} \circ \tau_1\circ\tau_2$ and $X_{t}\circ \tau_1$ we deduce that
  $$
  \lim_{n \to \infty} \E[d(\id,X_t\circ\tau_1\circ\tau_2)-d(\id,X_{t})] =2(1- \theta(c)^2)
  $$
  from which the lower bound \eqref{eq:kappa_expectation_bnd} and in turn the upper bound in \eqref{eq:thm_upper} follow readily.

  \subsection{Proof of lower bound on curvature.}\label{sec:lower_bound}

  We now assume that $c>c_\Gamma$ and turn out attention to the lower bound on the Ricci curvature, which is the heart of the proof. Throughout we let $k=|\Gamma|$ and $t=\lfloor cn/k \rfloor$. With this notation in mind we wish to prove that
  $$
  \limsup_{n \to \infty}\,\sup_{\sigma,\sigma'} \frac{\E d(X^{\sigma}_{t }, X^{\sigma'}_{t } )}{d(\sigma, \sigma')} \le \alpha: =  1-\theta(c)^4
  $$
  for some appropriate coupling of $X^\sigma$ and $X^{\sigma'}$, where the supremum is taken over all $\sigma,\sigma'$ with even distance. Note that we can make several reductions: first, by vertex transitivity we can assume $\sigma = \id$ is the identity permutation. Also, by the triangle inequality (since $W_1$ is a distance), we can assume that $\sigma'= (i,j )\circ (\ell,m)$ is the product of two distinct transpositions. There are two cases to consider:  either the supports of the transpositions are disjoint, or they overlap on one vertex. We will focus in this proof on the first case where the support of the transpositions are disjoint; that is, $i,j,l,m$ are pairwise distinct. The other case is dealt with very much in the same way (and is in fact a bit easier).

  Clearly by symmetry $\E d(X^{\id}_{t}, X^{(i,j)\circ (\ell,m)}_{t} )$ is independent of $i$, $j$, $\ell$ and $m$, so long as they are pairwise distinct. Hence it is also equal to $\E d(X^{\id}_{t}, X^{\tau_1\circ\tau_2}_{t} )$ conditioned on the event $A$ that $\tau_1,\tau_2$ having disjoint support, where $\tau_1$ and $\tau_2$ are independent uniform random transpositions.
  This event has an overwhelming probability for large $n$, thus it suffices to construct a coupling between $X^{\id}$ and $X^{\tau_1\circ\tau_2}$ such that
  \begin{equation}
    \label{goal0}
    \limsup_{n \to \infty} \E d(X^{\id}_{t }, X^{\tau_1\circ\tau_2}_{t } ) \le 2(1- \theta(c)^4).
  \end{equation}
  Indeed, it then immediately follows from stochastic commutativity (Lemma \ref{lemma:transpo_end}) that the same is true with the expectation replaced by the conditional expectation given $A$, since the distance is bounded by two.

  Next, let $X$ be a random walk on $\Sn$ which is the composition of i.i.d. uniform elements of the conjugacy class $\Gamma$.
  We decompose the random walk $X$ into a walk $\tilde X$ which evolves by applying transpositions at each step as follows. For $t =0,1, \ldots, $ write out
  \[
    X_t= \gamma_1\circ \dots\circ \gamma_t
  \]
  where $\gamma_1,\gamma_2,\dots$ are i.i.d. uniformly distributed in $\Gamma$. As before we decompose each step $\gamma_s$ of the walk into a product of cyclic permutations, say
  \begin{equation}\label{decomp_cyc}
    \gamma_s = \gamma_{s,1} \circ \ldots \circ \gamma_{s,r}
  \end{equation}
  where $r = \sum_{j\ge 2} k_j$. The order of this decomposition is irrelevant and can be chosen arbitrarily. For concreteness, we decide that we start from the cycles of smaller sizes and progressively increase to cycles of larger sizes. We will further decompose each of these cyclic permutation into a product of transpositions, as follows: for a cycle $c = (x_1, \ldots, x_j)$, write
  $$
  c= (x_1, x_2) \circ \ldots  \circ (x_{j-1}, x_j).
  $$
  This allows to break any step $\gamma_s$ of the random walk $X$ into a number $$\rho := \sum_j (j-1) k_j$$ of elementary transpositions, and hence we can write
  \begin{equation}\label{eq:gamma_s}
    \gamma_s=\tau^{(1)}_s\circ\dots\circ\tau^{(\rho)}_s
  \end{equation}
  where $\tau^{(j)}_s$ are transpositions.  Note that the vectors $( \tau^{(i)}_s; 1\le i \le \rho)$ in \eqref{eq:gamma_s} are independent and identically distributed for $s = 1,2, \ldots$ and for a fixed $s$ and $1\le i \le \rho$, $\tau^{(i)}_s$ is a uniform transposition, by symmetry. However it is important to observe that the transpositions $\tau_s^{(i)}$ are \emph{not} independent. Nevertheless, they obey a crucial conditional uniformity which we explain now. First we have to differentiate between the set of times when a new cycle starts and the set of times when we are continuing an old cycle.

  \begin{definition}[Refreshment Times] \label{def:refreshment}
    We call a time $s$ a refreshment time if $s$ is of the form $s=\rho \ell + \sum_{j=2}^m (j-1) k_j$ for some $\ell \in \N\cup \{0\}$ and $m \in\N \backslash \{1\}$.
  \end{definition}
  We see that $s$ is a refreshment time if the transposition being applied to $\tilde X$ at time $s$ is the start of a new cycle. Using this we can describe the law of the transpositions being applied to $\tilde X$.

  \begin{prop}[Conditional Uniformity]\label{prop:conditional_unif}
    For $s \in \N$ and $i \leq \rho$, the conditional distribution of $\tau^{(i)}_s$ given $\tau^{(1)}_s, \ldots, \tau^{(i-1)}_s$ can be described as follows. We write $\tau^{(i)}_s  = (x,y)$ and we will distinguish between the first marker $x$ and the second marker $y$. There are two cases to consider:
    \begin{enum}
    \item $s\rho + i$ is a refreshment time and thus $\tau^{(i)}_s$ corresponds to the start of a new cycle
    \item $s\rho + i$ is not a refreshment time and so $\tau^{(i)}_s$ is the continuation of a cycle.
    \end{enum}
     In case (i) $x$ is uniformly distributed on $S_i: = \{1, \ldots, n\} \setminus \text{Supp} (\tau_s^{(1)} \circ \ldots \circ \tau^{(i-1)}_s)$ and $y$ is uniformly distributed on $S_i \setminus\{x\}$. In case (ii) $x$ is equal to the second marker of $\tau^{(i-1)}_s$ and $y$ is uniformly distributed in $S_i$.
  \end{prop}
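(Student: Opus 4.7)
The plan is to exhibit an explicit sampling scheme for the uniform distribution on $\Gamma$ from which the conditional distributions of the transpositions $\tau_s^{(i)}$ can be read off directly. Let $\pi$ be a uniformly random element of $\Sn$, viewed as an ordering $\pi(1), \ldots, \pi(n)$ of $\{1, \ldots, n\}$, and form $\gamma_s$ by cutting this ordering into blocks of sizes matching the cycle structure: the first $2k_2$ entries constitute $k_2$ two-cycles $(\pi(1), \pi(2)), (\pi(3), \pi(4)), \ldots$; the next $3k_3$ entries constitute $k_3$ three-cycles; and so on. A given $\gamma \in \Gamma$ arises from exactly $\prod_{j \ge 2} j^{k_j} k_j!$ many orderings $\pi$ (the factor $j^{k_j}$ because each $j$-cycle has $j$ valid cyclic starting points, and the factor $k_j!$ because the $k_j$ cycles of length $j$ can be listed in any order). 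Since this multiplicity does not depend on $\gamma$, the induced $\gamma_s$ is uniform on $\Gamma$.

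Next, the transposition decomposition $(x_1,\ldots,x_j) = (x_1,x_2) \circ \cdots \circ (x_{j-1},x_j)$ turns a cycle block of length $j$ into $j-1$ consecutive transpositions, each of the form $\tau_s^{(i)} = (\pi(N+\ell), \pi(N+\ell+1))$, where $N$ counts the $\pi$-entries consumed by preceding completed cycle blocks and $1 \le \ell \le j-1$ indexes the position within the current cycle. The position $\ell = 1$ introduces two fresh entries of $\pi$, while $\ell \ge 2$ introduces only one fresh entry and forces the new first marker to equal the second marker of $\tau_s^{(i-1)}$. By inspection, the positions $\ell = 1$ in the global indexing $i$ are precisely the refreshment times of Definition \ref{def:refreshment}. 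A straightforward induction on $i$ also shows that the support of $\tau_s^{(1)} \circ \cdots \circ \tau_s^{(i-1)}$ equals $\{\pi(1), \ldots, \pi(M)\}$, where $M$ is the total number of fresh $\pi$-entries consumed through $\tau_s^{(i-1)}$, so that $S_i = \{\pi(M+1), \ldots, \pi(n)\}$.

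Conditional uniformity is now immediate from the elementary fact that for a uniform ordering $\pi$, the entry $\pi(M+1)$ is uniformly distributed on $\{1,\ldots,n\} \setminus \{\pi(1), \ldots, \pi(M)\}$ given $\pi(1), \ldots, \pi(M)$, and likewise for $\pi(M+2)$. In case (i) at a refreshment time, both markers of $\tau_s^{(i)}$ are fresh draws $\pi(M+1), \pi(M+2)$, yielding the claimed joint law on $S_i \times (S_i \setminus \{x\})$. In case (ii), the first marker is forced to equal $\pi(M)$, namely the second marker of $\tau_s^{(i-1)}$, while the second marker $\pi(M+1)$ is fresh and thus uniform on $S_i$, giving exactly the stated distribution. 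The main obstacle is purely bookkeeping, namely matching the global index $i$ to the position $\ell$ within the current cycle block and verifying the induction on supports; once the indexing is aligned, the proposition reduces to the uniformity of $\pi$.
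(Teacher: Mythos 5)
The paper itself gives no proof of Proposition~\ref{prop:conditional_unif}: it is stated as an observation immediately following the construction, and the claimed conditional uniformity is treated as self-evident from the way $\gamma_s$ is decomposed into cycles and then transpositions. Your proposal supplies the natural rigorous argument that makes this precise, by realising a uniform element of $\Gamma$ as the image of a uniform ordering $\pi$ of $\{1,\dots,n\}$ cut into consecutive blocks of the prescribed cycle sizes, and then reading off the markers directly from $\pi$. This is exactly the right idea and your bookkeeping (the identification $S_i=\{\pi(M+1),\dots,\pi(n)\}$, the distinction between refreshment times $\ell=1$ and continuation times $\ell\ge 2$) is correct.

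Two small points worth tightening. First, the multiplicity of each $\gamma\in\Gamma$ among orderings $\pi\in\Sn$ is $(n-|\Gamma|)!\prod_{j\ge 2} j^{k_j}k_j!$, not $\prod_{j\ge 2} j^{k_j}k_j!$: you have omitted the factor accounting for the $(n-|\Gamma|)$ unused trailing entries of $\pi$ (the fixed points of $\gamma$). This does not affect the conclusion since the multiplicity is still independent of $\gamma$, but the count should be stated correctly. Second, the conditioning in the proposition is on the \emph{ordered} transpositions $\tau^{(1)}_s,\dots,\tau^{(i-1)}_s$ (first and second markers distinguished). This is what makes that conditioning equivalent to conditioning on $(\pi(1),\dots,\pi(M))$, which your final paragraph uses implicitly; it would be worth stating that, as ordered pairs, $\tau^{(1)}_s,\dots,\tau^{(i-1)}_s$ determine and are determined by $(\pi(1),\dots,\pi(M))$ (via the fixed cycle-type bookkeeping), so the two $\sigma$-fields coincide.
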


  Note that in either case, the second marker $y$ is conditionally uniformly distributed among the vertices which have not been used so far. This conditional independence property is completely crucial, and allows us to make use of methods (such as that of Schramm \cite{schramm_transpo}) developed initially for random transpositions) for general conjugacy classes, so long as $|\Gamma| = o(n)$. Indeed in that case the second marker $y$ itself is not very different from a uniform random variable on $\{1, \ldots, n\}$.

  We will study this random walk using this new \emph{transposition time scale}. We thus define a process $\tilde X=(\tilde X_u: u = 0,1,\ldots)$ as follows. Let $u\in \{ 0,1, \ldots\} $ and write $u=s\rho +i$ where $s,i$ are nonnegative integers and $i<\rho$. Then define
  \begin{equation}\label{eq:tilde_X_defn}
    \tilde X_u:= X_s \circ \tau_{s+1}^{(1)} \circ \dots \circ \tau_{s+1}^{(i)}.
  \end{equation}
  Thus it follows that for any $s \geq 0$, $\tilde X_{s\rho }=X_s$.
  Notice that $\tilde X$ evolves by applying successively transpositions with the above mentioned conditional uniformity rules.

  Now consider our two random walks, $X^{\id}$ and $X^{\tau_1\circ\tau_2}$ respectively, started respectively from $id$ and $\tau_1 \circ \tau_2$, and let $\tilde X^{\id}$ and $\tilde X^{\tau_1 \circ \tau_2}$ be the associated processes constructed using \eqref{eq:tilde_X_defn}, on the transposition time scale. Thus to prove \eqref{goal0} it suffices to construct an appropriate coupling between $\tilde X^{\id}_{t \rho}$ and $\tilde X^{\tau_1 \circ \tau_2}_{t\rho}$.
  Next, recall that for a permutation $\sigma \in \Sn$, $\X(\sigma)$ denotes the renormalised cycle lengths of $\sigma$, taking values in $\Omega_\infty$ defined in \eqref{Omega}. The walks $\tilde X^{\id}$ and $\tilde X^{\tau_1\circ\tau_2}$ are invariant by conjugacy and hence both are distributed uniformly on their conjugacy class. Thus ultimately it will suffice to couple $\X(\tilde X^{\id}_{t \rho })$ and $\X(\tilde X^{\tau_1\circ\tau_2}_{t \rho })$.

  Fix $\delta>0$ and let $\Delta=\lceil \delta^{-9} \rceil$. Define
  \begin{align*}
    s_1&= \lfloor (cn-\delta^{-9})/k\rfloor \rho\\
    s_2&=s_1+\Delta\\
    s_3&=t\rho.
  \end{align*}
  Our coupling consists of three intervals $[0,s_1]$, $(s_1,s_2]$ and $(s_2,s_3]$.

  Let us informally describe the coupling before we give the details. In what follows we will couple the random walks $\tilde X^{\id}$ and $\tilde X^{\tau_1\circ \tau_2}$ such that they keep their distance constant during the time intervals $[0,s_1]$ and $(s_2,s_3]$. In particular we will see that at time $s_1$, the walks $\tilde X^{\id}$ and $\tilde X^{\tau_1\circ \tau_2}$ will differ by two independently uniformly chosen transpositions. Thus at time $s_1$ most of the cycles of $\tilde X^{\id}$ and $\tilde X^{\tau_1\circ \tau_2}$ are identical but some cycles may be different. We will show that given that the cycles that differ at time $s_1$ are all reasonably large, then we can reduce the distance between the two walks to zero during the time interval $(s_1,s_2]$. Otherwise if one of the differing cycles is not reasonably large, then we couple the two walks to keep their distance constant during the time interval $[0,s_1]$, $(s_1,s_2]$ and $(s_2,s_3]$.

  More generally, our coupling has the property that $d(X^{\id}_t, X^{\tau_1 \circ \tau_2}_t)$ is uniformly bounded, so that it will suffice to concentrate on events of high probability in order to get a bound on the $L^1$-Kantorovitch distance $W(X^{\id}_t, X^{\tau_1 \circ \tau_2}_t )$.

  \subsubsection{Coupling for \texorpdfstring{$[0,s_1]$}{[0,s1]}}

  First we describe the coupling during the time interval $[0,s_1]$. Let $\tilde X=(\tilde X_s:s \geq 0)$ be a walk with the same distribution as $\tilde X^{\id}$, independent of the two uniform transpositions $\tau_1$ and $\tau_2$. Then we have that by Lemma \ref{lemma:transpo_end} for any $s \geq 0$, $\tilde X^{\tau_1\circ\tau_2}_s$ has the same distribution as $\tilde X_s \circ \tau_1 \circ \tau_2$. Thus we can couple $\X(\tilde X^{\id}_{s_1 })$ and $\X(\tilde X^{\tau_1\circ \tau_2}_{s_1 })$ such that
  \begin{align}\label{A1}
    \X(\tilde X^{\id}_{s_1 })&= \X(\tilde X_{s_1 }) \nonumber\\
    \X(\tilde X^{\tau_1\circ\tau_2}_{s_1 }) &= \X(\tilde X_{s_1 } \circ \tau_{1} \circ \tau_{2}).
  \end{align}

\subsubsection{Coupling for \texorpdfstring{$(s_1,s_2]$}{(s1,s2]}}\label{subsec:I_2}

For $s \geq 0$ define $\bar X_s=\X(\tilde X^{\id}_{s+s_1 })$ and $\bar Y_s = \X(\tilde X^{\tau_1\circ\tau_2}_{s+s_1 })$. Here we will couple $\bar X_s$ and $\bar Y_s$ for $s =0,\dots, \Delta$. During this time we aim to show that the discrepancies between $\bar X_0$ and $\bar Y_0$ resulting from performing the transpositions $\tau_1$ and $\tau_2$ at the end of the previous phase can be resolved. Our main tool for doing this will be a variant of a coupling of Schramm \cite{schramm_transpo}, which was already used in \cite{berestycki_k}.

At each step $s$ we try to create a matching between $\bar X_s$ and $\bar Y_s$ by matching an element of $\bar X_s$ to at most one element of $\bar Y_s$ of the same size. At any time $s$ there may be several entries that cannot be matched. By parity the combined number of unmatched entries is an even number, and observe that this number cannot be equal to two. Now $\tilde X^{\id}_{s_1}$ and $\tilde X^{\tau_1\circ\tau_2}_{s_1}$ differ by two transpositions as can be seen from \eqref{A1}. This implies that in particular initially (i.e., at the beginning of $(s_1,s_2]$), there are four, six or zero unmatched entries between $\bar X_0$ and $\bar Y_0$.

Fix $\delta>0$ and let $A(\delta)$ denote the event that the smallest unmatched entry between $\bar X_{0}$ and $\bar Y_0$ has size greater than $\delta>0$. We will show that on the event $A(\delta)$ we can couple the walks such that $\bar X_{\Delta}=\bar Y_{\Delta}$ with high probability. On the complementary event $A(\delta)^c$, couple the walks so that their distance remains $O(1)$ during the time interval $(s_1,s_2]$, similar to the coupling during $[0,s_1]$.

It remains to define the coupling during the time interval $(s_1,s_2]$ on the event $A(\delta)$. We begin by estimating the probability of $A(\delta)$.

\begin{lemma}
  \label{L:unmatched_large_0}  For any $c>1$ and $\delta>0$,
  $$
  \liminf_{n \to \infty} \P(A(\delta)) \ge [\theta(c) (1-p(\delta))]^4 .
  $$
  wher $p(\delta) \to 0$ as $\delta \to 0$.
\end{lemma}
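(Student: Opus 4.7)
The plan is to exhibit a sufficient event $E \subseteq A(\delta)$ whose asymptotic probability is at least $[\theta(c)(1-\delta)]^4$, estimated via the Poisson--Dirichlet limit of $\X(\tilde X_{s_1})$ supplied by Theorem~\ref{thm:general_schramm}.

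I would take $E$ to be the event that the four endpoints $a_1, b_1, a_2, b_2$ of the transpositions $\tau_1, \tau_2$ lie in four pairwise distinct cycles of $\tilde X_{s_1}$, each of normalised size strictly greater than $\delta$. The first step is to verify that, in the $n\to\infty$ limit, $E \subseteq A(\delta)$. Under $E$, the endpoints of $\tau_1$ lie in two distinct cycles, so by the coalescence/fragmentation dichotomy recalled in Section~\ref{sec:upper_bound}, $\tau_1$ merges two cycles of size $>\delta$; likewise $\tau_2$ merges two other cycles of size $>\delta$. Hence $\bar Y_0$ is obtained from $\bar X_0$ by removing four cycles of normalised size $>\delta$ and adding two merged cycles, each of size $>2\delta$. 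The continuity of Poisson--Dirichlet ensures that in the limit all relevant macroscopic cycle sizes are almost surely distinct, so no coincidental matching occurs: all six symmetric-difference elements are genuinely unmatched and of normalised size $>\delta$, whence $A(\delta)$ holds.

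The second step is to lower bound $\P(E)$. Conditioning on $\tilde X_{s_1}$, the four endpoints are (up to a $O(1/n)$ correction for distinctness) four independent uniform samples in $\{1,\ldots,n\}$, so
$$
\P(E \mid \tilde X_{s_1}) = \sum_{\substack{i_1, i_2, i_3, i_4 \text{ pairwise distinct}\\ \X_{i_j}(\tilde X_{s_1}) > \delta}} \prod_{j=1}^{4} \X_{i_j}(\tilde X_{s_1}) + o(1).
$$
By Theorem~\ref{thm:general_schramm}, the top renormalised cycle lengths $(\X_i(\tilde X_{s_1})/\theta(c))_i$ converge jointly to a Poisson--Dirichlet random variable, whose size-biased stick-breaking representation is recalled just before the theorem, namely $\tilde Z_j = U_j \prod_{i<j}(1-U_i)$ with $U_i$ i.i.d.\ uniform on $[0,1]$. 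I would interpret the four endpoints as four successive size-biased picks (without replacement) from the cycle-size distribution: each pick lies with probability $\theta(c)$ in the macroscopic part, and conditionally on landing in a new macroscopic cycle, the renormalised size is a product of independent uniforms on intervals determined by the previously consumed stick, contributing a factor at least $(1-\delta)$ for exceeding the threshold $\delta$. Multiplying the four contributions and passing to $\liminf_n$ yields the claimed bound $[\theta(c)(1-\delta)]^4$.

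\textbf{Main obstacle.} The hard part will be turning the heuristic ``each of the four size-biased picks contributes a factor $\theta(c)(1-\delta)$'' into an honest lower bound, since the picks without replacement are not independent: each one consumes a portion of the macroscopic stick and hence deforms the conditional Poisson--Dirichlet structure for the subsequent picks. Extracting the clean product form in the limit requires a careful cascaded conditional analysis through the stick-breaking representation, combined with a convexity/Jensen-type inequality to survive the reduction of the remaining stick at each stage. A secondary, more routine, subtlety is to exclude coincidences between the sizes of the two newly created merged cycles and preexisting cycles of the same size (which would reduce the number of genuinely unmatched entries); this is handled by the continuity of the Poisson--Dirichlet law.
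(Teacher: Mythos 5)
Your proposal is structurally close to the paper's argument, but the sufficient event you choose is genuinely too small to give the stated bound. The paper first conditions on $A_1 = \{$all four endpoints of $\tau_1,\tau_2$ lie in the giant component of $H_{s_1/\rho}\}$; this has probability tending to $\theta(c)^4$ and \emph{does not} ask the four endpoints to hit distinct cycles. Only afterwards does it argue, via Theorem~\ref{thm:general_schramm}, that $\P(A(\delta)\mid A_1) > (1-\delta)^4$. Your event $E$ replaces $A_1$ by the stricter requirement that the four endpoints lie in four \emph{pairwise distinct} cycles, each of size exceeding $\delta$.

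That distinctness requirement is not asymptotically harmless. In the Poisson--Dirichlet limit, two independent size-biased picks fall in the same part with probability $\E[\sum_i Z_i^2] = 1/2$. Consequently, conditional on all four endpoints lying in the giant component, the probability that they land in four distinct cycles is bounded above by $1/2$, uniformly in $\delta$. Hence $\liminf_{n\to\infty}\P(E) \le \theta(c)^4/2$, which is strictly below $[\theta(c)(1-\delta)]^4$ once $\delta$ is small. So no amount of care in the ``cascaded conditional analysis through the stick-breaking representation'' that you flag as the hard part can rescue the bound: the event $E$ itself is too restrictive to carry the full $\theta(c)^4$ factor. The fix is to work with the paper's $A_1$ (or any event that does not impose pairwise distinct cycles) and then handle the collision cases when estimating the conditional probability of $A(\delta)$.

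Two smaller remarks. First, the inclusion $E\subseteq A(\delta)$ does not need a continuity argument about size coincidences: deterministically, the multiset symmetric difference between $\bar X_0$ and $\bar Y_0$ is contained in $\{|C_1|,\dots,|C_4|,|C_1\cup C_2|,|C_3\cup C_4|\}$, all of whose elements exceed $\delta$ on $E$, so the smallest unmatched entry exceeds $\delta$ no matter which accidental matchings occur. Second, the per-pick factor you quote as ``$\theta(c)(1-\delta)$'' needs care about the normalisation: a uniform point lands in a cycle of \emph{$n$-renormalised} size exceeding $\delta$ with probability $\theta(c)-\delta$ in the limit, not $\theta(c)(1-\delta)$; the factor $(1-\delta)$ per pick arises if the threshold is $\theta(c)\delta$, i.e.\ a proportion $\delta$ of the giant component. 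This normalisation issue is present (implicitly) in the paper's statement as well, but you should be aware that it affects whether your intermediate claim is literally correct.
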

\begin{proof}
  Recall that by construction $\bar X_0$ and $\bar Y_0$ only differ because of the two transpositions $\tau_{1} $ and $\tau_{2}$ appearing in \eqref{A1}.

  Recall the hypergraph $H_{s_1/\rho}$ on $\{1,\dots,n\}$ defined in the beginning of Section \ref{SS:hyper}. Since $c>c_\Gamma$, by Theorem \ref{lemma:hyper_k_cycle}, $H_{s_1/\rho}$ has a (unique) giant component with high probability. Let $A_1$ be the event that the four points composing the transpositions $\tau_{1}, \tau_{2}$ fall within the largest component of the associated hypergraph $H_{s_1/\rho}$. Since the relative size of the giant component converges in probability $\theta(c)$ by Lemma \ref{lemma:hyper_k_cycle}, note that $\P(A_1) \rightarrow \theta(c)^4$.

  Furthermore, it follows from Theorem \ref{thm:general_schramm} that conditionally on the event $A_1$, the asymptotic relative size of the cycles containing the four points making the transpositions $\tau_1, \tau_2$ can be thought of as the size of four independent samples from a Poisson-Dirichlet distribution, multiplied by $\theta(c)$. Hence the lemma is proved with $p(\delta)$ being the probability that one of the four samples has a size smaller than $\delta /\theta(c)$. Clearly $p(\delta ) \to 0$ so the result is proved.
\end{proof}

Recall that the transpositions which make up the walks $\tilde X^{\id}$ and $\tilde X^{\tau_1\circ\tau_2}$ obey what we called conditional uniformity in Proposition \ref{prop:conditional_unif}. For the duration of $(s_1,s_2]$ we will assume the \textit{relaxed conditional uniformity} assumption, which we describe now.

\begin{definition}[Relaxed Conditional Uniformity]\label{defn:relaxed}
  For $s=s_1+1,\dots,s_2$ suppose we apply the transposition $(x,y)$ at time $s$. Then
  \begin{enum}
  \item if $s$ is a refreshment time then $x$ is chosen uniformly in $\{1,\dots,n\}$,
  \item if $s$ is not a refreshment time then $x$ is taken to be the second marker of the transposition applied at time $s-1$.
  \end{enum}
  In both cases we take $y$ to be uniformly distributed on $\{1,\dots,n\}\backslash\{x\}$.
\end{definition}

In making the relaxed conditional uniformity assumption we are disregarding the constraints on $(x,y)$ given in Proposition \ref{prop:conditional_unif}. However the probability we violate this constraint at any point during the interval $(s_1,s_2]$ is at most $2(s_2-s_1)\rho/n=2\Delta k/n \to 0 $; and on the event that this constraint is violated the distance between the random walks can increase by at most $(s_2-s_1)=\Delta$. Hence we can without a loss of generality assume that during the interval $(s_1,s_2]$ both $\tilde X^{\id}$ and $\tilde X^{\tau_1\circ \tau_2}$ satisfy the relaxed conditional uniformity assumption.

Now we show that on the event $A(\delta)$ we can couple the walks such that $\bar X_{\Delta}=\bar Y_{\Delta}$ with high probability. The argument uses a coupling of Berestycki, Schramm, Zeitouni~\cite{berestycki_k}, itself a variant of a beautiful coupling introduced by Schramm~\cite{schramm_transpo}. We first introduce some notation. Let
\[
  \Omega_n:=\{\mathbf (x_1 \geq \dots \geq x_n): x_i \in \{0/n,1/n,\dots,n/n\} \text{ for each }i \leq n \text{ and } \sum_{i\leq n}x_i =1  \}.
\]
Notice that the walks $\bar X$ and $\bar Y$ both take values in $\Omega_n$.

\paragraph{Marginal evolution.} Let us describe the evolution of the random walk $\bar X=(\bar X_s: s = 0,1,\dots)$.
Suppose that $s\geq 0$ and $\bar X_{s}= (x_1,\dots,x_n)$. Now imagine the interval $(0,1]$ tiled using the intervals $(0,x_1],\dots,(0, x_n]$
%where $s_i = x_1 + \ldots + x_i$
(the specific tiling rule does not matter).
Initially for $s=0$, and more generally if $s$ is a refreshment time, we select $u \in \{1/n,\dots,n/n\}$ uniformly at random and then call the tile that contains $u$ the \textbf{marked tile}. If $s \geq 1$ is not a refreshment time then the marked tile is the one containing the second marker $y$ of Proposition
\ref{prop:conditional_unif} from the previous step. Either way, we have a distinguished tile (the tile containing the `first marker' at the beginning of each step $s = 0, 1, \ldots$

We now describe the marginal evolution of this tiling for one step. In fact this evolution takes as an input a tiling $\bar X_s$ and a marked tile $I$. The output will be another tiling $\bar X_{s+1}$ and a new marked tile for the next step. Let $I$ be the tile containing the first marker at the beginning of the step, and place $I$ first from left. ($I$ represents the cycle containing the first marker $u$ and we imagine that $u$ is the leftmost point of that tile, i.e., in position $1/n$). Select $v \in \{2/n,\dots,n/n\}$ uniformly at random and let $I'$ be the tile that $v$ falls into. Then there are two possibilities:
\begin{itemize}
 \item if $I' \neq I$ then we merge the tiles $I$ and $I'$. The new tile we created is now marked for the next step.

 \item If $I=I'$ then we split $I$ into two fragments, corresponding to where $v$ falls. Thus, one of size $v-1/n$ and the other of size $|I|-(v-1/n)$. The rightmost one of these two tiles, containing $v$, is now marked for the next step.
Now $\bar X_{s+1}$ is the sizes of the tiles in the new tiling we have created (with additional reordering of tiles in decreasing order). .
\end{itemize}

This defines a transformation $T( \bar X_s, I, v)$. The evolution described above has the law of the projection onto $\Omega_n$ of $\bar X$. Indeed, suppose we apply the transposition $(x,y)$ to $\bar X_s$ in order to obtain $\bar X_{s+1}$. The marked tile at time $s$ corresponds to the cycle of $\bar X_{s}$ containing $x$: if $s$ is a refreshment time then $x\in\{1,\dots,n\}$ is chosen uniformly, otherwise $x$ is the second marker from the previous step.
%Then we write the cycle containing $x$ as $(x,x_1,\dots,x_m)$ and so the point $x$ corresponds to $1/n$ in the tiling. Then we select the second marker $y\in \{1,\dots,n\}\setminus\{x\}$ uniformly, which corresponds to the selection of the marker $v\in \{2/n,\dots,n/n\}$.

%Before we describe the coupling in detail let us make a remark. In the course of the coupling there may be several things that may go wrong; for example the size of the smallest unmatched component may become too small. We will estimate the probability of such bad events and see that these tend to zero when we take $n\rightarrow\infty$ and then $\delta\rightarrow 0$. Furthermore, if we are so unfortunate for one of these bad events to happen, from this point onwards we keep applying the same transpositions to both copies. At the first time of occurrence of a bad event, we will see that the distance is bounded by 6, say. Using stochastic commutativity, applying the same sequence of transpositions does not increase the distance and so the contribution to $W(X^{\id}_t, X_t^{\tau_1 \circ \tau_2})$ coming from the union of these bad events tends to zero when $n \to \infty$ and then $\delta \to 0$.
%As mentioned previously, the coupling which we describe keeps the distance between walks $X^{\id}$ and $X^{\tau_1\circ\tau_2}$ bounded by $4$, hence we can safely ignore these unfortunate events.

\paragraph{Coupling.} We now recall the coupling of \cite{berestycki_k}.
Let $s \geq 0$. Suppose that $\bar X_{s}=\bar X=(x_1,\dots,x_n)$ and $\bar Y_s= \bar Y=(y_1,\dots,y_n)$. Then we can differentiate between the entries that are \emph{matched} and those that are \emph{unmatched}: we say that two entries from $\bar X$ and $\bar Y$ are matched if they are of identical size.
Our goal will be to create as many matched parts as possible and as quickly as possible.
%Let $Q$ be the total mass of the unmatched parts in either $\bar x$ or $\bar y$.
When putting down the tilings $\bar X$ and $\bar Y$, associated with $\bar X$ and $\bar Y$ respectively, we will do so in
such a way that all matched parts are to the
right of the interval
$(0,1]$ and the unmatched parts occupy the left part of the interval.

%Initially for $s=0$ suppose that $u \in \{1/n,\dots,n/n\}$ is chosen uniformly and call the tile that $u$ falls into in each of $\tilde x$ and $\tilde y$, \emph{marked}.
%As before if $s \geq 1$ is not a refreshment time then we keep marked the tiles which were marked in the previous step. Otherwise if $s\geq 1$ is a refreshment time we select new marked tiles in both $\tilde x$ and $\tilde y$ by selecting $u \in \{1/n,\dots,n/n\}$ uniformly at random and marking the tiles which $u$ falls into in each of $\tilde x$ and $\tilde y$.

Let $I_{\bar X}$ and $I_{\bar  Y}$ be the respective marked tiles of the tilings $\bar X$ and $\bar Y$ at some step $s \ge 0$,
and let $\hat X, \hat Y$ be the tiling which is the reordering of $\bar X,\bar Y$
in which $I_{\bar X}$ and $I_{\bar Y}$
have been put to the left of the interval $(0,1]$. We assume that at the start of the step, either $I_{\bar X}$ and $I_{\bar Y}$ are both matched to each other, or they are both unmatched. (We will then verify that this property is preserved by the coupling).
Let $a=|I_{\bar X}|$ and let $b=|I_{\bar Y}|$ be the respective
lengths of the marked tiles,
and assume without loss of generality that $a<b$.
Let $v \in \{2/n,\dots,n/n\}$ be chosen uniformly. We will apply $T(\bar X, I_{\hat X}, v)$ to $\hat X$ as we did before, and obtain $\bar X_{s+1}$.
To obtain $\bar Y_{s+1}$ we will also apply the transformation $T$ to it, but with an other uniform random variable $v' \in \{2/n,\dots,n/n\}$ which may differ from $v$. To construct $v'$ we proceed as follows.

If $I_{\hat X}$ is matched (so that $I_{\hat Y}$ is matched to it by assumption) then we take $v'=v$, as in the coupling of Schramm~\cite{schramm_transpo}.
In the case when $I_{\hat X}$ is unmatched (which also implies that $I_{\hat Y}$ is unmatched), we apply to $v$ a measure-preserving
map $\Phi$, defined as follows: for $w \in \{2/n,\dots,n/n\}$ consider the map
\begin{equation}\label{Phi}
  \Phi(w)=
  \begin{cases}
    w  & \text{if $w>b$ or if $2/n\leq w\le\gamma_n +1/n$}, \\
    w - \gamma_n  &  \text{if $a<w\le b$},\\
    w + b-a & \text{if $\gamma_n +1/n<w\le a$},
  \end{cases}
\end{equation}
where $\gamma_n:=\lceil an/2 -1\rceil / n$. (This is contrast with Schramm's original coupling, where $v' = v$ no matter what).
See Figure \ref{fig:bsz_coupling} (top right corner) for an illustration of $\Phi$, from which it should be clear in particular that $\Phi$ is a bijection and hence measure-preserving; this is easy to check. Thus letting $v' = \Phi(v)$ we have that $v'$ has the correct marginal distribution and thus so does $\bar Y_{s+1} = T( \hat Y_s, I_{\bar Y}, v')$. The rest of Figure \ref{fig:bsz_coupling} illustrates the various steps in the coupling as well as the content of Lemma \ref{L:consistent} and Lemma \ref{L:coupling}, as explained in the caption.
%We give an illustration of the coupling and the map $\Phi$ in Figure \ref{fig:bsz_coupling}.

\begin{figure}
  \centering
  \includegraphics[scale=.4]{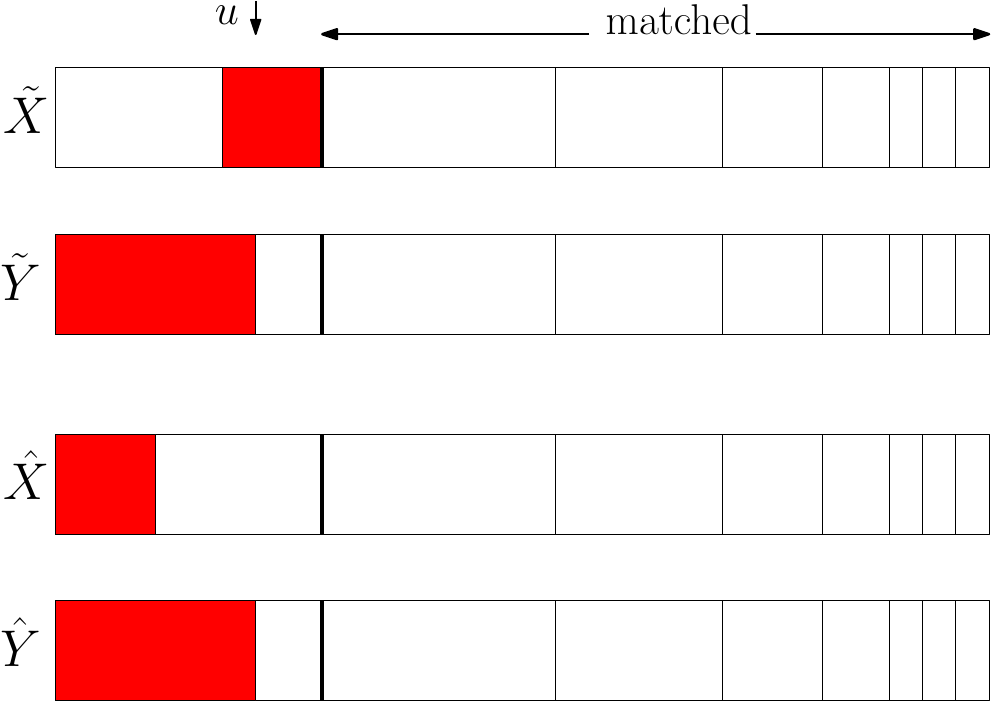} \hspace{1cm}
   \includegraphics[scale=.4]{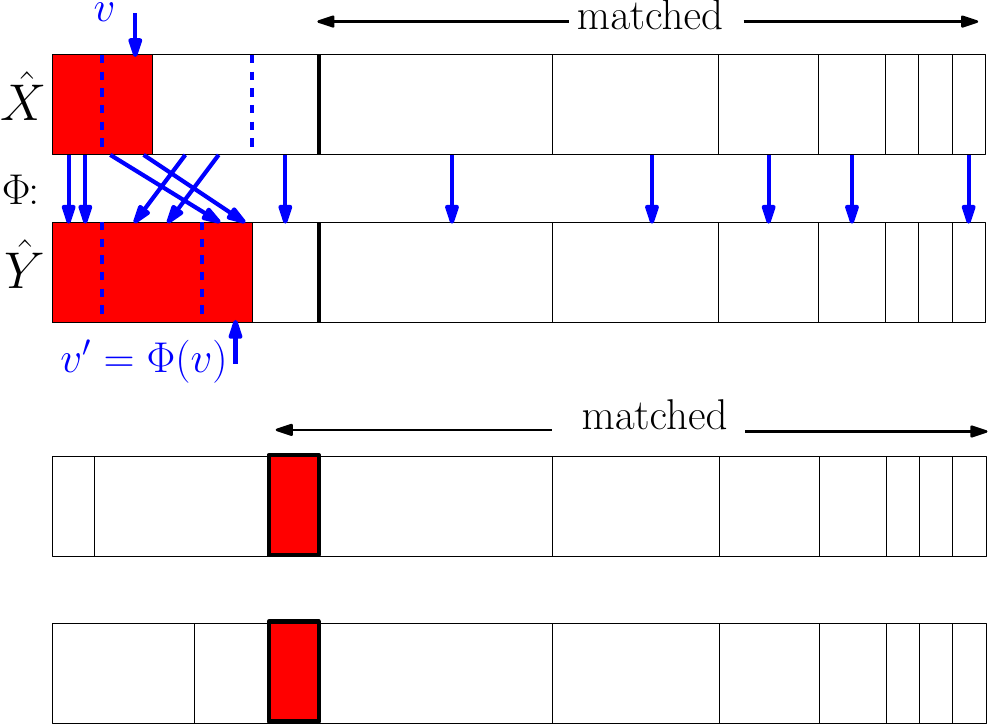}

  \caption{One step of the evolution under the coupling between $\bar X$ and $\bar Y$. The red entries represent the marked tiles. Left: the tilings are rearranged so that both marked tiles (which are here both unmatched) are to the left. Right: the second marker $v$ falls, and defines a marker $v' = \Phi(v)$. In this case we get a fragmentation in both copies. The new marked tile is the rightmost fragment thus created; here they are both matched to each other, verifying in this case the consistency property of the coupling (Lemma \ref{L:consistent}). The total number of unmatched entries has been left unchanged, though. And while the smallest unmatched entry has decreased in size, this has not been by more than a factor of two. This is the content of the key Lemma \ref{L:coupling}.}
  \label{fig:bsz_coupling}
\end{figure}

\medskip Before checking that the coupling is well defined (in the sense that our assumption on the marked tiles, which are needed for the definition of the coupling, remains true throughout), we briefly add a few words of motivation for this definition.

\paragraph{Motivation for the coupling.} The coupling defined above is, as already mentioned above, the same as the one used in \cite{berestycki_k}, which is a modification of a coupling due to Schramm \cite{schramm_transpo}. In Schramm's original coupling, the map $\Phi$ was taken to be the identity, which is natural enough. However this leads to the undesirable property that it is possible for very small unmatched pieces to appear; once these small unmatched pieces appear they remain in the system for a very long time which could prevent coupling. The reason for introducing the map $\Phi$ here and in \cite{berestycki_k} (where it was one of the main innovations) is that it prevents the occurrence of small unmatched pieces: as we will see in Lemma \ref{L:coupling}, the crucial property is that the worst thing that can happen is for the smallest unmatched piece to become smaller by a factor of two, and this only happens with small probability. This means all unmatched pieces remain relatively large, and so they disappear quite quickly (leading in turn to a coupling of the two copies). We start by a proof that the coupling is well defined:

\begin{lemma}\label{L:consistent}
At the end of a step the two marked tiles are either matched to each other or both unmatched.
\end{lemma}

\begin{proof}
The proof consists of examining several cases. If the first marker $u$ was in a matched tile, then whether $v$ falls in the matched or unmatched part, the property holds (if $v$ is unmatched then we attach two unmatched tiles to two matched tiles, so they both become unmatched. If it falls in the matched part, either two tiles of the same size are being attached, or the marked tile splits into two tiles of same size, and the rightmost piece which is the new marked tile matches in both copies).

A similar analysis can be done if the first marked tile was unmatched. The only case which requires an observation is if $v$ falls in the same tile as $I_{\bar X}$ (where we assume, as in the figure, that this is the sorter of the two unmatched pieces), then if $v$ falls in the first half of the tile this results in two matched pieces which are unmarked for the next step and two unmatched pieces which are both marked. If however $v$ falls in the second half of $I_{\bar X}$ then this results in two matched pieces which are marked, and two unmatched pieces which are not marked. (Recall that the marked tile at the next step is the one containing the rightmost fragment).
\end{proof}

\medskip This coupling has several remarkable deterministic properties, as already observed in \cite{berestycki_k}. Chief among those is the fact that \textbf{the number of unmatched entries can only decrease}. Unmatched entries disappear when they are coalesced. In particular they disappear quickly when their size is reasonably large. Hence it is particularly desirable to have a coupling in which unmatched components stay large. The second crucial property of this coupling is that it does not create arbitrarily small unmatched entries: even when unmatched entry is fragmented, the size of the smallest unmatched entry cannot decrease by more than a factor of two. (As these properties hold deterministically given the marked tiles, they do not need to be proved again).
A direct consequence of these properties is the following lemma, which is Lemma 19 from \cite{berestycki_k}.

\begin{lemma}\label{L:coupling}
  Let $U$ be the size of the
  smallest unmatched entry in two partitions $\bar x,\bar y \in \Omega_n$,
  let $\bar x',\bar y'$ be the corresponding partitions after one transposition of the coupling, and let $U'$ be the size of the smallest unmatched entry in $\bar x',\bar y'$. Assume that
  $2^j\le  U < 2^{j+1}$ for some $j \ge 0$.
  Then it is always the case that $U'\ge (1/n)\lfloor nU/2 \rfloor$, and moreover,
  $$
  \P( U' \le 2^j) \le 2^{j+2}/n.
  $$
  Finally, the combined number of unmatched parts may only decrease.
\end{lemma}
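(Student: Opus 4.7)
The plan is to verify all three claims by a direct case analysis of the coupling, as carried out in Lemma 19 of \cite{berestycki_k}; I sketch the main ingredients. Throughout, write $a = |I_{\bar x}|$ and $b = |I_{\bar y}|$ with $a \le b$, and condition on the uniform draw $v \in \{2/n,\ldots,n/n\}$.

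First I would establish the third claim, that the combined number of unmatched parts may only decrease. When the marked tiles happen to be matched ($a = b$), the rule $v' = v$ forces both tilings to undergo an identical split or merge, so every newly created tile in $\hat x$ has an equal-sized partner in $\hat y$ and no unmatched tile can appear. When the marked tiles are unmatched and $v \le a$ (so both tilings split), the defining formula for $\Phi$ is engineered precisely so that the small fragments cut off in $\hat x$ and $\hat y$ have the same size, namely $v - 1/n$ if $v \le \gamma_n + 1/n$ and $a - v + 1/n$ if $\gamma_n + 1/n < v \le a$. These fragments therefore form a newly matched pair, and the previously unmatched $I_{\bar x}, I_{\bar y}$ are replaced by their (still unmatched) larger halves, so the total unmatched count is unchanged. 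The remaining possibilities are a pair of merges (when $v > b$, so $v' = v$) or one merge plus one split (when $a < v \le b$, so $v' = v - \gamma_n$); for each of these a short check shows that no net increase in unmatched parts can occur, and coincidental size coincidences can only further reduce the count.

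Next, for the bound $U' \ge U/2 - 1/n$, the key observation is that the only way a new small unmatched tile can arise in one step of the coupling is through a fragmentation of the marked tile, since every other tile is left alone. From the split formulas above, the two unmatched pieces that survive a fragmentation have sizes bounded below by $a - \gamma_n$ (when $v \le \gamma_n + 1/n$) or by $\gamma_n$ (when $\gamma_n + 1/n < v \le a$). Using $\gamma_n = \lceil an/2 - 1\rceil/n \le a/2$ gives $a - \gamma_n \ge a/2$, and since $a \ge U$ whenever the marked tile is unmatched, one obtains $U' \ge U/2$; the $-1/n$ slack in the statement absorbs the rounding implicit in the definition of $\gamma_n$.

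Finally, for the probability estimate, I would bound the number of values of $v$ that cause the fragmentation to produce an unmatched piece of size at most $2^j/n$. From the piecewise formula for $\Phi$, such a ``bad'' $v$ must lie within a window of length at most $2^j/n$ adjacent to one of the two endpoints of the marked interval on the scaled line; the symmetric roles of $\hat x$ and $\hat y$ under $\Phi$ contribute at most two such windows, each containing at most $2^{j+1}$ admissible values of $v$. This gives at most $2^{j+2}$ bad values in total, and dividing by $|\{2/n,\ldots,n/n\}| = n-1$ yields the announced bound $\P(U' \le 2^j) \le 2^{j+2}/n$. The only delicate point of the whole argument is careful bookkeeping at the boundaries of the four sub-intervals in the definition of $\Phi$, which is precisely why the rounded constant $\gamma_n$ is used in place of $a/2$.
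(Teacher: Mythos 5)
Your proposal follows the same broad strategy as the paper (which itself defers to Lemma 19 of \cite{berestycki_k}): a direct case analysis of the coupling, showing that fragmentation of the marked unmatched tiles creates a matched pair and two larger unmatched pieces, then bounding how small the surviving unmatched piece can be and how many choices of $v$ land it below $2^j/n$. The paper does not spell this out — it simply cites the result — so your write-up is a reasonable reconstruction of that argument.

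However, there is a concrete error in your case analysis for the claim that unmatched parts cannot increase. You assert that when the marked tiles are unmatched and $v\le a$, the two small fragments cut off from $I_{\bar x}$ and $I_{\bar y}$ have equal size and therefore form a newly matched pair. This is true when $v\le\gamma_n+1/n$: both tilings cut off a piece of size $v-1/n$, and those two pieces match. But when $\gamma_n+1/n<v\le a$, we have $v'=\Phi(v)=v+b-a$, so $\hat x$ is split by $v$ into pieces of size $v-1/n$ and $a-v+1/n$, while $\hat y$ is split by $v'$ into pieces of size $v'-1/n=v+b-a-1/n$ and $b-v'+1/n=a-v+1/n$. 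Thus the matching pair consists of the \emph{right} fragment of $\hat x$ (size $a-v+1/n$) and the \emph{right} fragment of $\hat y$, not the two pieces "cut off" in the sense of your sentence, and in $\hat y$ the surviving unmatched piece has size $v'-1/n$, not $b-v'+1/n$. Your wording ("the small fragments cut off in $\hat x$ and $\hat y$ have the same size, namely \ldots $a-v+1/n$ if $\gamma_n+1/n<v\le a$") gets the matched sizes right by accident but then misidentifies which piece is the matched one, and the downstream size bound "$\ge\gamma_n$ when $\gamma_n+1/n<v\le a$" is stated for the wrong partition. This matters because the lower bound on $U'$ must be verified for \emph{both} surviving unmatched pieces (one in $\hat x$ of size $v-1/n>\gamma_n$, one in $\hat y$ of size $v+b-a-1/n>\gamma_n$), and your argument as written only tracks one of them. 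The fix is straightforward — in this regime both surviving pieces have size at least $\gamma_n\ge a/2-1/n\ge U/2-1/n$ — but as stated the case analysis is incomplete.

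A second, smaller gap: you never treat the case where the marked tile in one copy is unmatched and $a<v\le b$, so $\hat x$ merges while $\hat y$ splits. You say "a short check shows no net increase," but this is exactly where one must observe that the piece of $\hat y$ cut off by $v'=v-\gamma_n$ has size $v'-1/n=v-\gamma_n-1/n$, which lies in $(a-\gamma_n-1/n,\,b-\gamma_n-1/n]$ and so can be small only if $a$ is small — and the merged tile in $\hat x$ contains the (formerly unmatched) $I_{\bar x}$ plus a previously matched tile, leaving the partner of that matched tile in $\hat y$ now unmatched. The bookkeeping here (which tile becomes newly unmatched, and whether it matches the fragment of $\hat y$) is precisely the delicate part; asserting it without writing it down is where a reader would want detail.

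Finally, in the probability bound you write "each window contains at most $2^{j+1}$ admissible values of $v$" and then conclude $\le 2^{j+2}$ bad values total, dividing by $n-1$. But a window of length $2^j/n$ on the grid $\{2/n,\ldots,n/n\}$ contains at most $2^j+1$ grid points, not $2^{j+1}$; your stated bound is off by a factor and happens to still give the right final constant only because you then over-count by using $2\cdot 2^{j+1}=2^{j+2}$. You should state exactly which values of $v$ are bad (those within distance $2^j/n$ of an endpoint of the marked interval, or of the point $\gamma_n+1/n$) and count them cleanly; as written the constant-chasing does not quite parse.
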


\begin{rem}
  In particular, it holds that $U' \ge 2^{j-1}/n$.
\end{rem}

We now explain our strategy. On $A(\delta)$ we will expect that the unmatched components will remain of a size roughly of order at least $\delta$ for a while. In fact we will show that they will stay at least as big as $O(\delta^2)$ for a long time. Unmatched entries disappear when they are merged together. If all unmatched entries are of size at least $\delta^2$, we will see that with probability at least $\delta^8$, we have a chance to reduce the number of unmatched entries in every $3$ steps. Then a simple argument shows that after time $\Delta=\lceil \delta^{-9}\rceil$, $\bar X_\Delta$ and $\bar Y_\Delta$ are perfectly matched with a probability tending to one as $\delta \to 0$.

\begin{lemma}\label{L:big} There is $\delta_0$ such that if $\delta < \delta_0$, during $[0, \Delta]$, both $\bar X_s $ and $\bar Y_s$ always have an entry of size greater than $  \delta \theta(c) $ with probability at least $1- 2\delta^{1/2}$ for all $n$ sufficiently large.
\end{lemma}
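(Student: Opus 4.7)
The plan is to combine the Poisson--Dirichlet initial condition from Theorem \ref{thm:general_schramm} with a pathwise stability argument over the short window $[0,\Delta]$. First I would apply Theorem \ref{thm:general_schramm} at time $s_1$: since the corresponding random walk time $s_1/\rho$ satisfies $(s_1/\rho)k/n\to c>c_\Gamma$, the top entry $L_0/\theta(c)$ of $\bar X_0$ converges in distribution to the top entry $Z_1$ of a Poisson--Dirichlet random variable. From the GEM stick-breaking representation $Y_i=U_i\prod_{j<i}(1-U_j)$ one sees $Z_1\ge Y_1=U_1$ with $U_1\sim\mathrm{Unif}[0,1]$, hence $\P(Z_1\le x)\le x$. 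Thus for $n$ large enough, $\P(L_0<\delta^{1/2}\theta(c))\le\delta^{1/2}$, and the same bound holds for the top entry of $\bar Y_0$. A union bound gives that with probability at least $1-2\delta^{1/2}$, both initial partitions have an entry of size at least $\delta^{1/2}\theta(c)$.

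Second, I would establish pathwise that some descendant of this initial large entry stays above $\delta\theta(c)$ throughout $[0,\Delta]$. The crucial property is the uniform split rule: whenever a marked tile of size $a$ is split, the split location $v-1/n$ is uniform on $[0,a]$ and so the larger of the two resulting pieces has size at least $a/2$. Iterating, the largest descendant of $L_0$ after $k$ splits has size at least $L_0/2^k$. It would therefore suffice to bound the number of such splits by $\tfrac{1}{2}\log_2(1/\delta)$ with probability at least $1-\delta^{1/2}$, which gives $L_s\ge L_0\cdot 2^{-\log_2(1/\delta)/2}=L_0\delta^{1/2}\ge\delta\theta(c)$ throughout the window.

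The main obstacle will be precisely this bound on the number of splits affecting the largest descendant over $\Delta=\lceil\delta^{-9}\rceil$ steps, since a crude first-moment estimate gives an expectation of at most $\Delta\cdot L_0^2=O(\delta^{-8})$, vastly larger than $\log(1/\delta)$ and so useless. To overcome this I would exploit a key feature of the coupling: after splitting a tile of size $a$, the new marked tile is the smaller half (of size $v-1/n$, uniform on $[0,a]$), not the large half. Consequently further splits of the surviving large descendant within the same $\rho$-block become very unlikely, and it can only be split again after a subsequent refreshment time, of which there are at most $\Delta r/\rho\le\Delta$. At each refreshment, the probability of marking the large descendant equals its current fractional size. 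Combining this with the relaxed conditional uniformity (Definition \ref{defn:relaxed}) and a Chernoff-type concentration argument should deliver the required logarithmic bound, and the same reasoning applied to $\bar Y$ together with a union bound then yields the stated $1-2\delta^{1/2}$ probability.
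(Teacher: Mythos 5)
Your first step is fine: the GEM bound $\P(Z_1\le x)\le x$ is correct, and applying Theorem~\ref{thm:general_schramm} once at time $s_1$ to control $\bar X_0$ and $\bar Y_0$ works. The difficulty is in the pathwise stability argument, and you have in fact identified the obstacle yourself: the number of splits affecting the large descendant over $\Delta=\lceil\delta^{-9}\rceil$ steps is of order $\delta^{-8}$, far larger than the $\tfrac12\log_2(1/\delta)$ you need. Your proposed fix does not rescue this. First, it rests on a misreading of the coupling: after a split of a marked tile of size $a$, the newly marked piece has size $v-1/n$, which is (approximately) uniform on $(0,a]$, not the smaller half. Second, even if the smaller half were always marked, the large descendant would be re-marked at every refreshment time with probability comparable to its fractional size; for random transpositions every step is a refreshment, so this gives roughly $\Delta\cdot\delta^{1/2}\approx\delta^{-8.5}$ re-markings and hence still vastly more than $O(\log(1/\delta))$ splits. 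There is no Chernoff argument that closes a gap of eight orders of magnitude in $\delta$. More fundamentally, over $\Delta$ steps the split-merge dynamics is essentially at its Poisson--Dirichlet equilibrium, where the identity of the largest component keeps changing; tracking a single initial piece cannot give the control you want.

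The paper avoids pathwise tracking entirely. It applies Theorem~\ref{thm:general_schramm} at \emph{each} of the $\Delta$ times in the window (this is legitimate because $\Delta$ is a constant independent of $n$, so a union bound over finitely many times is allowed) and uses a much sharper tail bound for the Poisson--Dirichlet: for $\{Z_1\le\delta\}$ one must have the first ten size-biased picks $Z_1^*,\dots,Z_{10}^*$ all at most $\delta/(1-\delta)^9$, giving $\P(Z_1\le\delta)\le\delta^{10}/(1-\delta)^{9!}\le\delta^{9+1/2}$ for $\delta$ small. Summing over the $O(\delta^{-9})$ time steps and the two chains gives an expected number of bad times at most $O(\delta^{1/2})$, and Markov's inequality finishes. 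The key idea you are missing is therefore twofold: the tail of the largest PD entry decays as $\delta^{M}$ for any fixed $M$ (here $M=10$ suffices), and one should invoke the PD approximation at every time step rather than only at the initial time.
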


\begin{proof}
  Let $\delta_0>0$ be such that $(1-\delta)^{9!} \ge \delta^{1/2}$ for all $\delta \le \delta_0$; we may assume without loss of generality that $\delta \le \delta_0$. Let $Z= (Z_1, \ldots)$ be a Poisson-Dirichlet random variable on $\Omega_\infty$ and let $(Z^*_1,\dots)$ denote the size biased ordering of $Z$. Recall that $Z^*_1$ is uniformly distributed over $[0,1]$, $Z^*_2$ is uniformly distributed on $[0, 1-Z^*_1]$, and so on. For the event $\{Z_1 \le \delta\}$ to occur it is necessary that $Z^*_1\le \delta, Z^*_2 \le \delta/(1-\delta), \ldots, Z^*_{10} \le \delta/(1-\delta)^9$. This has probability at most $\delta^{10}/(1- \delta)^{9!}$.
  %Note that since $\delta < \delta_0$, we have that $(1-\delta)^{9!} \ge \delta^{1/2}$.
  Thus
  $$\P(Z_1 \le \delta) \le \frac{\delta^{10} }{ (1- \delta)^{9!} }\le \delta^{9 + 1/2}.$$
  Summing over $\Delta = \lceil \delta^{-9}\rceil$ steps we see that the expected number of times during the interval $[0, \Delta]$ such that $\bar X_s$ or $\bar Y_s$ don't have a component of size at least $\theta(c) \delta n$ is less than $\delta^{1/2}$ as $n \to \infty$ and is thus less than $2\delta^{1/2}$ for $n$ sufficiently large, by Theorem \ref{thm:general_schramm} (note that we can apply the result because this calculation involves only a finite number of components). The result follows.
\end{proof}

We now check that all unmatched components really do stay greater than $\delta^2$ during $[0, \Delta]$. Let $T_\delta$ denote the first time $s$ that either $\bar X_s$ or $\bar Y_s$ have no cycles greater than $ \delta \theta (c) n$ (suppose without loss of generality that $\delta$ is small enough that $\delta^2 \le \delta \theta(c)$).

\begin{lemma}\label{L:big2}
  On $A(\delta)$, for all $s \le T_\delta \wedge \Delta$, all unmatched components stay greater than $\delta^2$ with probability at least $1-O(\delta)
  % (48/\theta(c))^{10}
  $, where the constant implied in $O(\delta)$ can depend on $c$ but not on $\delta$.
\end{lemma}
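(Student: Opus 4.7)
Plan: iterate Lemma \ref{L:coupling} across the $\Delta$ transposition steps in $(s_1, s_2]$, tracking $U_s$, the size of the smallest unmatched entry between $\bar X_s$ and $\bar Y_s$. On $A(\delta)$ we begin with $U_0 > \delta$, and we want $U_s > \delta^2$ for every $s \le T_\delta \wedge \Delta$. I would proceed by a dyadic decomposition of $[\delta^2, \delta]$.

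By the deterministic part of Lemma \ref{L:coupling}, $U_{s+1} \ge U_s/2 - 1/n$, so $U$ can only drop by essentially a factor of two per step. Hence to reach $U_s \le \delta^2$ starting from $U_0 > \delta$ requires successively crossing each of the dyadic scales in $[\delta^2, \delta]$. Let $K = \lceil \log_2(1/\delta) \rceil$ and let $\tau_k = \inf\{s \ge 0 : U_s \le \delta/2^k\}$ for $k = 0, 1, \ldots, K$. Then the bad event $\{U_s \le \delta^2 \text{ for some } s \le T_\delta \wedge \Delta\}$ is contained in $\{\tau_K \le T_\delta \wedge \Delta\}$, which I bound by chaining $\P(\tau_k \le T_\delta \wedge \Delta \mid \tau_{k-1})$ for $k = 1, \ldots, K$.

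For each link in the chain, the probabilistic part of Lemma \ref{L:coupling} gives, when $U_s$ is at dyadic scale $2^j/n$, a per-step downcrossing probability at most $2^{j+2}/n$. Restarting at $\tau_{k-1}$ using the Markov property of the coupling, the next downcrossing requires a drop while $U$ sits at scale $\sim \delta/2^{k-1}$. A naive union bound over the at most $\Delta$ available steps gives $\Delta \cdot \delta/2^{k-1}$, which is too weak; the improvement is to absorb $\Delta$ into a constant depending on $\theta(c)$ by using that on $\{s \le T_\delta\}$ the relevant dynamics live inside the giant cycle of relative size $\theta(c)n$, so the effective rate at scale $\delta/2^{k-1}$ is $O(\delta/(2^{k-1}\theta(c)))$. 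Telescoping over $k$ then yields a product of the form $\delta \cdot (16/\theta(c))^K$, which is at most $\delta(16/\theta(c))^{10}$ in the regime of $\delta$ where $K \le 10$; smaller $\delta$ can be handled by applying the deterministic bound over a block of $\sim K$ steps before restarting the chain.

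The main obstacle is the non-monotonicity of $U_s$: coalescences among unmatched components can raise $U$, so $\tau_k$ is not a clean downcrossing time of a monotone process. This is handled by observing that any upward fluctuation only delays subsequent hitting times (making the bad event less likely) and by restarting Lemma \ref{L:coupling} afresh at each $\tau_k$, which is legitimate by the Markov property of the coupling; the dyadic-scale crossings then contribute essentially independent-in-scale bounds. The restriction $s \le T_\delta$ is crucial to ensure the giant cycle dominates throughout, so that the normalization by $\theta(c)$ in the per-scale factor is meaningful.
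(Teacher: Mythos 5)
Your proposal identifies the right raw ingredients --- dyadic scales, Lemma~\ref{L:coupling}, and the role of the giant component as a rescue mechanism --- but it omits the key device of the paper's proof: a stochastic comparison of the scale of the smallest unmatched entry to an explicit birth-death chain, followed by a gambler's-ruin computation. Your remark that ``any upward fluctuation only delays subsequent hitting times (making the bad event less likely)'' is the point at which the argument goes soft: it is qualitatively correct but carries no quantitative content, whereas the whole force of the lemma comes from showing that the upward drift dominates the downward one at every scale in $[j_1,j_0]$. The paper does this by classifying scale-change moves into \emph{failures} (drop one scale, probability $r_j = 2^{j+2}/n$) and \emph{mild successes} (merge with a tile of size $\ge \theta(c)\delta n$, available because $s\le T_\delta$), derives the upward probability $p_j = 1 - 6r_j/(r_j+\theta(c)\delta)$ for the comparison birth-death chain $(v_n)$, and bounds the escape probability $\P^{j_0}(\tau_{j_1}<\tau_{j_0})$ by $\prod_j q_j/p_j$; only then does it extract the ten lowest factors (each $O(\delta/\theta(c))$) and union-bound over the $O(\delta^{-9})$ possible excursions.

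Your chaining of per-scale downcrossing probabilities is not a substitute for this. Crossing from scale $k-1$ to scale $k$ is not a single step of the coupling but an excursion that may move up and come back, so ``restarting afresh at each $\tau_k$'' does not give a product of per-step rates; the birth-death comparison is needed precisely to account for this. Your claimed effective rate $O(\delta/(2^{k-1}\theta(c)))$ also carries an extraneous factor of $\delta$: the correct conditional downcrossing probability at scale $\sim \delta n/2^{k-1}$ is $r_j/(r_j+\theta(c)\delta) \sim 2^{-(k-1)}/\theta(c)$, which is small only near the bottom (scale $\sim\delta^2 n$) --- exactly why the paper isolates the ten lowest terms rather than telescoping the full product. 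Finally, the restriction to $K\le 10$, i.e.\ $\delta\ge 2^{-10}$, is a genuine defect, since the lemma is invoked as $\delta\to 0$; the proposed fallback of ``applying the deterministic bound over a block of $\sim K$ steps before restarting the chain'' is not an argument.
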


\begin{proof}
  Say that a number $x \in [0,1]$ is in scale $j$ if $2^j/n \le x < 2^{j+1}/n$. For $s\ge 0$, let $U(s)$ denote the scale of the smallest unmatched entry of $\bar X_s, \bar Y_s$.
  Let $j_0$ be the scale of $\delta $, and let $j_1$ be the integer immediately above the scale of $\delta^2$.

  Suppose for some time $s\le T_\delta$, we have $U(s) = j$ with $j_1 \le j \le j_0$, and the marked tile at time $s$ corresponds to the smallest unmatched entry. Then after this transposition we have $U(s+1) \ge j-1$ by the properties of the coupling (Lemma \ref{L:coupling}). Moreover, $U(s+1) = j-1$ with probability at most $r_j = 2^{j+2} / n$. Furthermore, since $s \le T_\delta$, we have that this marked tile merges with a tile of size at least $\theta(c) \delta$ with probability at least $\theta (c)  \delta $ after the transposition. We call the first occurrence a \emph{failure} and the second a \emph{mild success}.

  Once a mild success has occurred, there may still be a few other unmatched entries in scale $j$, but no more than five since the total number of unmatched entries is decreasing, and there were at most six initially.  And therefore if six mild successes occur before a failure, we are guaranteed that $U(s+1) \ge j+1$. We call such an event a \emph{good success}, and note that the probability of a good success, given that $U(s)$ changes scale, is at least $ p_j = 1- 6 r_j / (r_j + \theta(c)  \delta)$. We call $q_j = 1-p_j$.

  Let $\{q_i\}_{i\geq 0}$ be the times at which the smallest unmatched
  entry changes scale, with $q_0$ being the first time the smallest unmatched entry
  is of scale $j_0$.
  Let $\{U_i\}$ denote the scale of the smallest unmatched
  entry at time $q_i$. Introduce a birth-death chain on
  the integers, denoted $v_n$, such that $v_0=j_0$ and
  \begin{equation}
    \label{birthdeath}
    \P(v_{n+1}=j-1|v_n=j)=
    \left\{\begin{array}{ll}
      1& \mbox{\rm if } \, j=j_0\\
      0& \mbox{\rm if }\, j=j_1\\
      q_j&\mbox{\rm otherwise},
    \end{array}
    \right.
  \end{equation}
  and
  \begin{equation}
    \label{birthdeath2}
    \P(v_{n+1}=j+1|v_n=j)=
    \begin{cases}
      p_j, & j>j_1\\
      0, & j=j_1.
    \end{cases}
  \end{equation}
  Then it is a consequence of the above observations that $(U_i, i \ge 1)$ is stochastically dominating $(v_i, i \ge 1)$ for $s \le T_\delta$. Set
  $\tau_j=\min\{n>0: v_n=j\}$.
  An analysis of the birth-death chain defined by
  \eqref{birthdeath}, \eqref{birthdeath2} gives that
  \begin{align*}
    \P^{j_0}(\tau_{j_1}<\tau_{j_0})&  = \frac1{\sum_{j=j_1+1}^{j_0} \prod_{m=j}^{j_0-1} \frac{p_m}{q_m}} \leq \prod_{j=j_1+1}^{j_0-1}
    \frac{q_j}{p_j}
%nb3 I added some detail here.
  \end{align*}
  (see, e.g., Theorem (3.7) in Chapter 5 of \cite{durrett_probability}). Note also that $q_j \le 6r_j / (\delta \theta(c))$, so $q_j /p_j \lesssim  r_j /(\delta \theta(c))$.  Moreover all the $r_j$ in this product satisfy $ r_j \lesssim  \delta$.
  Thus, by considering the $10$ terms with lowest index in the product above (and note that for $\delta >0$ small enough, there are at least $10$ terms in this product), we deduce that
  $ \P^{j_0}(\tau_{j_1}<\tau_{j_0})$ decays faster than $O(\delta)^{10}$. Since $T_\delta \wedge \Delta \leq \Delta = \lceil \delta^{-9}\rceil $ we conclude that the probability that $U(s) = j_1$ before $T_\delta \wedge \Delta$ is at most $O(\delta) $.
\end{proof}

We are now going to prove that on the event $A(\delta)$, after time $\Delta$ there are no unmatched entries with probability tending to one as $n\rightarrow\infty$ and $\delta \to 0$. The basic idea is again to exploit that there are initially at most six unmatched parts, and this number cannot increase. We need a few preparatory lemmas which construct a scenario which lead to a coagulation of two unmatched entries in each copy, in three steps. Let $T'_\delta $ be the first time one of the unmatched entries is smaller than $\delta^2$. Let $\cF_s$ denote the filtration generated by $(\bar X_1, \ldots, \bar X_s)$ including the marked tiles at the end of each step up to time $s$. Let $\cK_s$ be the event that step $s$ results in two unmatched entries being merged in both copies, so our first goal (achieved in Lemma \ref{L:step3}) will be to get a lower bound on the probability of $\cK_s$.
%or $T_\delta$, whichever comes first (recall that $T_\delta$ is the first time either $\bar X$ or $\bar Y$ has no component of size at least $\delta$).

\medskip \noindent \textbf{Step 1.} We show that with good probability both marked tiles are unmatched at the end of a step (and thus also at the beginning of the next step).

\begin{lemma}\label{L:Marked_unmatched}
Let $\cM_s$ be the event that at the end of step $s$, both marked tiles are unmatched.
 Then
$$
\P(\cM_s \cup \cK_{s} | \cF_{s-1}; T'_\delta \ge s ) \ge \delta^2.
$$
\end{lemma}

\begin{proof}
%If $s$ is a refreshment time then the result is trivial since it suffices for the first marker $u$ (which is uniform) to fall in an unmatched tile, which occurs with probability at least $\delta^2$ if $T'_\delta \ge s$.
%
%Hence assume that $s$ is not a refreshment time.
Let $u,v$ be the two markers for step $s$. If the tile containing $u$ was matched, then it suffices for $v$ to fall in an unmatched tile (then $\cM_s$ occurs), which occurs again with probability at least $\delta^2$. If however the tile containing $u$ was unmatched, the copy which contains the smallest of these two unmatched tiles necessarily contains at least another unmatched tile. It then suffices for $v$ to fall in that tile. Indeed if we are very lucky and the other copy also just happen to have two unmatched entries this might lead to a reduction in the number of unmatched entries, in which case $\cK_{s}$ has occurred. Otherwise we have simply shuffled the unmatched entries and $v$ is now in an unmatched entry, so $\cM_s$ holds. Either way, the conditional probability is at least $\delta^2$.
\end{proof}

\medskip \noindent \textbf{Step 2.} We show that if the marked tiles are unmatched, with good probability we can get to a ``balanced configuration" where both copies contain at least two unmatched tiles, and that the marked tiles at the end of the step are both unmatched.

\begin{lemma}\label{L:step2}
Suppose $s$ is not a refreshment time.  Let $\cB_s$ denote the event that $\bar X_s$ and $\bar Y_s$ contain at least two unmatched entries each, and that the second marker is in one of these unmatched tiles for both $\bar X_s, \bar Y_s$ at the end of the step (i.e., $\cM_{s}$ holds). Then
  \begin{equation}\label{eq:step2notR}
\P(\cB_s \cup \cK_{s} | \cF_{s-1}; T'_\delta \ge  s; \cM_{s-1} ) \ge \delta^2.
  \end{equation}
 Suppose now that $s$ is a refreshment time. Then
  \begin{equation}\label{eq:step2R}
\P(\cB_s  | \cF_{s-1}; T'_\delta \ge s) \ge \delta^4.
  \end{equation}
\end{lemma}

\begin{proof}
Let $u,v$ be the two markers for step $s$. Suppose first that $s$ is not a refreshment time, so we aim to prove \eqref{eq:step2notR}. We treat several cases, according to whether
%$s$ is a refreshment time, and whether
$\cB_{s-1}$ holds or not. We start by assuming that $\cB_{s-1}$ does not hold. The idea is that in that case, at time $s-1$, one copy (say $\bar Y_{s-1}$) has one unmatched entry, while the other one has at least three. It then suffices to fragment the unmatched entry in $\bar Y_{s-1}$ and to coagulate the other two entries in $\bar X_{s-1}$. Since $\cM_{s-1}$ holds, and $s$ is not a refreshment time, it suffices for $v$ (the marker corresponding to the copy which has the smallest unmatched entry, which is necessarily $\bar X_{s-1}$) to fall in any of the other unmatched entries of $\bar X_{s-1}$: this necessarily results in a balanced configuration. Note also that this always results in both marked tiles to be unmatched at the end of the step, so $\cB_s$ indeed holds in that case. Moreover, this event has probability at least $\delta^2$ since $T'_\delta \ge s$.

Suppose now that $\cB_{s-1}$ holds. Then let us show directly $\cK_s$ can occur with good probability. Indeed, if the second marker $v'  = \Phi(v)$ (this is the marker associated with the copy, say $\bar Y_s$, that contains the larger of the two marked unmatched tiles) falls in another unmatched tile of $\bar Y_s$, then in this
 case a coagulation of two unmatched entries is guaranteed to occur in both copies. Hence $\cK_{s}$ occurs with probability at least $\delta^2$. Either way,  \eqref{eq:step2notR} is proved.

Now suppose that $s$ is a refreshment time. In that case it suffices to require that the first marker falls in an unmatched tile (which occurs with probability $\delta^2$) and from then on we argue exactly as in the proof of \eqref{eq:step2notR} to obtain a proof of \eqref{eq:step2R}. All in all the lemma is proved.
\end{proof}

We point out that, combining Lemmas \ref{L:Marked_unmatched} and \ref{L:step2}, regardless of whether $s$ is a refreshment time, $\P( \cB_s | \cF_{s-1}) \ge \delta^4$.

\medskip \noindent \textbf{Step 3.} Having reached a balanced configuration with one marked unmatched entry in both copies, we show that a coagulation of two unmatched entries in both copies has a good chance of occurring. In that case, the number of unmatched entries has decreased by two or four.

\begin{lemma}\label{L:step3} We have
$$
\P(\cK_s | \cF_{s-1}; \cB_{s-1}; T'_\delta \ge s) \ge \delta^4.
$$
\end{lemma}

\begin{proof}
We again need to distinguish between the cases where $s$ is refreshment time or not. If not, then since $\cB_{s-1}$ holds, then the first marker is in an unmatched tile for both copies. If the second marker $v'$ corresponding to the copy with the larger of these two unmatched tiles falls in a different unmatched tile (which has probability at least $\delta^2$) then a coagulation is guaranteed to occur in both copies so $\cK_{s}$ holds.

If $s$ is refreshment time, then the same argument applies, but the first marker must first fall in an unmatched component (which has probability at least $\delta^2$ since $T'_\delta \ge s$). This gives a lower bound of $\delta^4$ on the probability of $\cK_s$, as desired.
\end{proof}

Combining these three steps, it is now relatively easy to deduce the following:

\begin{lemma}\label{lemma:X_not_Y}
  We have that for all $\delta>0$ small enough
  \[
    \lim_{\delta \rightarrow 0}\limsup_{n \rightarrow \infty}\P(\bar X_\Delta \neq \bar Y_\Delta|A(\delta)) =0
  \]
\end{lemma}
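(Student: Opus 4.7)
The plan is to reduce everything to the high-probability ``nice'' event $B := B_1 \cap B_2$ where $B_1, B_2$ are the events whose probabilities are lower-bounded in Lemmas \ref{L:big} and \ref{L:big2} respectively. On $A(\delta) \cap B$, throughout $[0, \Delta]$ both $\bar X_s$ and $\bar Y_s$ contain a giant tile of size at least $\delta \theta(c)$, and every unmatched entry has size at least $\delta^2$. Since
\[
\P(B^c \mid A(\delta)) \le 2\delta^{1/2} + \delta \left( 16/\theta(c)\right)^{10} \longrightarrow 0 \quad \text{as } \delta \to 0
\]
uniformly in $n$, it suffices to show that $\limsup_{n\to\infty}\P(\bar X_\Delta \neq \bar Y_\Delta \mid A(\delta) \cap B) \to 0$ as $\delta \to 0$.

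Let $N_s$ denote the total number of unmatched parts in $(\bar X_s, \bar Y_s)$. By the final assertion of Lemma \ref{L:coupling}, $s \mapsto N_s$ is non-increasing. As noted immediately before Lemma \ref{L:unmatched_large_0}, initially $N_0 \in \{0, 4, 6\}$, hence $N_0 \le 6$. The problem therefore reduces to showing that, on $B$, starting from any time $s < \Delta$ with $N_s \ge 1$, within a bounded number of transpositions the count strictly decreases with a probability at least $p(\delta) > 0$, where $p(\delta) \cdot \Delta \to \infty$ as $\delta \to 0$.

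I would produce such a decrease at refreshment times (Definition \ref{def:refreshment}), which occur with positive frequency within $(s_1, s_2]$. At a refreshment time $s+1$, the marker $u$ is fresh and uniform on $\{1/n, \ldots, 1\}$; since the tilings are aligned with unmatched tiles on the left, the probability that $u$ lands in the unmatched portion of both $\bar X_s$ and $\bar Y_s$ is bounded below by the total unmatched mass, which is $\ge \delta^2$. Conditioning on this, the second marker $v$ is independent and uniform, and with probability at least $|I_{\bar x}| \ge \delta^2$ lies in the marked unmatched tile $I_{\bar x}$; a short case analysis of the piecewise map $\Phi$ in \eqref{Phi} then shows that $v' = \Phi(v) \in I_{\bar y}$, and that the two fragments created in $\bar X_{s+1}$ share one common length with the two fragments created in $\bar Y_{s+1}$, producing at least one newly matched pair and hence $N_{s+1} < N_s$. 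This yields $\P(N_{s+1} < N_s \mid \mathcal F_s) \ge c \delta^4$ on $B \cap \{N_s \ge 1\}$ at refreshment times, for some constant $c = c(\theta(c)) > 0$.

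Finally, grouping steps into disjoint windows of bounded size each containing at least one refreshment time, and using the conditional independence of fresh markers across refreshment times, the number of strict decreases of $N_s$ over $[0, \Delta]$ stochastically dominates a $\mathrm{Binomial}(\Omega(\delta^{-9}), \Omega(\delta^4))$ variable whose mean is of order $\delta^{-5} \to \infty$. Since at most $6$ decreases suffice to bring $N$ to $0$, a standard concentration estimate gives $\P(N_\Delta > 0 \mid A(\delta) \cap B) \to 0$ as $\delta \to 0$, which is exactly the desired conclusion. The main subtle point is the verification that a refreshment selecting an unmatched tile followed by $v \in I_{\bar x}$ actually produces a matched fragment in both walks; this is a short but slightly tedious case check based on the three pieces of the piecewise definition \eqref{Phi}, together with the convention that unmatched entries occupy the left of the interval $(0,1]$.
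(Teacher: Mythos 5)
Your setup is sound: conditioning on the high-probability event $B$ from Lemmas~\ref{L:big} and \ref{L:big2}, using the monotonicity of the combined unmatched count $N_s$ from Lemma~\ref{L:coupling}, noting $N_0\le 6$, and reducing to showing a uniformly positive per-window probability of a strict decrease of $N$. This is the same scaffolding as the paper's proof. But the key step --- the claim that $v\in I_{\bar x}$ yields $N_{s+1}<N_s$ --- is wrong, and the error is not repairable within the fragmentation mechanism you describe.

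When the (unmatched) marked tiles $I_{\bar x}$, $I_{\bar y}$ fragment, the map $\Phi$ is designed precisely so that \emph{exactly one} of the two fragments in $\bar X_{s+1}$ has the same length as one of the two fragments in $\bar Y_{s+1}$ (you can see this directly from \eqref{Phi}: in the case $1/n\le v\le\gamma_n+1/n$ one matches the left fragments $v-1/n$, and in the case $\gamma_n+1/n<v\le a$ one matches the right fragments $a-(v-1/n)$; since $a\ne b$, the other pair of fragments have distinct lengths). So you replace the unmatched pair $(I_{\bar x},I_{\bar y})$ by one newly matched pair \emph{plus one newly unmatched pair}, and $N_{s+1}=N_s$. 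This is exactly why the coupling guarantees $N$ never increases; it is not a mechanism for $N$ to decrease. To actually decrease $N$ you need a \emph{coagulation} of unmatched entries: the paper's event $K_s$ requires $v$ to land in a second unmatched tile $R\ne I_{\bar x}$, so that $I_{\bar x}$ merges with $R$ (and, via $\Phi=\id$ on $(b,Q]$, $I_{\bar y}$ correspondingly merges with an unmatched tile), removing two unmatched entries. For such an $R$ to exist, \emph{both} copies must have at least two unmatched entries, which is not automatic from $N_s\ge 1$: a ``bad'' configuration with one unmatched entry in one copy and three (or five) in the other can occur. The paper therefore spends extra steps (the event $F_s$, $\P(F_s)\ge\delta^4/2$) to reach a good configuration before attempting the coagulation ($\P(K_s\mid F_s)\ge\delta^4$), landing on a per-window probability $\ge\delta^8/2$ over windows of length $4$, rather than your $\Omega(\delta^4)$. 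Your binomial argument would still close if the per-window probability were $\Omega(\delta^8)$ over bounded windows (the mean is then $\Omega(\delta^{-1})\to\infty$), so the scheme is salvageable, but only after replacing the fragmentation event by a coagulation event and inserting the good-configuration preparatory step.
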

\begin{proof}
%  Suppose $\delta>0$ is sufficiently small and condition throughout on the event $A(\delta)$. Let $T'_\delta $ be the first time one of the unmatched entries is smaller than $\delta^2$ or $T_\delta$, whichever comes first. By Lemma \ref{L:big} and Lemma \ref{L:big2} we have that for large $n$,
%  \begin{equation}\label{eq:T'_pba}
%    \P(T'_\delta \ge \Delta | A(\delta))\geq  1- \delta (16/ \theta(c))^{10} - 2\delta^{1/2}.
%  \end{equation}
%
  %Henceforth condition on the event $\{T'_\delta \geq \Delta\}$.
  Initially there are at most $6$ unmatched entries. Due to parity there can be either $6,4$ or $0$ unmatched entries (note in particular that 2 is excluded, as a quick examination shows that no configuration can give rise to two unmatched entries). Furthermore, form the properties of the coupling, the number of unmatched entries either remains the same or decreases at each step. Once all the entries are matched they remain matched thereafter.

We have just shown that in any sequence of three transpositions, the probability that the number of unmatched decreases is at least $\delta^8$, unless $T'_\delta$ occurs during this sequence. Let $Z$ be a binomial random variable with parameters $m = \lfloor (\Delta -1)/3 \rfloor $ and $p = \delta^8$. Thus the event that $\{X_\Delta \neq Y_\Delta\}$ implies that there has been at most one success (i.e. $Z \le 1$, so
\begin{align*}
\P(X_\Delta \neq Y_\Delta | A(\delta)) &\le \P( Z\le 1) + \P( T'_\delta\le \Delta | A(\delta))\\
& \le (1- p)^{m} + m p(1-p)^{m-1} + O(\delta).
\end{align*}
Since $\Delta \gtrsim \delta^{-9}$ and $p = \delta^8$, the first two terms tend to 0 as $\delta \to 0$, which proves Lemma \ref{lemma:X_not_Y}.
\end{proof}

\subsubsection{Coupling for \texorpdfstring{$(s_2,s_3]$}{(s2,s3]}}

The walks $\tilde X^{\id}$ and $\tilde X^{\tau_1\circ \tau_2}$ are uniformly distributed on their conjugacy class. Thus one can couple $\tilde X^{\id}$ and $\tilde X^{\tau_1\circ \tau_2}$ so that
\begin{itemize}
  \item on the event $A(\delta)^c$ we have that $d(\tilde X^{\id}_{s_2}, \tilde X^{\tau_1\circ\tau_2}_{s_2})=2$,
  \item we have that using Lemma \ref{lemma:X_not_Y}
    \[
      \liminf_{\delta \downarrow 0}\liminf_{n \rightarrow \infty}\P(\tilde X^{\id}_{s_2}= \tilde X^{\tau_1\circ\tau_2}_{s_2}|A(\delta))=1,
    \]
  \item on the event $\{\tilde X^{\id}_{s_2}\neq \tilde X^{\tau_1\circ\tau_2}_{s_2}\}$, note that the walks $\bar X$ and $\bar Y$ have at most $6$ unmatched entries. Hence the coupling is such that $d(\tilde X^{\id}_{s_2}, \tilde X^{\tau_1\circ\tau_2}_{s_2})\leq 4$ no matter what.
\end{itemize}

Combining this with Lemma \ref{L:unmatched_large_0} we have just shown the following lemma.

\begin{lemma}\label{lemma:Sn_coupling}
  There exists a coupling of $\tilde X^{\id}$ and $\tilde X^{\tau_1 \circ \tau_2}$ such that
  \[
    \limsup_{\delta \downarrow 0}\limsup_{n \rightarrow \infty}\E[d(\tilde X^{\id}_{s_2}, \tilde X^{\tau_1\circ\tau_2}_{s_2})] \leq 2(1-\theta(c)^4)
  \]
\end{lemma}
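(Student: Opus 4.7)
The plan is to patch together the partial couplings defined on the intervals $[0,s_1]$ and $(s_1,s_2]$ and to bound the expected distance at time $s_2$ by conditioning on the event $A(\delta)$ introduced before Lemma~\ref{L:unmatched_large_0}. On $[0,s_1]$, couple via $\tilde X^{\tau_1\circ\tau_2}_{s_1} = \tilde X_{s_1}\circ \tau_1\circ \tau_2$ using Lemma~\ref{lemma:transpo_end}, so that $d(\tilde X^{\id}_{s_1},\tilde X^{\tau_1\circ\tau_2}_{s_1}) = 2$ almost surely. On $A(\delta)^c$ (which is measurable at time $s_1$), extend to $(s_1,s_2]$ by applying the same conjugacy-class elements to both walks, preserving the distance at exactly $2$ throughout. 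On $A(\delta)$, run instead the Berestycki--Schramm--Zeitouni coupling of Section~\ref{subsec:I_2} on the cycle-partition level: by Lemma~\ref{L:coupling} the number of unmatched entries is non-increasing and hence bounded by $6$ throughout, while by Lemma~\ref{lemma:X_not_Y} the cycle partitions $\bar X_\Delta$ and $\bar Y_\Delta$ agree with probability tending to one in the double limit $n\to\infty,\,\delta\to 0$. Conjugacy-invariance of the marginal laws permits a relabelling that lifts this partition-level matching to $\tilde X^{\id}_{s_2} = \tilde X^{\tau_1\circ\tau_2}_{s_2}$ on $\{\bar X_\Delta = \bar Y_\Delta\}$, and to a permutation-level coupling satisfying $d \le 4$ otherwise.

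With this three-regime coupling in hand, decompose
\begin{align*}
\E\bigl[d(\tilde X^{\id}_{s_2},\tilde X^{\tau_1\circ\tau_2}_{s_2})\bigr]
&\le 2\,\P(A(\delta)^c) + 4\,\P\bigl(A(\delta),\ \tilde X^{\id}_{s_2}\neq \tilde X^{\tau_1\circ\tau_2}_{s_2}\bigr),
\end{align*}
the zero contribution on $A(\delta)\cap\{\tilde X^{\id}_{s_2}=\tilde X^{\tau_1\circ\tau_2}_{s_2}\}$ having been absorbed. Lemma~\ref{L:unmatched_large_0} yields $\limsup_{n\to\infty}\P(A(\delta)^c) \le 1-\theta(c)^4(1-\delta)^4$, and Lemma~\ref{lemma:X_not_Y} yields $\limsup_{\delta\downarrow 0}\limsup_{n\to\infty}\P(A(\delta),\,\tilde X^{\id}_{s_2}\neq \tilde X^{\tau_1\circ\tau_2}_{s_2}) = 0$. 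Combining and letting $\delta\downarrow 0$ afterwards gives
\begin{equation*}
\limsup_{\delta\downarrow 0}\limsup_{n\to\infty}\E\bigl[d(\tilde X^{\id}_{s_2},\tilde X^{\tau_1\circ\tau_2}_{s_2})\bigr] \le 2\bigl(1-\theta(c)^4\bigr),
\end{equation*}
which is the desired bound.

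No real obstacle remains: the quantitative bounds on $\P(A(\delta))$ and on the matching probability are already established, and the monotonicity of the number of unmatched entries follows from Lemma~\ref{L:coupling}. The only point requiring a moment of thought is the lift from an equality of (unordered) cycle partitions to an actual equality of permutations; this uses nothing more than the fact that the conditional distribution of each walk given its cycle structure is uniform on the associated conjugacy class, a direct consequence of Lemma~\ref{lemma:transpo_end}. Everything else is bookkeeping of the three cases enumerated in the bullet list preceding the statement.
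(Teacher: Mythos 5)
Your proposal is correct and follows essentially the same route as the paper: you reuse the three bulleted properties of the coupling (distance $2$ on $A(\delta)^c$, high-probability exact matching on $A(\delta)$ via Lemma \ref{lemma:X_not_Y}, and the $d\le 4$ bound otherwise via the monotone number of unmatched entries from Lemma \ref{L:coupling}), combine with the lower bound on $\P(A(\delta))$ from Lemma \ref{L:unmatched_large_0}, and pass to the limits in the same order. The decomposition of the expectation and the final limit computation coincide with the paper's implicit argument.
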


The theorem now follows immediately:

\begin{proof}[Proof of Theorem \ref{thm:curv}]
It remains to see the coupling during the time interval $(s_2,s_3]$. During this time interval we apply the same transpositions to both $\tilde X^{\id}$ and $\tilde X^{\tau_1\circ\tau_2}$ which keeps their distance constant throughout $(s_2,s_3]$. Thus we have that
\[
  d(X^{\id}_t,X^{\tau_1\circ\tau_2}_t)=d(\tilde X^{\id}_{s_3},\tilde X^{\tau_1\circ \tau_2}_{s_3})=d(\tilde X^{\id}_{s_2},\tilde X^{\tau_1\circ \tau_2}_{s_2}).
\]
Thus using Lemma \ref{lemma:Sn_coupling} we see that \eqref{goal0} holds which finishes the proof.
\end{proof}

\appendix
\section{Lower bound on mixing}\label{S:lb}

In this section we give a proof of the lower bound on $\tmix(\delta)$ for some arbitrary $\delta\in (0,1)$. This is for the most part a well-known argument, which shows that the number of fixed points at time $(1-\epsilon) \tmix$ is large. In the case of random transpositions or more generally of a conjugacy class $\Gamma$ such that $|\Gamma|$ is finite, this follows easily from the coupon collector problem. When $|\Gamma|$ is allowed to grow with $n$, we present here a self-contained argument for completeness.

Let $\Gamma\subset \Sn$ be a conjugacy class and set $k= k(n) = |\Gamma|$.

\begin{lemma}
  We have that for any $\epsilon \in (0,1)$,
  \[
    \lim_{n \rightarrow \infty}d_{TV}((1-\epsilon)\tmix)=1
  \]
\end{lemma}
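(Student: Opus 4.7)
The plan is to produce a statistic that separates $Y^{\id}_t$ from $\mu$ at time $t:=(1-\epsilon)\tmix=(1-\epsilon)n\log n/k$, namely the number of fixed points. Call $x\in\{1,\dots,n\}$ \emph{untouched} at time $t$ if $x$ lies outside the support of every $\gamma_i$ applied up to time $t$. Every untouched point is automatically a fixed point of $Y^{\id}_t$, so if $F_t$ denotes the number of untouched points and $G_t=\#\{x:Y^{\id}_t(x)=x\}$ then $G_t\ge F_t$. I will show that $F_t$ is large (of order $n^{\epsilon/2+o(1)}$) with high probability, whereas under the invariant measure $\mu$ the number of fixed points is $O(1)$ in probability, and conclude that $A_n:=\{\sigma:\#\{x:\sigma(x)=x\}\ge n^{\epsilon/3}\}$ witnesses total variation distance close to $1$.

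First, by symmetry the probability that a uniform $\gamma\in\Gamma$ fixes a given point is $(n-k)/n$, and since the $\gamma_i$ are i.i.d., conditionally on $N_t=m$ we have $\E[F_t\mid N_t=m]=n(1-k/n)^m$. A Chernoff bound gives $N_t=t(1+o(1))$ with overwhelming probability, so using $tk/n=(1-\epsilon)\log n$ we get $\E[F_t]=n^{\epsilon+o(1)}$ (and in particular $\E[F_t]\to\infty$). For the second moment, by symmetry again, for distinct $x,y$,
\[
\P(\gamma\text{ fixes both }x\text{ and }y)=\frac{(n-k)(n-k-1)}{n(n-1)}=\left(\frac{n-k}{n}\right)^{2}\!\left(1-\frac{k}{n^{2}-nk-n+k}\right),
\]
so raising to the $m$-th power and using $m=O(n\log n/k)$ together with $k=o(n)$ yields
$\E[F_t^{2}]\le(\E F_t)^{2}(1+O((\log n)/n))+\E F_t$. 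Thus $\var(F_t)=o((\E F_t)^{2})$, and Chebyshev gives $F_t\ge \tfrac12\E F_t\gg n^{\epsilon/3}$ with probability $1-o(1)$, so $\P(Y^{\id}_t\in A_n)\to 1$.

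On the other hand, under the invariant measure $\mu$ (uniform on $\Sn$ or on $\An$, depending on whether $\Gamma\subset\An$), a standard inclusion-exclusion/moment computation shows that the number of fixed points has mean $1$ and variance bounded uniformly in $n$, and in fact converges in distribution to $\Poi(1)$. Hence $\mu(A_n)\to 0$, and we conclude
\[
d_{TV}(t)\ge \P(Y^{\id}_t\in A_n)-\mu(A_n)\longrightarrow 1.
\]

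The only technical nuisance is verifying the covariance bound for all $k=k(n)=o(n)$ simultaneously: the naive estimate comparing $(n-k)(n-k-1)/n(n-1)$ to $((n-k)/n)^{2}$ gives a multiplicative error of $1+O(k/n^{2})$ per step, which compounded over $m=O((n\log n)/k)$ steps becomes $1+O((\log n)/n)=1+o(1)$; this is precisely where the hypothesis $|\Gamma|=o(n)$ is used, and is the only place where any real care is needed.
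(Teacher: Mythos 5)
Your approach is correct, and it is a genuinely different (and arguably more standard) route than the paper's. You count untouched points $F_t$ and run a first-and-second moment argument; the paper instead runs a careful inclusion--exclusion on the events $A_i = \{i \text{ untouched}\}$, showing $\P(A_1\cup\cdots\cup A_N)\to 1-e^{-\beta}$, and then tiles $\{1,\dots,n\}$ into $m$ disjoint windows to guarantee at least $m$ untouched points with probability $\to 1$. The two methods buy slightly different things: the paper's tiling argument avoids any second-moment computation entirely and gives the sharp $1-e^{-\beta}$ constant with essentially no covariance estimate, while your Chebyshev argument is quicker to state and aligns with the textbook coupon-collector heuristic.

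Two small points worth tightening. First, the Poisson clock: your second-moment bound is only correct \emph{conditionally} on $N_t = m$. Unconditionally, since $\E[z^{N_t}]=e^{t(z-1)}$, one finds
\[
\frac{\E[F_t(F_t-1)]}{(\E F_t)^2} \;=\; \frac{n-1}{n}\,\exp\!\left(\frac{t\,k(k-1)}{n(n-1)}\right),
\]
and the exponent is of order $(k\log n)/n$, which is \emph{not} $o(1)$ when, say, $k = n/\sqrt{\log n}$ (still $o(n)$). So the unconditional variance can dominate $(\E F_t)^2$; the fix is exactly what you sketch but should make explicit: fix $m' = \lceil (1+\epsilon/2)\,t\rceil$, use Chernoff to ensure $N_t \le m'$ with probability $1-o(1)$, use monotonicity of $F$ to reduce to the discrete-time walk with $m'$ steps, and apply the (conditional) second-moment bound there, where the per-step ratio is in fact $\le 1$ so that $\var \le \E$. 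Second, the hypothesis $|\Gamma|=o(n)$ is needed primarily in the \emph{first}-moment estimate $\E[F'_{m'}] = n(1-k/n)^{m'} \to\infty$ (one needs $k/n<\epsilon$ eventually so that the exponent $1-\tfrac{1-\epsilon}{1-k/n}$ stays positive), not in the covariance bound you highlight: there the compounded error $O(mk/n^2)=O((\log n)/n)$ is $o(1)$ regardless of $k$, and moreover its sign is favourable (the per-step ratio $\tfrac{n(n-k-1)}{(n-1)(n-k)}$ is $<1$, so the indicators are negatively correlated).
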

\begin{proof}
  Let $K_m \subset \Sn$ be the set of permutations which have at least $m$ fixed points. Recall that $\mu$ is the invariant measure, which is a uniform probability measure on $\Sn$ or $\An$ depending on the parity of $\Gamma$. Let $U$ denote the uniform measure on $\Sn$. Either way,
  $$
  \mu(K_m) \le 2 U(K_m).
  $$
  Now, $U(K_m) \to \sum_{j=m}^\infty e^{-1} \frac1{j!} $ as $n \to \infty$, hence we deduce that
  \begin{equation}\label{limsupfixedpoints}
    \limsup_{m\to \infty} \limsup_{n\to \infty} \mu(K_m) = 0.
  \end{equation}

  Fix $\beta >0$ and let $$
  t_\beta =  \frac1{k} n (\log n - \log \beta).$$
  Assume that $\beta$ is such that $t_\beta$ is an integer. For each $i \geq 0$, $\gamma_i$ write $N(\gamma_i) \subset \{1,\dots,n\}$ for the set of non-fixed points of $\gamma_i$. Then we have that for each $i \geq 0$, $|N(\gamma_i)|= k$ and further $\{N(\gamma_i)\}_{i=1}^\infty$ are i.i.d. subsets of $\{1,\dots,n\}$ chosen uniformly among the subsets of size $k=|\Gamma|$.

  Consider for $1\leq i \leq n$ the event $A_i$ that the $i$-th card is not collected by time $t_\beta$, that is $i \notin \bigcup_{\ell=1}^{t_\beta} N(\gamma_\ell)$. Thus for $1\leq i_1<\dots< i_\ell\leq n$ and $\ell \leq n-k$,
  \[
    \P(A_{i_1} \cap \dots \cap A_{i_\ell})= \left(\frac{\binom{n-\ell}{k}}{\binom{n}{k}}\right)^{t_\beta}.
  \]
  Let $N=N(n)\in \N$ be increasing to infinity such that $N^2=o(n)$ and $N=o(n^2 k^{-2})$. Then by the inclusion-exclusion formula we have that
  \begin{equation}\label{eq:inclusion_exclusion}
    \P(A_1\cup \dots \cup A_N) = \sum^{N}_{\ell=1} (-1)^{\ell+1} \binom{n}{\ell}\left(\frac{\binom{n-\ell}{k}}{\binom{n}{k}}\right)^{t_\beta}.
  \end{equation}
  Writing out the fraction of binomials on the right hand side we have
  \[
    \left(1-\frac{k}{n-\ell}\right)^{\ell t_\beta} \leq \left(\frac{\binom{n-\ell}{k}}{\binom{n}{k}}\right)^{t_\beta} \leq \left(1-\frac{k}{n}\right)^{\ell t_\beta}.
  \]
  Now $-x/(1-x) \leq \log(1-x) \leq -x$ for $x \in (0,1)$ thus we have that
  \begin{equation}\label{eq:binom_frac}
    \exp\left(- \frac{\ell k t_\beta}{n-k-\ell}\right) \leq \left(\frac{\binom{n-\ell}{k}}{\binom{n}{k}}\right)^{t_\beta} \leq \exp\left(- \frac{\ell k t_\beta}{n}\right).
  \end{equation}
  On the other hand we have that
  \begin{equation}\label{eq:binomial_est}
    \frac{(n-\ell)^\ell}{\ell!} \leq \binom{n}{\ell} \leq \frac{n^\ell}{\ell!}.
  \end{equation}
  Note that $ne^{-t_\beta k/n}=\beta$, then combining \eqref{eq:binom_frac} and \eqref{eq:binomial_est} we get
  \begin{equation}\label{eq:Sv_bounds}
    \left(1-\frac{\ell}{n}\right)^\ell \exp\left(-\frac{k(k+\ell)\ell t_\beta}{n(n-k-\ell)}\right) \frac{\beta^\ell}{\ell!}\leq \binom{n}{\ell}\left(\frac{\binom{n-\ell}{k}}{\binom{n}{k}}\right)^{t_\beta} \leq \frac{\beta^\ell}{\ell!}.
  \end{equation}
  Let us lower bound the error term on the left hand side of \eqref{eq:Sv_bounds}. First $(1-\ell/n)^\ell \geq e^{-\ell^2/(n-\ell)}$, hence it follows that
  \[
    \inf_{\ell \leq N}\left(1-\frac{\ell}{n}\right)^\ell \exp\left(-\frac{k(k+\ell)\ell t_\beta}{n(n-k-\ell)}\right) \geq \inf_{\ell \leq N}\exp\left(-\frac{\ell^2}{n-\ell}-\frac{k(k+\ell)\ell t_\beta}{n(n-k-\ell)}\right).
  \]
  It is easy to see that the right hand side above converges to $1$ as $n \rightarrow \infty$. Using this and \eqref{eq:Sv_bounds} it follows that
  \[
    \lim_{n \rightarrow \infty} \sum^{N}_{\ell=1} (-1)^{\ell+1} \binom{n}{\ell}\left(\frac{\binom{n-\ell}{k}}{\binom{n}{k}}\right)^{t_\beta} = \lim_{n \rightarrow \infty} \sum_{\ell=1}^N (-1)^{\ell+1}\frac{\beta^{\ell}}{\ell!}= 1- e^{-\beta}.
  \]
  For integers $a<b$ let Let $K_{[a,b]} = A_{a+1} \cup A_{a+2} \cup \ldots A_{b}$. Then we have shown
  $$
  \liminf_{n \to \infty} \P( X_{t_\beta} \in K_{[1,N]} ) \ge 1- e^{-\beta}.
  $$
  Likewise, for any $j < \lfloor n/N\rfloor$,
  $$
  \liminf_{n \to \infty} \P( X_{t_\beta} \in K_{[jN,(j+1)N]} ) \ge 1- e^{-\beta}.
  $$
  Hence
  $$
  \liminf_{n \to \infty} \P( X_{t_\beta}  \in \cap_{j=1}^m  K_{[jN,(j+1)N]} ) \ge 1- m e^{-\beta}.
  $$
  Let $\epsilon>0$. Then for any $\beta>0$, if $t = (1-\epsilon)\tmix $ then $t< t_\beta$ for $n$ sufficiently large, and hence
  $$
  \liminf_{n \to \infty} \P( X_{t}  \in \cap_{j=1}^m  K_{[jN,(j+1)N]} ) = 1.
  $$
  But it is obvious that $ \cap_{j=1}^m  K_{[jN,(j+1)N]} \subset K_m$ and hence for $t = (1-\epsilon) \tmix$,
  \begin{equation}
    \liminf_{n \to \infty} \P( X_t  \in K_m) =1.
  \end{equation}
  Comparing with \eqref{limsupfixedpoints} the result follows.
\end{proof}

		\section{Proof of Theorem \ref{thm:general_schramm}}

		Let $\Gamma \subset \Sn$ be a conjugacy class with cycle structure $(k_2,k_3,\dots)$. Let $X=(X_t:t = 0,1,\dots)$ be a random walk on $\Sn$ which at each step applies an independent uniformly random element of $\Gamma$. Let $\rho=\sum_j (j-1)k_j$ and let $\tilde X$ be the transposition walk associated to the walk $X$ using \eqref{eq:tilde_X_defn}. In particular for $t \geq 0$, $\tilde X_{t\rho}=X_t$. Finally let $Z=(Z_1,Z_2,\dots)$ denote a Poisson--Dirichlet random variable.

		For convenience we restate Theorem \ref{thm:general_schramm} here.

		\begin{thm}\label{thm:schramm_repeat}
			Let $s \geq 0$ be such that $sk/(n\rho) \rightarrow c$ for some $c>c_\Gamma$. Then for each $m \in \N$ we have that as $n \rightarrow \infty$,
			\[
				\left( \frac{\bar \X_1(\tilde X_s)}{\theta(c)},\dots,\frac{\bar \X_m(\tilde X_s)}{\theta(c)}\right) \rightarrow (Z_1,\dots,Z_m)
			\]
			in distribution where $\theta(c)$ is given by \eqref{eq:theta2}.
		\end{thm}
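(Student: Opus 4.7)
The plan is to adapt Schramm's original proof \cite{schramm_transpo} to this more general setting, replacing the Erd\H{o}s--Renyi graph with the random hypergraph $H_{s/\rho}$ and using the machinery already developed in the paper. The overall strategy is to show that the empirical distribution of cycles of $\tilde X_s$, restricted to the giant component of $H_{s/\rho}$ and normalized by $\theta(c) n$, converges to a Poisson--Dirichlet$(0,1)$ distribution, and that essentially all mass outside the giant component sits in cycles of size $O(\log n)$ which vanish in the renormalization.

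First I would reduce the problem to the giant component. Every cycle of $\tilde X_s$ is contained in a single connected component of $H_{s/\rho}$, because a transposition only merges or splits within a connected block of hyperedges. By Theorem \ref{lemma:hyper_k_cycle}, with probability tending to one the largest component of $H_{s/\rho}$ has size $\theta(c) n (1+o(1))$ and all other components are of size $O(\log n)$. Thus for any fixed $m$, the top $m$ normalized cycle lengths of $\tilde X_s$ come from the giant component with high probability, and it suffices to establish the Poisson--Dirichlet limit for the partition of the giant component by the cycles of $\tilde X_s$ restricted to it (rescaled by the size of the giant component).

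Second, I would use the size-biased characterization of Poisson--Dirichlet, following Schramm verbatim at this stage. Pick a uniformly random vertex $v$ and let $\mathcal{C}_v$ denote the cycle of $\tilde X_s$ containing $v$. Conditional on $v$ lying in the giant component (an event of asymptotic probability $\theta(c)$), one needs to show that $|\mathcal{C}_v|/(\theta(c) n)$ converges in distribution to a uniform random variable on $[0,1]$; iterating on $\{1,\dots,n\}\setminus \mathcal C_v$ then yields the full size-biased sequence, which uniquely determines the Poisson--Dirichlet limit. Here one uses the conditional uniformity of the second marker from Proposition \ref{prop:conditional_unif} together with the fact that $|\Gamma| = o(n)$, which together guarantee that the microscopic dynamics of the transposition walk $\tilde X$ look, on the scale of the giant component, indistinguishable from a plain random transposition walk on $\theta(c) n$ points. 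The argument is then the same exploration/matching computation used by Schramm: the cycle of $v$ is reconstructed by following backward the chain of transpositions whose second marker is $v$ or a predecessor; an analogue of the hypergraph exploration of Lemma \ref{lemma:A_mgf} yields that the mass of this cycle divided by the giant has the stated uniform limit.

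The main obstacle is the non-independence of the hyperedges within a single packet, exactly the difficulty highlighted in Remark \ref{R:giant}. To handle this I would, as in the proof of Theorem \ref{lemma:hyper_k_cycle}, separate bad packets (those where a non-negligible fraction of the mass has already been explored, whose total contribution is $o(n)$ by the bound \eqref{bad}) from good packets (where the uniformity of the second marker is essentially unspoiled), and apply a sprinkling argument to conclude that any excess dependence is absorbed into a sub-leading error term. Provided one is careful to truncate the exploration at the scale $n^{2/3}$ as in Lemma \ref{L:theta} and to use the convergence \eqref{eq:k'} of normalized cycle counts $jk_j/|\Gamma|\to k'_j$ in the limiting generating function $\Psi$, the remainder of the proof proceeds as in Schramm's paper and the conclusion of Theorem \ref{thm:schramm_repeat} follows.
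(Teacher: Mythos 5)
Your proposal correctly identifies two useful preliminary observations (cycles of $\tilde X_s$ are confined to components of $H_{s/\rho}$, and only the giant matters in the rescaling), but the core of your argument has a genuine gap: you never actually prove the Poisson--Dirichlet limit, and the mechanism you invoke to do so does not deliver it.

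The crux of your plan is the claim that, for a uniformly random vertex $v$ in the giant component, $|\mathcal C_v|/(\theta(c)n)$ converges to a Uniform$[0,1]$ variable, obtainable by an ``exploration/matching computation'' analogous to Lemma~\ref{lemma:A_mgf}. But Lemma~\ref{lemma:A_mgf} and the breadth-first exploration surrounding it reveal the \emph{component} of $v$ in the hypergraph $H_{s/\rho}$, not the \emph{cycle} of $v$ in the permutation $\tilde X_s$. These are very different objects: the giant component has nearly deterministic size $\approx\theta(c)n$, whereas the cycle of $v$ is one block of a random partition of that component, and the whole content of the theorem is that this partition is PD. Nothing in the hypergraph exploration machinery tells you how the permutation fragments the giant component into cycles, so the proposed step is circular. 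You also attribute this size-biased argument to Schramm, but Schramm's proof of his Theorem~1.1 is a \emph{coupling} argument, not a size-biased exploration: he runs the coagulation--fragmentation dynamics for a short burn-in window and couples the cycle partition with an independent process started from a Poisson--Dirichlet variable, exploiting that PD is the invariant law for these dynamics. The paper's proof of Theorem~\ref{thm:schramm_repeat} follows exactly this route: it colours entries according to whether they lie in the giant component, introduces auxiliary processes $\bar X'$ and $\bar X''$ to control the error from red (small-component) entries and discretisation, then couples $\bar X''$ with a PD-initialised process $\bar Z$ using a refreshed version of the coupling from Section~\ref{subsec:I_2}, and quantifies the merging probability via the analogues of Schramm's Lemma~2.4 and Corollary~3.4 (Lemmas~\ref{lemma:schramm_key} and~\ref{lemma:schramm_34}). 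Without this invariance/coupling idea, your proposal has no mechanism by which the uniform marginal of the rescaled cycle size can actually be established, and the rest of the iteration (peeling off the largest cycles) inherits the same gap.

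A secondary issue: your iteration ``on $\{1,\dots,n\}\setminus\mathcal C_v$'' would need to condition on staying in what remains of the giant component; sampling uniformly from the complement of $\mathcal C_v$ would contaminate the size-biased sequence with vertices from small components. This is repairable, but it does point to the fact that the proposal has not been thought through at the level of the actual limit object.
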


		The proof of this result is very similar to the proof of Theorem 1.1 in \cite{schramm_transpo}. We give the details here.

		Recall the hypergraph process $H=(H_t:t = 0,1,\dots)$ associated with the walk $X$ defined in Section \ref{SS:hyper}. Analogously let $\tilde G=(\tilde G_t:t = 0,1,\dots)$ be a process of graphs on $\{1,\dots,n\}$ such that the edge $\{x,y\}$ is present in $\tilde G_t$ if and only if the transposition $(x,y)$ has been applied to $\tilde X$ prior to and including time $t$. Hence we have that for each $t=0,1,\dots$, $\tilde G_{t\rho}=H_t$.

		Recall that $\tilde X$ satisfies conditional uniformity as described in Proposition \ref{prop:conditional_unif}.
		Using the graph process $\tilde G$ above and the conditional uniformity of $\tilde X$ the following lemma, which is the analogue of Lemma 2.4 in \cite{schramm_transpo}, follows almost verbatim from Schramm's arguments.

		\begin{lemma}\label{lemma:schramm_key}
			Let $s \geq 0$ be such that $sk/(n\rho) \rightarrow c$ for some $c>c_\Gamma$ and let $\epsilon\in (0,1/8)$. Let $M=M(\epsilon,n,s)$ be the minimum number of cycles of $\tilde X_s$ which are needed to cover at least $(1-\epsilon)$ proportion of the vertices in the giant component of $\tilde G_s$. Then for $\alpha \in (0,1/8)$ we have that
			\[
				\limsup_{n \rightarrow \infty} \P(M>\alpha^{-1}|\log(\alpha\epsilon)|^2) \leq C \alpha
			\]
			for some constant $C$ which does not depend on $\alpha$ nor $\epsilon$.
		\end{lemma}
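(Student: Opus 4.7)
The plan is to adapt Schramm's original argument from \cite{schramm_transpo} to the setting of general conjugacy classes, using Proposition \ref{prop:conditional_unif} as a substitute for the full independence available in the pure random-transposition case. First, I would restrict attention to the giant component of $\tilde G_s$: by Lemma \ref{lemma:hyper_k_cycle}, with high probability this has size $(\theta(c)+o(1))n$, and any cycle of $\tilde X_s$ is entirely contained in a single connected component of $\tilde G_s$, so coverage reduces to covering the giant component.

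Second, I would split the transposition timescale $[0,s]$ into a prefix $[0,s_0]$ with $s_0 := \lfloor(1-\alpha)s\rfloor$ (rounded so that $s_0$ is a refreshment time) and a "sprinkling" suffix $(s_0,s]$ of length $\approx\alpha s$. The prefix already produces (with probability tending to one) a graph $\tilde G_{s_0}$ carrying a macroscopic giant component, which by monotonicity is contained in that of $\tilde G_s$. The suffix then serves to coalesce cycles aggressively.

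Third, and following the core of Schramm's argument, I would build up cycles of $\tilde X_s$ iteratively: pick $v_1$ uniformly in the giant component of $\tilde G_s$, let $\cO_1$ be the orbit of $v_1$ under $\tilde X_s$; then pick $v_2$ uniformly in the complement, and so on. The goal is to show that the expected fraction of the giant component remaining uncovered after $m$ such orbits decays geometrically at rate determined by $\alpha$, so that $m \asymp \alpha^{-1}|\log(\alpha\epsilon)|^2$ rounds suffice to leave less than $\epsilon$ mass uncovered, except on an event of probability $\le C\alpha$. The mechanism is that each sprinkled transposition coalesces a given orbit with another component with probability of order $\alpha$ times the uncovered fraction, thanks to Proposition \ref{prop:conditional_unif}: the second marker is uniform on the (still large) set of available vertices, so it falls in the as-yet-uncovered part of the giant with the correct probability. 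Iterating this over the $\alpha s$ sprinkled transpositions and over $m$ rounds gives the geometric decay via a standard exploration/supermartingale estimate.

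The main obstacle is transferring Schramm's argument, which benefits from the independence of successive transpositions, to the present setting where consecutive transpositions within the decomposition \eqref{eq:gamma_s} of a single $\gamma_s$ share a common marker (the "chaining" in Proposition \ref{prop:conditional_unif}(ii)). The resolution is that only the \emph{first} marker is constrained while the \emph{second} marker remains conditionally uniform on the unused vertices; since a single step of $X$ contributes at most $|\Gamma| = o(n)$ chained transpositions and $|\Gamma|/n \to 0$, the distributional cost of coupling the suffix transpositions to an i.i.d.\ uniform transposition model is $o(1)$ per step of $X$ and summable over the sprinkling window, so it can be absorbed into the prefactor $C$. With this reduction in hand, Schramm's proof applies essentially verbatim, yielding the claimed bound.
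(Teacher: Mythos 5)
Your proposal matches the paper's approach: the paper proves this lemma simply by asserting that Schramm's proof of Lemma 2.4 in \cite{schramm_transpo} carries over ``almost verbatim'' once one has the graph process $\tilde G$ and, crucially, the conditional uniformity of the second marker from Proposition \ref{prop:conditional_unif}, which together with $|\Gamma| = o(n)$ neutralizes the chaining dependence -- precisely the obstacle and resolution you identify. Your reconstruction of Schramm's internal mechanics (sprinkling window plus orbit-by-orbit exploration with geometric decay) is a plausible account of what is being invoked; the only caveat is that the split point $s_0 = \lfloor(1-\alpha)s\rfloor$ must be chosen so the prefix remains supercritical (i.e.\ $(1-\alpha)c > c_\Gamma$), which is not automatic for $c$ near $c_\Gamma$, but this is a detail of Schramm's argument rather than of the adaptation step, and the paper itself does not spell it out.
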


		Henceforth fix some time $s \geq 0$ such that $sk/(n\rho) \rightarrow c$ for some $c>c_\Gamma$. Fix $\epsilon \in (0,1/8)$ and define
		\begin{align*}
			\Delta &:=\lfloor\epsilon^{-1}\rfloor\\
			s_0 &:= s-\Delta.
		\end{align*}
		For $t =0,\dots, \Delta$ define $\bar X_t=\X(\tilde X_{s_0+t})$. We can assume that for $t \leq \Delta$, $ \tilde X_{s_0+t}$ satisfies the relaxed conditional uniformity assumption described in Definition \ref{defn:relaxed}. Indeed by making this assumption we are disregarding the constraint on the transpositions described in Proposition \ref{prop:conditional_unif} applied to $\tilde X_t$ for $t=s_0,\dots,s$. However the probability that we violate this constraint is at most $2\Delta k/n$.

		Colour an element of $\bar X_0=\X(\tilde X_{s_0})$ green if the cycle whose renormalised cycle length of this element lies in the giant component of $\tilde G_{s_0}$. We colour all the other elements of $\bar X_0$ red. Thus asymptotically in $n$, the sum of the green elements is $\theta(c)$ and the sum of the red elements is $1-\theta(c)$. In the evolution of $(\bar X_t:t= 0,1,\dots)$ we keep the colour scheme as follows. If an element fragments, both fragments retain the same colour. If we coagulate two elements of the same colour then the new element retains the colour of the previous two elements. If we coagulate a green element and a red element, then the colour of the resulting element is green.

		Define $\bar X'=(\bar X'_t :t =0,\dots, \Delta)$ and $\bar X''=(\bar X''_t:t =0,\dots, \Delta)$ as follows. Initially $\bar X'_0=\bar X''_0=\bar X_0$. Apply the same colouring scheme to $\bar X'$ and $\bar X''$ as we did to $\bar X$. Each step evolution is described as follows. Then the walks evolve as follows.
		\begin{itemize}
			\item $\bar X'_t$: Evolves the same as $\bar X$ except we ignore any transition which involves a red entry.
			\item $\bar X''_t$: Evolves the same as $\bar X'$ except that the markers $u,v$ used in the transitions of $\bar X''$ are distributed uniformly on $[0,1]$.
		\end{itemize}

		Lemma \ref{lemma:hyper_k_cycle} states that the second largest component of $\tilde G_{s_0}$ has size $o(n)$. Hence, initially each red element has size $o(1)$ as $n\rightarrow\infty$. Now $\Delta$ does not increase with $n$, hence for any $s =0,1,\dots,\Delta$, we are unlikely to make a coagulation (or fragmentation) in $\bar X'_s$ without coagulating (or fragmenting) entries of $\bar X_s$ of similar size. Similar considerations for the processes $\bar X'$ and $\bar X''$ leads to the following lemma.

		\begin{lemma}\label{lemma:couple_prime}
			There exists a coupling between the walks $\bar X$ and $\bar X'$, and between $\bar X'$ and $\bar X''$ such that for each $\eta>0$,
			\[
				\lim_{n \rightarrow \infty}\P\left(\sup_{i \in \N}| \bar X_i(\Delta)- \bar X'_i(\Delta)|>\eta \right)=\lim_{n \rightarrow \infty}\P\left(\sup_{i \in N}| \bar X'_i(\Delta)- \bar X''_i(\Delta)|>\eta \right)=0.
			\]
		\end{lemma}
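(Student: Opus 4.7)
The plan is to handle the two couplings with separate arguments, each exploiting a different fact already established in the paper.

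For the coupling of $\bar X$ and $\bar X'$, I drive both walks with the same markers $(u_s,v_s)$ and perform the corresponding coagulation or fragmentation in $\bar X'$ if and only if neither of the two tiles selected is red, which matches the definition in the paper. By Lemma \ref{lemma:hyper_k_cycle}, every red entry of $\bar X_0$ has size at most $\beta\log n/n = o(1)$, and since $\Delta=\lfloor\epsilon^{-1}\rfloor$ is a constant independent of $n$ and fragmentations cannot increase tile sizes, any red entry present at time $t\le\Delta$ is the image of at most $\Delta+1$ original reds under at most $\Delta$ coagulations, hence has size at most $(\Delta+1)\beta\log n/n = o(1)$. I match the green entries of $\bar X(\Delta)$ with those of $\bar X'(\Delta)$ by tracing each green lineage back to its ancestor in $\bar X_0$; the mass discrepancy along any matched pair is the total mass absorbed via green-red coagulations in $\bar X$, which across the $\Delta$ steps is $o(1)$. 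All unmatched entries on either side are red and so are $o(1)$. Since the sorted $\ell^\infty$ distance between two finite sequences is bounded by the best bijectively matched $\ell^\infty$ distance, this yields $\sup_i |\bar X_i(\Delta)-\bar X'_i(\Delta)| = o(1)$ in probability.

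For the coupling of $\bar X'$ and $\bar X''$, I use the fact that both walks take values in $\Omega_n$, so every tile has length an integer multiple of $1/n$. Consequently, for any tile $T$, the probability that a marker falls in $T$ is exactly $|T|$ both under the discrete law (uniform on $\{1/n,\dots,n/n\}$, as in $\bar X'$) and under the continuous law (uniform on $[0,1]$, as in $\bar X''$), and the same identity holds for $v$ up to an $O(1/n)$ correction coming from the exclusion of $1/n$ in the discrete case. I couple the markers by setting $u''_s\sim\text{Unif}[0,1]$ and $u'_s=\lceil u''_s n\rceil/n$, and analogously for $v$: with probability $1-O(1/n)$ per step the marker pair lies in the same two tiles in both walks, in which case the two walks execute identical coagulations or fragmentations, and the only possible discrepancy is a shift of at most $1/n$ in the split point when a tile is fragmented. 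Summed over the $\Delta$ steps, the probability of any mismatch is $O(\Delta/n)=o(1)$, and conditionally on the good event every entry differs by at most $\Delta/n=o(1)$, yielding $\sup_i |\bar X'_i(\Delta)-\bar X''_i(\Delta)| = o(1)$ with probability $1-o(1)$.

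The main obstacle is the bookkeeping in the first coupling: because each green-red coagulation in $\bar X$ decreases its tile count without affecting $\bar X'$'s, the two sorted sequences have different lengths, so the matching of entries is not a literal bijection. The resolution is to split each partition into a macroscopic family (the green lineages, which are in canonical bijection with matched discrepancy $o(1)$) and a microscopic residue (all red entries, together with any extra pieces produced by red fragmentations in $\bar X$), both of which are uniformly $o(1)$ throughout $[0,\Delta]$. A standard rearrangement inequality then transfers the matched $\ell^\infty$ bound into the sorted $\ell^\infty$ bound appearing in the statement.
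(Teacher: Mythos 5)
The paper itself does not give a formal proof of this lemma: it states the two reductions, observes that the red entries are $o(1)$ because they come from non-giant components of $\tilde G_{s_0}$, and concludes with a one-sentence heuristic (``similar considerations for the processes $\bar X'$ and $\bar X''$ leads to the following lemma''). Your proposal fleshes this out along the lines the paper intends, and the first half — tracing green lineages, bounding the absorbed red mass by $\Delta$ times $O(\beta\log n/n)$, and bounding the unmatched residue by the same microscopic scale — is correct in spirit and roughly what one would need to write.

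There is, however, a genuine error in the second coupling. You write ``both walks take values in $\Omega_n$, so every tile has length an integer multiple of $1/n$,'' and build your coupling on the consequence that a continuous marker and its discrete rounding $\lceil u''n\rceil/n$ automatically fall in the same tile. This is only true at time $0$. The walk $\bar X''$ uses markers uniform on $[0,1]$, so the split points it produces are continuous; after one fragmentation the tile boundaries of $\bar X''$ are no longer multiples of $1/n$, and $\bar X''_t \in \Omega_\infty \setminus \Omega_n$ in general. The ``exact probability $|T|$'' identity therefore fails for $t\ge 1$, and the claimed per-step $O(1/n)$ mismatch probability is unjustified as stated. The fix is to align the two tilings lineage-by-lineage (they start identical and diverge only when a fragmentation shifts a boundary by at most $1/n$), so that after $s$ steps corresponding boundaries differ by at most $s/n$; a mismatch then requires the continuous marker to fall within $O(\Delta/n)$ of a boundary. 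Since the number of tiles can be of order $n$, this is not automatically small, and one must invoke the fact that only the $O(1)$ macroscopic tiles matter for the $\sup_i$ criterion (the microscopic tiles are already below any fixed $\eta$, so mismatches confined to them are harmless). The same caveat also applies, more mildly, to your first coupling: ``driving both walks with the same markers'' only keeps them synchronised so long as the markers avoid the $o(1)$-width discrepancy zones created by absorbed red mass, and this needs the same macroscopic/microscopic split to close. Both repairs are routine, but as written the $\Omega_n$ claim is false and the per-step mismatch estimate does not follow from what you have stated.
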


		Using the preceding lemma, it suffices now to find an appropriate coupling between $\bar X''$ and $Z$. To do this we modify Schramm's coupling in \cite{schramm_transpo}. First we let $\{J_1,\dots,J_L\}$ be the set of times $s \in \{0,\dots,\Delta\}$ such that that $\bar X''_{s-1} \neq \bar X''_{s}$. It is easy to see that $\lim_{n \rightarrow \infty}\P(L>\sqrt \Delta) =1$ and henceforth we will condition on the event that $\{L>\sqrt \Delta\}$ and set $\Delta'=\lfloor \sqrt\Delta\rfloor$. Define a process $\bar Y=(\bar Y_t:t=0,\dots,\Delta')$ as follows. Initially $\bar Y_0=\bar X''_0$. For $t =1,\dots,\Delta'$ we let $\bar Y_t$ be $X''_{J_t}$ renormalised so that $\sum_i \bar Y_i(t) =1$ where $\bar Y_i(t)$ is the $i$-th element of $\bar Y_t$.

		We define a process $\bar Z=(Z_t:t=0,1,\dots,\sqrt \Delta)$ as follows. Initially $Z_0$ has the distribution of a Poisson--Dirichlet random variable, independent of $\bar Y$. Then for $t=1,\dots,\Delta'$ define $\bar Z_t$ by applying the coupling in Section \ref{subsec:I_2} to $\bar Y$ and $\bar Z$ but with the following modifications:
		\begin{itemize}
			\item the markers $u,v \in [0,1]$ are taken uniformly at random,
			\item we always take $v'=v$,
			\item we modify the definition of a refreshment time: $s$ is a refreshment time if either $J_{s-1} + 2 \leq J_s$ or $J_s+s_0$ is a refreshment time in the sense of Definition \ref{def:refreshment},
			\item when a marked tile of size $a$ fragments, it creates a tile of length $v$ and a tile of length $a-v$. We mark the tile of length $a-v$.
		\end{itemize}
		It is not hard to check that the Poisson--Dirichlet distribution is invariant under this evolution and hence we have that for each $t=0,1,\dots,\Delta'$, $Z_t$ has the law of a Poisson--Dirichlet.

		Our coupling agrees with the coupling in \cite[Section 3]{schramm_transpo} when $\Gamma=T$ is the set of all transpositions. In this case each time $s$ is a refreshment time and hence the marked tile at time $s$ is always chosen by the marker $u$. One can adapt the arguments in Chapter 3 of Schramm's paper to our case by using the following idea. Note first that all the estimates of Schramm apply at $s$ when $s$ is a refreshment time. When $s$ is not a refreshment time and Schramm considers the event that the marker $u$ at time $s$ falls inside an unmatched tile, instead we consider the event that the marker $v$ at time $s-1$ falls inside an unmatched tile. By the properties of the coupling, this guarantees that at time $s$ the marked tile is unmatched.

		Adapting Schramm's arguments leads to the following lemma, which is the analogue of \cite[Corollary 3.4]{schramm_transpo}.

		\begin{lemma}
			Define
			\[
				N^0:= \#\{i \in \N: \bar Y_i(0)>\epsilon\} +  \#\{i \in \N: \bar Z_i(0)>\epsilon\}
			\]
			and let
			\[
				\bar \epsilon:= \epsilon + \sum_{i=1}^{\infty} \bar Y_i(0) \1_{\{ \bar Y_i(0)<\epsilon\}}  + \sum_{i=1}^\infty \bar Z_i(0)\1_{\{\bar Z_i(0)<\epsilon\}}.
			\]
			Define the event
			\[
				\mathcal B =  \left\{ \bar\epsilon^{4/5} \leq \frac{1}{\Delta'}\leq \frac{\bar\epsilon^{1/5}}{N^0 \vee 1}  \right\}.
			\]
			Let $q \in \{1,\dots,\Delta'\}$ be distributed uniformly, independent of the processes $\bar Y$ and $\bar Z$. Then we have that for each $\rho>0$,
			\[
				\P(\sup_{i \in \N} |\bar Y_i(q)-\bar Z_i(q)| > \rho ) \leq C \frac{\P(\mathcal B)}{\rho \log \Delta'}
			\]
			for some constant $C>0$.
			\label{lemma:schramm_34}
		\end{lemma}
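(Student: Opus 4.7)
\medskip\noindent\textbf{Proof proposal for Lemma \ref{lemma:schramm_34}.}
The plan is to adapt verbatim as much as possible of the argument in Section~3 of \cite{schramm_transpo}, whose Corollary~3.4 is the exact analogue of our statement in the special case $\Gamma=T$. The target inequality is of the form ``in expectation over a uniformly random time $q \in \{1,\dots,\Delta'\}$, the sup-distance between $\bar Y(q)$ and $\bar Z(q)$ is $O(1/\log \Delta')$'', which by Markov's inequality gives the $1/(\rho \log \Delta')$ bound. The event $\cB$ isolates precisely the regime where the Schramm dynamics has enough time to match the two configurations (lower bound on $\Delta'$ in terms of $\bar\epsilon$) but has not yet had time to shatter the already-small entries (upper bound in terms of $N^0$).

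First I would introduce the natural potential $W_t := \sum_{i} |\bar Y_i(t) - \bar Z_i(t)|$, or equivalently the total mass of unmatched entries in the pairing between $\bar Y(t)$ and $\bar Z(t)$ induced by the coupling. By Lemma~\ref{L:coupling} (extended to this setting without difficulty, since the evolution is only more favourable: the two walks share their markers and the measure-preserving map $\Phi$ is replaced by the identity), $W_t$ is nonincreasing in $t$. The next step is the quantitative decrease lemma: at each step $t$, provided that $\bar Y(t)$ and $\bar Z(t)$ each still have an entry of size at least $\bar\epsilon$, there is probability at least $c\bar\epsilon^2$ of a reduction in $W$ by a definite amount (coagulation of two unmatched entries of macroscopic size). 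The proof of this is exactly as in \cite[Lemmas~3.1--3.3]{schramm_transpo}: one conditions on the marked tiles being unmatched, and then asks that the marker $v$ falls in a second unmatched tile.

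The averaging over a uniform random time $q$ is what produces the $1/\log\Delta'$ factor. Following \cite[Section~3]{schramm_transpo}, I would iterate the one-step decrease, showing that the number of ``good'' steps up to time $\Delta'$ grows like $\log\Delta'$ (because each good step roughly halves $W$, and we start with $W_0 \le 1$), and then use that $q$ is uniform to conclude that the expected residual $W_q$ is $O(1/\log\Delta')$. The condition $\bar\epsilon^{4/5} \le 1/\Delta'$ in $\cB$ guarantees that there is enough time for this logarithm of halvings to occur before the effective small-mass catches up; the condition $1/\Delta' \le \bar\epsilon^{1/5}/(N^0\vee 1)$ ensures that throughout $[0,\Delta']$ the configuration retains entries of size at least $\bar\epsilon$, so the one-step decrease estimate remains valid.

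The main obstacle is that in \cite{schramm_transpo} every step is a refreshment time and the marked tile at step $s$ is freshly chosen by the uniform marker $u_s$. In our setting non-refreshment times are present: at such a time the marked tile at step $s$ is forced to be the tile containing the second marker $v_{s-1}$ of the previous step. Schramm's argument requires, at each step, some event of the form ``the marked tile is unmatched'', and at refreshment times one proves this directly by asking that $u_s$ land in an unmatched tile. At non-refreshment times I will instead ask the analogous event at time $s-1$: that $v_{s-1}$ lands in an unmatched tile. By the coupling rule this places the marked tile of step $s$ in an unmatched region, so the subsequent analysis goes through unchanged. Since the redefinition of refreshment times in the coupling (as spelled out in the bullet list before the lemma) makes consecutive non-refreshment blocks of bounded length, the loss incurred is an absolute constant, which is absorbed into $C$. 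All other estimates in Schramm's proof only use the conditional uniformity of the marker, which holds here by the relaxed conditional uniformity assumption of Definition~\ref{defn:relaxed}.
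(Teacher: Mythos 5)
Your proposal follows the same route as the paper, which itself gives only a sketch: both you and the paper point to Schramm's Section~3 argument and explain how to patch it at non-refreshment times by working with the conditionally uniform second marker $v_{s-1}$ from the previous step rather than with a (non-existent) fresh first marker at time $s$. That is the key idea and you state it correctly.

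However, two of the details you added are off. First, your reading of the two constraints defining $\cB$ is reversed. The condition $\bar\epsilon^{4/5}\le 1/\Delta'$ is the \emph{upper} bound $\Delta'\le\bar\epsilon^{-4/5}$ (equivalently, a smallness requirement on the accumulated small-mass noise $\bar\epsilon$ relative to the time horizon), not a lower-bound ``enough time'' condition; conversely $1/\Delta'\le\bar\epsilon^{1/5}/(N^0\vee 1)$ is the \emph{lower} bound $\Delta'\ge(N^0\vee 1)\bar\epsilon^{-1/5}$, which is precisely the ``enough time to coalesce the $N^0$ macroscopic pieces'' condition, not a fragmentation-control condition. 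Second, your justification that ``consecutive non-refreshment blocks are of bounded length'' is false for a general conjugacy class: a single long cycle in $\Gamma$ produces a long run of non-refreshment times, and the redefined refreshment times in the bullet list do not cap that run. Fortunately this claim is unnecessary, since the argument only uses what you observed a sentence earlier — that the second marker at each step is conditionally uniform under the relaxed conditional uniformity assumption, which determines the marked tile at the next step and lets Schramm's one-step estimates go through at every step regardless of block length.
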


		Using Lemma \ref{lemma:schramm_34} it suffices to show that $\P(\mathcal B)/\log \Delta' \rightarrow 0$ as $\epsilon \downarrow 0$. The following lemma shows a stronger result.

		\begin{lemma}
			\label{lemma:schramm_condition_check}
			Suppose that $\mathcal B$ is defined as in Lemma \ref{lemma:schramm_34}, then
			\[
				\lim_{\epsilon \downarrow 0} \P(\mathcal B) = 1.
			\]
		\end{lemma}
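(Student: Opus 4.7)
The plan is to show $\P(\mathcal B) \to 1$ by estimating the four summands defining $\bar\epsilon$ and $N^0$ separately. Since $\Delta'\sim \epsilon^{-1/2}$, the event $\mathcal B$ amounts to $\bar\epsilon^{4/5}\lesssim \epsilon^{1/2}$ together with $N^0\lesssim \epsilon^{-1/2}\bar\epsilon^{1/5}$. Throughout I fix the parameter in Lemma~\ref{lemma:schramm_key} to be $\alpha=\epsilon^{1/4}$.

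First I would handle the Poisson--Dirichlet side. The sorted atoms of $\bar Z(0)\sim \mathrm{PD}(1)$ form a Poisson point process on $(0,1]$ with intensity $x^{-1}\,dx$, so
\[
\E\Bigl[\sum_i \bar Z_i(0)\1_{\{\bar Z_i(0)<\epsilon\}}\Bigr]=\int_0^\epsilon dx=\epsilon, \qquad \E\bigl[\#\{i:\bar Z_i(0)>\epsilon\}\bigr]=\int_\epsilon^1 x^{-1}\,dx=\log(1/\epsilon).
\]
By Markov's inequality, with probability at least $1-\epsilon^{1/4}-1/|\log \epsilon|$ we have simultaneously $\sum_i \bar Z_i(0)\1_{\{\bar Z_i(0)<\epsilon\}}\le \epsilon^{3/4}$ and $\#\{i:\bar Z_i(0)>\epsilon\}\le |\log\epsilon|^2$.

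Next I would control the giant-component side via Lemma~\ref{lemma:schramm_key}. With $\alpha=\epsilon^{1/4}$ that lemma provides, with probability at least $1-C\epsilon^{1/4}$, a family of $M\le \epsilon^{-1/4}|\log\epsilon|^2$ cycles of $\tilde X_{s_0}$ that cover a $(1-\epsilon)$-fraction of the giant of $\tilde G_{s_0}$. In the $\bar Y(0)$-normalisation, the uncovered mass is at most $\epsilon$, so no single uncovered cycle has size exceeding $\epsilon$; hence every cycle with $\bar Y_i(0)>\epsilon$ lies in the cover, giving $\#\{i:\bar Y_i(0)>\epsilon\}\le M$. Splitting the small cycles into those inside the cover (at most $M$ cycles, each of mass $<\epsilon$) and those outside (total mass $\le \epsilon$) yields $\sum_i \bar Y_i(0)\1_{\{\bar Y_i(0)<\epsilon\}}\le (M+1)\epsilon\le 2\epsilon^{3/4}|\log\epsilon|^2$.

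On the intersection of these high-probability events we obtain $\bar\epsilon\le 4\epsilon^{3/4}|\log\epsilon|^2$ and $N^0\le 2\epsilon^{-1/4}|\log\epsilon|^2$. Then $\bar\epsilon^{4/5}=O(\epsilon^{3/5}|\log\epsilon|^{8/5})=o(\epsilon^{1/2})=o(1/\Delta')$ because $3/5>1/2$, which is the first inequality of $\mathcal B$. For the second, using the trivial lower bound $\bar\epsilon\ge \epsilon$ gives $\Delta'\bar\epsilon^{1/5}\ge \epsilon^{-3/10}$, which dominates $2\epsilon^{-1/4}|\log\epsilon|^2\ge N^0$ since $-1/4>-3/10$ and the logarithmic factor is absorbed for small $\epsilon$. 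Both inequalities of $\mathcal B$ therefore hold, and $\P(\mathcal B)\to 1$. The only genuine input is Lemma~\ref{lemma:schramm_key}; the main obstacle is the exponent book-keeping needed to reconcile the competing demands on $\bar\epsilon$ (small enough to satisfy the upper bound, but with $\bar\epsilon^{1/5}$ large enough to beat $N^0/\Delta'$); the choice $\alpha=\epsilon^{1/4}$ leaves slack in both inequalities.
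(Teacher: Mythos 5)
Your proposal is correct and follows essentially the same route as the paper: split $\mathcal B$ into the upper-bound constraint on $\bar\epsilon$ and the lower-bound constraint on $\bar\epsilon^{1/5}/N^0$, control the $\bar Z$-contributions via first moments of the Poisson--Dirichlet law, control the $\bar Y$-contributions via the covering number $M$ of Lemma~\ref{lemma:schramm_key}, and then check that the exponents leave room on both sides (your choice $\alpha=\epsilon^{1/4}$ plays the role of the paper's implicit $\alpha\approx\epsilon^{1/6}$; both work). One small imprecision: the sorted atoms of a $\mathrm{PD}(1)$ random variable do \emph{not} form a Poisson point process on $(0,1]$ (they are constrained to sum to $1$), but the frequency spectrum $\E\bigl[\#\{i:Z_i\in dx\}\bigr]=x^{-1}\,dx$ that you are really using is correct, so the two first-moment identities $\E\bigl[\sum_i Z_i\1_{\{Z_i<\epsilon\}}\bigr]=\epsilon$ and $\E\bigl[\#\{i:Z_i>\epsilon\}\bigr]=\log(1/\epsilon)$ hold; the paper reaches the first of these more directly by noting that a size-biased pick from $\mathrm{PD}(1)$ is uniform on $[0,1]$.
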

		\begin{proof}
			Let
			\begin{align*}
				\mathcal B_1 &:=  \left\{ \bar\epsilon^{4/5} \leq \frac{1}{2}\epsilon^{1/2} \right\}\\
				\mathcal B_2 &:=  \left\{  2\epsilon^{1/2}\leq \frac{\bar\epsilon^{1/5}}{N^0 \vee 1}  \right\}
			\end{align*}
			Now as $(1/2)\epsilon^{-1/2} \leq \Delta' \leq 2\epsilon^{-1/2}$ we have that $\mathcal B\supset\mathcal B_1 \cap \mathcal B_2$. First let us bound $\P(\mathcal B_1^c)$. Note that on the event $\mathcal B_1^c$ we have that $\bar \epsilon > 2^{-5/4}\epsilon^{5/8}$.
			Note that a size biased sample from a Poisson--Dirichlet random variable has a uniform law on $[0,1]$. Hence it follows that
			\[
				\E\left[\sum_{i=1}^\infty \bar Z_i(0) \1_{\{\bar Z_i(0)<\epsilon\}}\right] = \epsilon
			\]
			and thus
			\[
				\P\left(  \sum_{i=1}^\infty \bar Z_{i}(0) \1_{\{\bar Z_i(0)<\epsilon\}} > \epsilon^{5/6}\right) \leq \frac{\E\left[\sum_{i=1}^\infty \bar Z_i(0) \1_{\{\bar Z_i(0)<\epsilon\}}\right] }{\epsilon^{5/6}} \leq \epsilon^{1/6}.
			\]
			Next consider the random variable $M$ in Lemma \ref{lemma:schramm_key} at time $s_0=s-\Delta$ where we recall that $\bar Y(0)=\X(\tilde X_{s_0})$. We have that
			\[
				\sum_{i=1}^\infty Y_i(0) \1_{\{\bar Y_i(0): \bar Y_i(0)<\epsilon\}} \leq \epsilon (M+1)
			\]
			Then applying Lemma \ref{lemma:schramm_key} at time $s_0$ we have that
			\[
				\P\left( \sum_{i=1}^\infty \bar Y_i(0) \1_{\{\bar Y_i(0)<\epsilon\}} > \epsilon^{5/6} \right) \leq \P(M> \epsilon^{-1/5}) \leq C \epsilon^{1/6}
			\]
			for some constant $C>0$. Hence it follows that for $\epsilon>0$ small
			\[
				\P(\mathcal B^c_1)=\P(\bar \epsilon > 2^{-5/4} \epsilon^{5/8}) \leq \P(\bar \epsilon > \epsilon^{5/6}) \leq \epsilon^{1/6} + C\epsilon^{1/6}.
			\]
			which shows that $\P(\mathcal B_1) \rightarrow 1$ as $\epsilon \downarrow 0$.

			Now we bound $\P(\mathcal B^c_2)$. Firstly we use the bound $\bar \epsilon \geq \epsilon$ and so we are left to bound $N^0$ from above. Using the stick breaking construction of Poisson--Dirichlet random variables (see for example \cite[Definition 1.4]{berestyckibook}) one can show that
			\[
				\P\left( \#\left\{ i\in \N: Z_i(0) > \epsilon \right\}>\epsilon^{-1/4} \right) \leq C' \epsilon
			\]
			for some constant $C'>0$. On the other hand we have that
			\[
				\#\left\{i \in \N: Y_i(0)>\epsilon  \right\} \leq M
			\]
			and hence using Lemma \ref{lemma:schramm_key} we obtain
			\[
				\P\left( \#\left\{ i\in \N: Y_i(0) > \epsilon \right\}>\epsilon^{-1/4} \right) \leq C'' \epsilon^{1/5}
			\]
			for some constant $C''>0$. Hence it follows that $\P(\mathcal B^c_2) \leq C'\epsilon + C'' \epsilon^{1/5}$ and the result now follows.
		\end{proof}
Theorem \ref{thm:schramm_repeat} now follows from Lemma \ref{lemma:couple_prime}, Lemma \ref{lemma:schramm_34} and Lemma \ref{lemma:schramm_condition_check}.

	\end{document}